\newtheorem{theorem}{Theorem}
\newtheorem{lemma}{Lemma}
\newtheorem{proposition}{Proposition}
\theoremstyle{definition}
\newtheorem{definition}{Definition}
\newtheorem{remark}{Remark}
\numberwithin{equation}{section}
\newenvironment{manuallemma}[1]{%
  \manuallemmainner
}{\endmanuallemmainner}
\newenvironment{manualtheorem}[1]{%
  \manualtheoreminner
}{\endmanualtheoreminner}
\begin{document}


\title{On almost sure limit theorems for heavy-tailed products of long-range dependent linear processes\footnote{This research has been conducted at the University of Alberta. Michael A. Kouritzin has been supported by a NSERC Discovery Grant, and Sounak Paul by a UAlberta Graduate Recruitment Scholarship, Pundit RD Sharma Memorial Scholarship, and Josephine M. Mitchell Scholarship.}}



	
	

\author{Michael A. Kouritzin \\
    Department of Mathematical and Statistical Sciences\\
	University of Alberta\\
	\and 
	Sounak Paul \\
	Department of Statistics\\
	University of Chicago\\
	}

\date{\vspace{-5ex}}

\maketitle


\begin{abstract}
Marcinkiewicz strong law of large numbers, ${n^{-\frac1p}}\sum_{k=1}^{n} (d_{k}- d)\rightarrow 0\ $ almost surely with $p\in(1,2)$, are developed for products $d_k=\prod_{r=1}^s x_k^{(r)}$, 
where the $x_k^{(r)} = \sum_{l=-\infty}^{\infty}c_{k-l}^{(r)}\xi_l^{(r)}$ are two-sided 
linear process with coefficients $\{c_l^{(r)}\}_{l\in \mathbb{Z}}$ and i.i.d. 
zero-mean innovations $\{\xi_l^{(r)}\}_{l\in \mathbb{Z}}$. 
The decay of the coefficients $c_l^{(r)}$ as $|l|\to\infty$, can be slow enough for $\{x_k^{(r)}\}$ to have long memory while $\{d_k\}$ can have heavy tails. 
The long-range dependence and heavy tails for $\{d_k\}$ are handled simultaneously and a decoupling property shows the convergence rate is dictated by the worst of long-range dependence and heavy tails, but not their combination.
The Marcinkiewicz strong law of large numbers is also extended to the multivariate linear process case.
\end{abstract}

\noindent
\textbf{MSC:} Primary, 62G32, 62M10; Secondary, 60F15\\
\textbf{Keywords:} Limit Theorems, Long-range dependence, Heavy tails, Marcinkiewicz strong law of large numbers

\tableofcontents

\section{Introduction}\label{chap1}
\subsection{Background}\label{bkgrnd}
\noindent
With today’s internet of things, big data has become abundant and huge opportunities await those who can effectively mine it. 
However, this data, especially in finance, econometrics, networks, machine learning, signal processing, and environmental science, often posseses heavy-tails and long memory (see \cite{relevance,runoff,patagonia,stockcrash}). 
Data exhibiting this combination of heavy-tails (HT) and long-range dependence (LRD) can often be modeled by linear processes but is lethal for most classical statistics. 
Recently, certain covariance estimators and stochastic approximation algorithms have been shown capable of handling this kind of data. 
In particular, Marcinkiewicz strong laws of large numbers (MSLLN) were established for showing polynomial rates of convergence (see \cite{KOSA,louhichi,Rio}).
The point of this paper is that, if one establishes MSLLNs for finite products of a data
stream, then the implied polynomial rates can be used to quantify the amount (if any) of
LRD and HT the data stream exhibits.

The tails of HT distributions are not exponentially bounded and estimating the tail decay is a common problem. Useful subclasses of HT distributions include subexponential distributions (which possess a stronger regularity condition on their tails, and were studied in \cite{chistyakov, Teugels}), and Lévy $\alpha$-stable distributions (with $\alpha < 2$), whose significance lie in generalizing the central limit theorem. For HT random variables the normalized cumulative-sum distributional limit is often a non-normal stable distribution, referred to by Mandelbrot \cite{Mand60,Mand61} as \textit{stable Paretian distribution}. Several stable distributions, such as Pareto, Lévy, and Weibull, are used in financial models. Heavy-tailed stochastic processes and their extreme value theory, have historically been a vibrant field of study (see Kulik and Soulier \cite{kuliksoulier}). In comparison to HT, LRD is a phenomenon that came to prominence more recently. Indication of long memory in environmental and hydrological time series drew a lot of attention in the mid-twentieth century, especially in fluid flow models  (see \cite{Hurst,otherhurst,noahjoseph}). Today, the LRD-HT combination frequently appears in fluid flow (see \cite{runoff,patagonia}), network traffic (see \cite{relevance,internet}), finance and stock markets (see \cite{stockcrash,mingliu}), particularly in \emph{stock market volatility} financial models.


A detailed history of LRD and HT can be found in \cite{history}. Hosking \cite{Hosking} laid the foundation for the class of ARFIMA (Autoregressive fractionally integrated moving average) models, which are now often used to simulate this combination. HT along with LRD also influence the amount of self-similarity (see Pipiras and Taqqu \cite{taqqubook}), a property which forms the basis for fractals, observed in time series. Autocovariance estimation under LRD and HT is also a field of great importance, owing to the widespread use of autocovariance functions (see \cite{Davisresnick,crosscov,Wudepinnov}). Limit theorems for sample covariances of linear processes with i.i.d innovations having regularly varying tail probabilities, was studied in \cite{Davisresnick}. Kouritzin \cite{crosscov} studied strong Gaussian approximations for cross-covariances of causal linear processes with finite fourth moments, and independent innovations. Wu et al. \cite{covlongmem}, Wu and Min \cite{Wudepinnov} studied the asymptotic behavior of sample covariances of linear processes with weakly dependent innovations, and provided both central and non-central limit theorem for the same.

Very few MSLLN results have been explored for the combination of LRD and HT data. Louhichi and Soulier \cite{louhichi} gave a MSLLN for linear processes where the innovations  are linear symmetric $\alpha$-stable processes, and with coefficients $\{c_i\}_{i\in\mathbb{Z}}$ satisfying $\sum_{i=-\infty}^{\infty}|c_i|^s<\infty$ for some $1\leq s<\alpha$. Rio \cite{Rio} explored MSLLN results for a strongly mixing sequence $\{X_n\}_{n\in\mathbb{Z}}$ assuming conditions on the mixing rate function and the quantile function of $|X_0|$. Dang and Istas \cite{dangistas} obtained consistent estimators for both the Hurst as well as stability indices of $H$-self-similar $\alpha$-stable processes. Kouritzin and Sadeghi \cite{KOSA} gave a MSLLN for the outer product of two-sided linear processes exhibiting both long memory and heavy tails, and found that the rate of convergence differed from that of linear process alone. This led us to believe that MSLLN for higher products might have different rates, and quantifying their rates of convergence could lead to interesting applications like devising simple tests to indicate presence of LRD and HT in data. Indeed, by applying Proposition \ref{equalinnov} of our paper, with different powers, and observing where convergence and divergence takes place, one could get an indication of the range of LRD and HT present in the dataset. This is a potential area for further investigation. Generalizing \cite[Theorem~3]{KOSA} from outer to arbitrary products will be the main goal of this paper. More motivation and explanation of challenges faced, is provided in Section \ref{motiv}. We refer the reader to \cite{kosadecoup} for possible further applications of our results to stochastic approximation and observer design.

\subsection{Notation and Definitions}\label{list}
\noindent
The following notation and conventions will be used throughout the paper.\\
$\bullet\ \ \|A\|_F$ is the Frobenius norm of $A$, i.e. $\sqrt{trace(A^TA)}$ for any matrix $A\in\mathbb R^{m\times n}$, where $m,n \in \mathbb{N}$.\\
$\bullet\ \ \|X\|_p=\left[E\left(X^p\right)\right]^\frac{1}{p}$ for any non-negative random variable $X$, and $p>0$.\\
$\bullet\ $ For vectors $v^{(r)}\in\mathbb R^d,\ 1\leq r\leq n$, $d\in\mathbb{N}$, we define their tensor product $\substack{n\\\bigotimes\\r=1}v^{(r)}\ \in\ \mathbb{R}^{d^n}$ element-wise, as
$$\left(\substack{n\\\bigotimes\\r=1}v^{(r)}\right)_{i_1 i_2\hdots i_n}\ =\ \prod_{r=1}^{n}v^{(r)}_{i_r},\qquad 1\leq i_j\leq d,\ \forall\ 1\leq j\leq n.$$
$\bullet\ \ a_{i,k}\stackrel{i}\ll b_{i,k}$ means that for each $k,\ \ \exists\ c_k>0$ that does not depend upon $i$ such that $|a_{i,k}|\le c_k |b_{i,k}|$ for all $i,k$ (also used in \cite{interrelation,KOSA}).\\
$\bullet\ \ l_{n,\beta}\left(x\right)\ \ 
=\ \ \left\{ \begin{array}{ll}
x^{n(1-2\beta)+1},&\beta <\frac{n+1}{2n}\\
\log(x+1),&\beta =\frac{n+1}{2n}\\
1,&\beta >\frac{n+1}{2n}
\end{array}\right.$,$\quad\ \forall\ n\in \mathbb{N} \mbox{ and } \beta \in \mathbb{R}$.\\
$\bullet\ \ l_i$ shall denote the $i$th coordinate of the vector $\boldsymbol\ell\in\mathbb{Z}^q$, for $q\in\mathbb{N},\ 1\leq i\leq q$. In other words, $\boldsymbol\ell\ =\ \left(l_1,l_2,\hdots,l_q\right)$.\\
$\bullet\ \ \mathcal P_s$ denotes the collection of permutations of $\{1,2,\hdots,s\}$.\\ 
$\bullet\ $ If $\{f_r\}_{r\in\mathbb{Z}}$ is a sequence of functions or constants, and $a,b\in\mathbb{N}\cup\{0\}$ such that $a>b$, then $\prod_{r=a}^{b}f_r=1$.\\
$\bullet\ $ If $x\geq 0$ and $a>0$, then at the point $x = 0$, $a\wedge\frac{1}{x}=\lim_{x\rightarrow 0^+} a\wedge \frac{1}{x}$.\\

Our standard notation includes: $|x|$ is Euclidean norm of $x\in \mathbb R^d$, $\ \textbf{1}_A$ is the indicator function of the event A, $|S|$ is the cardinality of the set $S$, $a\vee b=\max\{a,b\}$, $a\vee b\vee c=\max\{a,b,c\}$, $\ a\wedge b=\min\{a,b\}$, $\ a\wedge b\wedge c=\min\{a,b,c\}$, $\ a\vee b\wedge c=(a\vee b)\wedge c$, $\lfloor c\rfloor$ and $\lceil c\rceil$ are the greatest and least integer functions of $c\in\mathbb{R}$ respectively.

Now, we formally define the basic concepts that will be used throughout the paper. 
The Marcinkiewicz strong law of large numbers is defined in Appendix \ref{appclassic}. 
We use the following weak HT definition, also used in \cite{KOSA}, that basically says that the tails decay like $x^{-\beta}$ for some real number $\beta$.
\begin{definition}[Heavy tails]\label{htdef}
	A random variable $X$ is said to be heavy-tailed, if
	\begin{eqnarray}\nonumber
	\beta\ =\ \sup\ \left\{q\geq 0:\ \sup_{x\geq 0}\ x^q P\left(|X| > x \right) < \infty\right\}\ \ <\ \infty\ ,
	\end{eqnarray}
	and $\beta$ will be called the heavy-tail coefficient of $X$.
\end{definition}
Notice $\beta > p$ implies that $E[|X|^p]<\infty$ and the classical MSLLN in Theorem \ref{msllndef} of Appendix \ref{appclassic} holds. 
The smaller the value of $\beta$, the heavier the tail of $X$.  

Five non-equivalent LRD conditions are provided and compared in \cite[Chapter~2]{taqqubook}, that could be used as a definition of LRD. Since we only treat time series with linear representations, their first condition is most natural to us. Still, we shall use a more general, two-sided version of their first condition as our definition of LRD. We first provide the definition of slowly varying sequence.
\begin{definition}[Slowly varying sequence]\label{slowvary}
    A sequence $\{L(n)\}_{n\in\mathbb{N}}$ is said to be slowly varying if it is positive for $n\geq n_0$ for some $n_0\in\mathbb{N}$, and
    $$\lim_{n\rightarrow\infty}\frac{L(\lfloor an\rfloor)}{L(n)}=1,\quad \forall\ a>0\ .$$
\end{definition}
\begin{definition}[Long-range dependence]\label{lrddef}
	The time series $X=\{X_n\}_{n\in\mathbb{Z}}$, with linear representation
	$$X_n = \mu\ +\ \sum_{l=-\infty}^{\infty}c_{n-l}\xi_{l},$$
	where $\mu\in\mathbb{R}$, and $\{\xi_l\}_{l\in\mathbb{Z}}$ are uncorrelated random variables with zero mean and common variance, is \textit{long-range dependent} if $\{c_l\}_{l\in\mathbb{Z}}$ are real coefficients satisfying
	\begin{eqnarray}\nonumber
	|l|^\sigma c_l = \begin{cases}
      L_1(l) & \text{if}\ l\in \{1,2,3,\hdots\},\\
      L_2(-l) & \text{if}\ l\in \{-1,-2,-3,\hdots\},
    \end{cases}
	\end{eqnarray}
	for some $\sigma\in\left(\frac12,1\right)$, and some slowly varying sequences $L_1$ and $L_2$.
	A smaller $\sigma$ indicates longer range dependence and $\sigma\ge 1$ indicates no long-range dependence. 
\end{definition}
According to \cite{taqqubook}, Definition \ref{lrddef} implies that the autocovariance function of the LRD time series $X$, i.e. $\gamma_X(k)=E[X_0X_k]\ $, will be equal to $k^{1-2\sigma}\overline{L}(k)$, where $\overline{L}$ is another slowly varying sequence, and that these autocovariances are not absolutely summable.

\noindent
\textbf{Note:} Herein, since we are only considering linear processes, we further assume that the innovations $\{\xi_l\}_{l\in\mathbb{Z}}$ are i.i.d. random variables.

\section{Motivation and Results}\label{motiv}
\noindent
In this section, we introduce arbitrary products and powers of $\mathbb{R}$-valued linear processes, for which we will establish MSLLN.
We also motivate the conditions required to establish these results.
Finally, at the end of the section we give a multivariate generalization.

\noindent
{\bf General $\mathbb{R}$-valued product case}:
Let $s\in\mathbb N$ and $\ \left\{\left(x_k^{(1)},x_k^{(2)},\hdots,x_k^{(s)}\right)\right\}_ {k\in\mathbb{Z}}\ $ be $\mathbb{R}^{s}$-valued random vectors, with
\begin{eqnarray}\label{linprocess}
x_k^{(r)} = \sum_{l=-\infty}^{\infty} c_{k-l}^{(r)}\xi_{l}^{(r)},\qquad\ \ \forall\ 1\leq r\leq s,
\end{eqnarray}
being two-sided linear processes in terms of $\mathbb{R}^{s}$-valued i.i.d. innovation vectors $\left\{\left(\xi_l^{(1)},\xi_l^{(2)},\hdots,\xi_l^{(s)}\right)\right\}_{l\in \mathbb{Z}}\ $ with zero-mean and finite variance, and coefficients $\left\{\left(c_l^{(1)},c_l^{(2)},\hdots,c_l^{(s)}\right)\right\}_{l\in \mathbb{Z}}$ satisfying some decay condition (see below). The finite variance assumption, along with the conditions (Reg, Tail, Decay) that we introduce later,  ensures the almost sure convergence of (\ref{linprocess}). Notice that we are not assuming any dependence structure among the variables $\xi_l^{(1)},\xi_l^{(2)},\hdots,\xi_l^{(s)}$ for any fixed $l$. The coefficients $\{c_l^{(i)}\}_{l\in\mathbb{Z}}$ may decay slowly enough that $\{x_k^{(i)}\}_{l\in\mathbb{Z}}$ has LRD, for any (or all) $i\in\{1,...,s\}$. Define, 
\begin{equation}\label{dkdef}
d_k= \prod_{r=1}^{s} x_k^{(r)},\ \quad \text{i.e. } d_k=(x_k)^s\ \text{ when }x_k^{(r)}=x_k,\
\forall\ r,
\end{equation} 
and observe that $d_k$ can possess heavy tails in this setting.

\noindent
{\bf $\mathbb{R}$-valued power case}:
This is a special case of the general $\mathbb{R}$-valued product, which is easier to follow. In this case, we still have $s\in\mathbb N$, but $\xi_{l}^{(r)}=\xi_{l}$, $c_l^{(r)}=c_l$ for $r\in\{1,...,s\}$, $l\in\mathbb Z$ so 
\begin{eqnarray}\label{linprocesssimp}
x_k^{(r)} =x_k= \sum_{l=-\infty}^{\infty} c_{k-l}\xi_{l},\qquad\ \ \forall\ 1\leq r\leq s.
\end{eqnarray} 
We impose the following conditions for this case:
\begin{itemize}
\item[(reg]$\!\!\!$)\quad\ 
$E \left[\left|\xi_1\right|^{ 2}\right]<\infty,$
\item[(tail]$\!\!\!$)\quad\ 
$\sup_{t\geq 1}\ t^{\alpha}P\left(|\xi_1|^s > t\right) < \infty\ ,$
for some $\ \alpha > 1$,
\item[(dec]$\!\!\!$ay)
$\sup_{l\in\mathbb Z}\ |l|^{\sigma} \left|c_l\right|<\infty \qquad \mbox{for some } \sigma \in \left(\frac12,1\right].$
\end{itemize}
These conditions allow longer-range dependence for smaller $\sigma$ and 
 heavy-enough tails when $s\ge 2$ and
$\alpha\in(1,2)$ that the second moment of $d_k$ will not exist.
Since there is no slowly varying function in (decay), $\sigma=1$ handles the non-long-range dependence case.

To further motivate Theorem \ref{mostgen} (to follow), we first state the following proposition, which is set in the power case. Notice that $E\left[\left|\xi_1\right|^{s}\right]<\infty$ by (tail) so Condition (Reg) below for Theorem \ref{mostgen} holds.
\begin{proposition}\label{equalinnov}
Assume Conditions (reg), (tail) and (decay) hold, and $x_k$ is defined as in (\ref{linprocesssimp}).
Then, $\ \lim\limits_{n\rightarrow \infty}{n^{-\frac{1}{p}}}\sum\limits_{k=1}^{n}\left((x_k)^s - E\left[(x_k)^s\right]\right)=0\ \ \mbox{a.s,}\ $ for all
\begin{eqnarray}\label{ccond4}
0\ \ <\	\ p\ \ <\ \ \left\{ \begin{array}{ll} \frac{2}{3-2\sigma}, & s=1 \\ 2 \wedge \alpha \wedge \frac{1}{2-2\sigma}, & s=2 \\ \alpha \wedge \frac{2}{3 - 2\sigma}, & s>2 \end{array}\right.\ .
\end{eqnarray}
Furthermore, if $\xi_1$ is a symmetric random variable, and s is even, then the constraint for (\ref{ccond4}) can be relaxed to $0\ <\ p\ <\ 2 \wedge \alpha \wedge \frac{1}{2 - 2\sigma}\ .$
\end{proposition}

\begin{proof}
	The proof of this proposition follows directly from Theorem \ref{mostgen}, with $\xi_{l}^{(r)}=\xi_{l}$, and $c_l^{(r)}=c_l$ for $r\in\{1,...,s\}$.
\end{proof}

\begin{remark}
Due to the power case condition (reg), there cannot be HT influence when $s=1$. Further, if ($\sigma=1$ and $s=1$) or ($s\geq 2$, $\alpha\ge 2$ and $\sigma\ge 1$), then there is neither HT nor LRD and $p$ in (\ref{ccond4}) can be anything less than $2$, which is consistent with classical MSLLN (see Theorem \ref{msllndef}). Note when $s=2$ and $\sigma = 1$, we have $2 \wedge \alpha \wedge \frac{1}{2-2\sigma} = 2 \wedge \alpha$ by the last convention in Subsection \ref{list}.
\end{remark}

\subsection{Main Results}\label{chap2}
\noindent
Our first main result generalizes Proposition \ref{equalinnov}
from powers to products.
For products, the regularity, tail and decay conditions become:
\begin{itemize}
\item[(Reg]$\!\!\!$)
$E \left[\left|\xi_1^{(r)}\right|^{s\vee 2}\right]<\infty\qquad\forall\ 1\leq r\leq s,$
\item[(Tail]$\!\!\!$)
$\max_{\pi\in\mathcal P_s}\ \max_{0 \leq i \leq \left\lfloor \frac{s-1}{2} \right\rfloor}\ \sup_{t\geq 1}\ t^{\alpha_i}P\left(\prod_{r\in\{\pi(1),\hdots,\pi(s-i)\}}\!\!\left|\xi_1^{(r)}\right| > t\right) < \infty\ ,$
for some $\ \alpha_0 > 1,\ \alpha_i =\frac{s}{s-i}\alpha_0\ \mbox{ for }\ i\in\left\{1,2,\hdots,\left\lfloor \frac{s-1}{2} \right\rfloor\right\}$,
\item[(Dec]$\!\!\!$ay)  
$\sup_{l\in\mathbb Z}\ |l|^{\sigma_r} \left|c_l^{(r)}\right|<\infty \qquad \mbox{for some } \sigma_r \in \left(\frac12,1\right],\ \ \forall\ 1\leq r\leq s.$
\end{itemize}

(Reg) ensures existence of the linear process
product and its mean (see the Khinchin-Kolmogorov Theorem in e.g. Shiryaev \cite[Chapter~4, Section~2, Theorem~2]{Shiryaev} or else \cite[Theorem~1.4.1]{Samorod}).

\begin{remark}
$\sigma_r\in \left(\frac{1}{2},1\right)$ allows for the presence of long memory in $x_k^{(r)}$ (see Definition \ref{lrddef}). (Tail) does not necessarily imply the $s$ moment in (Reg) since we do not assume any particular dependence in $r\rightarrow \xi^{(r)}_1$.
For example, if $s=3$, then $\left\lfloor \frac{s-1}{2} \right\rfloor=1$ and we just need $\alpha_1>\frac32$ and (Tail) would imply a moment greater than $\frac32$ on any product $\xi^{(r_1)}_1\xi^{(r_2)}_1$ for $r_1\ne r_2$ but $\xi^{(r_1)}_1$ and $\xi^{(r_2)}_1$ could be independent so this does not imply a third moment on either.  Similarly, $\alpha_0>1$ would only necessarily guarantee more than a first moment.
\end{remark}

\begin{remark}
The products of the linear processes produce sums of products of innovations $\xi_{i_1}^{(1)}\xi_{i_2}^{(2)}\cdots\xi_{i_s}^{(s)}$, where any number of the $i_j$'s may be equal.
$\alpha_i$ in (Tail) is used to control the amount of HT present in terms with $s-i$ innovations having same subscripts. Clearly $\alpha_i$ must get larger with increasing $i$, since the product of fewer innovation at the same time produces lighter HT.
Indeed, in the case where all $\xi^{(r)}_i=\xi_i$ are the same (as for our earlier power Proposition \ref{equalinnov}) (Tail) collapses down to (tail) due to our assignment $\alpha_i=\frac{s}{s-i}\alpha_0$. This assignment is motivated by the case when $\xi_{1}^{(1)}=\hdots=\xi_{1}^{(s)}=\xi_{1}$, where the tail condition $\ \sup_{t\geq 0}\ t^{\alpha_0} P\left(\left|\xi_1\right|^s > t \right) < \infty\ $ implies that $\ \sup_{t\geq 0}\ t^{\frac{s}{s-i}\alpha_0} P\left(\left|\xi_1\right|^{s-i} > t\right) < \infty\ $.
\end{remark}

\begin{theorem}\label{mostgen}
Assume Conditions (Reg), (Tail) and (Decay) hold, $d_k$ is defined as in (\ref{dkdef}), and $d=E[d_1]$.
Then, $\ \lim\limits_{n\rightarrow \infty}{n^{-\frac{1}{p}}}\sum\limits_{k=1}^{n}\left(d_{k}-d\right)=0\ \ \mbox{a.s.}\ \ $for
	\begin{eqnarray}\label{cond4}
	0\ \ <\ \ p\ \ <\ \ \left\{ \begin{array}{ll} \frac{2}{3-2\sigma_1}, & s=1 \\ 2 \wedge \alpha_0 \wedge \frac{1}{2 -\sigma_1 -\sigma_2}, & s=2 \\ \alpha_0 \wedge \frac{2}{3 - 2\min_{1\leq i \leq s}\{\sigma_i\}}, & s>2 \end{array}\right.\ .
	\end{eqnarray}
	Furthermore, if $\ \xi_1^{(1)}=\xi_1^{(2)}=\hdots=\xi_1^{(s)}$, $\xi_1^{(1)}$ is a symmetric random variable, and $s$ is even then the constraint in (\ref{cond4}) can be relaxed to
	\begin{eqnarray}\label{cond5}
	0\ \ <\ \ p\ \ <\ \ 2 \wedge \alpha_0 \wedge \frac{1}{2 - \min_{1\leq i<j\leq s}\{\sigma_i+\sigma_j\}}\ .
	\end{eqnarray}
	
	\vspace{0.3cm}
\end{theorem}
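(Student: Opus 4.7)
The plan is to reduce the almost-sure statement to a moment estimate for the partial sums by a standard Marcinkiewicz--Zygmund / Borel--Cantelli argument (a Kronecker-lemma reduction, or a maximal inequality along dyadic blocks $n_j = 2^j$ with interpolation between blocks). Because of the heavy tails, I would first truncate each innovation at a level $\tau_n = n^{\gamma}$, with $\gamma$ to be tuned; condition (Tail), stated uniformly over all permutations $\pi \in \Pi_s$ and all truncation depths $i \le \lfloor (s-1)/2 \rfloor$, guarantees that the untruncated-innovation remainder is summably small in probability and can be absorbed. The remaining task is to bound $n^{-q/p} E\bigl|\sum_{k=1}^n (\tilde d_k - E \tilde d_k)\bigr|^q$ for the truncated version $\tilde d_k$, for some $q$ chosen in $[1,2]$ depending on the regime, so that Markov's inequality plus Borel--Cantelli yields the claim.

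The moment bound is obtained by expanding
\begin{equation*}
\tilde d_k \;=\; \sum_{\boldsymbol\ell \in \mathbb{Z}^s} \Bigl(\prod_{r=1}^s c_{k-l_r}^{(r)}\Bigr)\Bigl(\prod_{r=1}^s \tilde\xi_{l_r}^{(r)}\Bigr),
\end{equation*}
and grouping the multi-indices $\boldsymbol\ell$ by their coincidence partition (i.e.\ which of the coordinates $l_1,\dots,l_s$ are equal). The fully-diagonal class (all $l_r$ equal) carries the mean $d$ together with a zero-mean remainder controlled by the slowly-varying-autocovariance asymptotic coming from (Decay). Every non-diagonal class contains at least one index $l_r$ whose truncated innovation is independent of the rest and has mean zero, so the class contributes a centered, orthogonal martingale-like piece to which a Rosenthal or von Bahr--Esseen inequality in the $\boldsymbol\ell$-variables applies. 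Summing the (Decay) bounds over $k$ produces partial-sum estimates in the coefficient tensor $\prod_r c_{k-l_r}^{(r)}$ whose order depends on how many $\sigma_r$ are coupled inside the class; these are precisely the regimes packaged in the function $l_{n,\beta}$ defined in the Notation section. The trichotomy $s=1$, $s=2$, $s>2$ in (\ref{cond4}) reflects which partition class turns out to be dominant: for $s>2$ an intermediate class (coupling only the worst coordinate with its neighbours) is largest, so only $\min_i \sigma_i$ enters the rate.

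The principal obstacle is the decoupling claim: the contribution of long-range dependence (through the $\sigma_r$) and of heavy tails (through $\alpha_0$) must be shown to enter the exponent as a minimum rather than multiplicatively. This forces a careful joint optimisation of the truncation level $\gamma$ with the class-by-class moment bound, so that the truncation error and the centered moment are simultaneously acceptable and neither part of (Tail) nor (Decay) is wasted. For the refinement (\ref{cond5}) in the symmetric, even-$s$ case, I would exploit that every odd moment of the common symmetric innovation vanishes: this annihilates every partition class in which some cluster of equal indices has odd size. The surviving partitions are built from pairs of equal indices, and the worst such pairing couples the two smallest $\sigma_i$ and $\sigma_j$, which produces the constraint on $\min_{i<j}(\sigma_i+\sigma_j)$ in place of $\min_i \sigma_i$ and yields (\ref{cond5}).
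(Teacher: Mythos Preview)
Your overall architecture matches the paper's: decompose $\sum_k(d_k-d)$ by the coincidence partition of the innovation indices, truncate the heavy-tailed part, control the truncated piece by a second-moment bound along dyadic blocks plus a maximal inequality, control the error by a $\tau$-th moment bound with $\tau<\alpha_i$, and finish with Borel--Cantelli. But two of your key steps are wrong as stated. First, the assertion that ``every non-diagonal class contains at least one index $l_r$ whose innovation is independent of the rest and has mean zero'' is false: for $s=4$ and the partition $(2,2)$ the factors are $\xi_{l_1}^{2}$ and $\xi_{l_2}^{2}$, neither of mean zero, so there is no martingale structure to feed into Rosenthal or von Bahr--Esseen. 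The paper does not attempt this; it bounds $E[(S_n(q,\lambda_q)-S_o(q,\lambda_q))^2]$ by expanding the covariance over pairs $(\boldsymbol\ell,\boldsymbol m)$ and classifying them by a \emph{matching function} (Definitions~\ref{matching}--\ref{sets}, Lemmas~\ref{covlemma}--\ref{psilemma}). The bound is governed by how many singleton indices ($a_r=1$, hence genuinely mean-zero) in $\boldsymbol\ell$ are matched with indices in $\boldsymbol m$; this count $|V_2|$ is what produces the trichotomy $(n-o)$, $(n-o)l_{1,\sigma}(n-o)$, $(n-o)l_{s,\sigma}(n-o)$, and hence the three cases in~(\ref{cond4}). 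Also, the paper truncates only the single heavy factor $\xi_l^{s-i}$ (after isolating partitions with $a_1=s-i>s/2$), not every innovation; condition (Tail) is stated for products and is applied directly to this power.

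Second, your mechanism for the symmetric even-$s$ refinement is incorrect. Odd-size clusters do \emph{not} annihilate their partition class: for $s=4$ the class $(3,1)$ survives, since $\xi_{l_1}^{3}\xi_{l_2}$ is not identically zero. What symmetry kills is a specific pairing inside the covariance expansion: when only one singleton is matched ($|V_2|=1$), the residual factor $\prod_{r=1}^{q-1}E(\xi^{a_r})$ involves exponents summing to the odd number $s-1$, so some $a_r$ is odd and that moment vanishes. This forces $|V_2|\ge 2$, improving the worst partition bound from $(n-o)l_{1,\sigma}(n-o)$ to $(n-o)l_{2,\sigma}(n-o)$ (the ``Extra Case'' of Lemma~\ref{psilemma}), which is exactly what converts $\frac{2}{3-2\sigma}$ into $\frac{1}{2-2\sigma}$ in~(\ref{cond5}). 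Your claim that only pairings survive would, if true, give~(\ref{cond5}) for all $s$ without the even/symmetric hypothesis, which is false.
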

Our linear processes are two sided so both the past and the future must be considered. 
LRD implies absence of strong mixing and HT invalidates direct use of moments techniques.
Thus, we have used a technique to decompose products of sums into subsets based upon how they would contribute to an overall bound.
Definition \ref{sets} below, used in the proofs of Lemmas \ref{covlemma} and \ref{psilemma}, is the basis of this technique. 
This division idea is not completely new but rather related to
earlier decompositions in Bai and Taqqu \cite[Proposition~3.3]{baitaqqu} and Peccati
and Taqqu \cite[Chapter 7]{peccatitaqqu}.

\noindent
{\bf Note on optimality of rates of convergence in Theorem \ref{mostgen}}:
Ideally, Marcinkiewicz strong law of large numbers establish the best polynomial convergence rate. However, proving optimality under heavy-tails and long-range dependence conditions requires establishing central and non-central limit type results. Surgailis \cite{Surgzones,Surglrdappell,Surgemp,Surgstable} established some such results, starting in \cite{Surgzones}, where he studied limit distributions of
\begin{eqnarray}\label{sfloort}
S_{n,h}(t) = \sum_{k=1}^{{\lfloor nt\rfloor}}\left[h(x_k) - E(h(x_k))\right].
\end{eqnarray}
$\{x_k\}$ was a one-sided moving average process and $h$ a polynomial. Central and non-central limit theorems for non-linear functionals of Gaussian fields were explored in \cite{breurmajor} and \cite{dobrushinmajor} respectively. These works used the fact that the weak limit of the normalized sums $S_{n,h}(t)$ is dictated by the Hermite rank of function $h$, which was first shown by Taqqu \cite{taqqu}. Analysis of (\ref{sfloort}) for the Gaussian LRD was explored in \cite{Surglrdappell} and \cite{avramtaqqu} by replacing the Hermite rank with the Appell rank. Vai\v ciulis \cite{vaiciulis} and Surgailis \cite{Surgemp} later investigated (\ref{sfloort}) under the combination of LRD and HT, but products of linear processes were not considered. Thus to the authors' knowledge, central and non-central limit theorems for arbitrary products of two sided linear processes under both LRD and HT have not yet been established, and is a topic worthy of further research. (See also \cite{KOSA} for consideration of the case $s=2$.)

\begin{remark}\label{mikesamira}
Taking $s=2$ in Theorem \ref{mostgen} gives us \cite[Theorem~3]{KOSA} as a corollary. There is a minor miscalculation in the second-last line (Line 17) of \cite[Page~362]{KOSA}. The term $\sum_{l=j+1}^{k+T}c_{j-l}c_{k-l}$ in Line 16 was erroneously taken to be smaller than $\ (j-k)^{-2\sigma}T^{2-2\sigma}$ instead of $\ (j-k)^{1-2\sigma}$. This miscalculation can be corrected by applying Lemma \ref{inequality} (with $\gamma=\sigma$) in Appendix \ref{apptech} of our paper, to Line 15 of \cite{KOSA}, to obtain their results. Also, Kouritzin and Sadeghi \cite[Remark~2]{KOSA} mention that the constraints for handling LRD and those for HT \emph{decouple}, which they explain through the structure of the terms $d_k$. This decoupling phenomenon is observed in our proof as well.
\end{remark}

\begin{remark}\label{sigmasless1}
	Since $\sigma_r \in (\frac12,1],\ \alpha_i \in (1, \infty)$, there exists $\epsilon, \overline\epsilon > 0$ such that $\sigma_r - \epsilon \in \left(\frac12,1\right)$ and $\alpha_i - \overline\epsilon\in (1,2)\cup (2,\infty)$. It can be checked that (Tail, Decay) also hold for $\alpha_i - \overline\epsilon \mbox{ and }\sigma_r - \epsilon$ instead of $\alpha_i \mbox{ and }\sigma_r$ respectively. Thus, by a limit argument, it suffices to assume that $\sigma_r \in (\frac12,1),$ and $\alpha_i \in (1,2)\cup (2,\infty)$. Also, (Decay) implies that $\left|c_l^{(r)}\right| \stackrel{l}\ll \left\{ \begin{array}{ll} 1 & l=0\\|l|^{-\sigma_r}& l\neq 0\end{array}\right.$. The proof of Theorem \ref{mostgen} only differs cosmetically from the notationally simpler case where $\xi_l^{(1)} = \hdots = \xi_l^{(s)} = \xi_l$, and $\ \sigma_1 = \hdots = \sigma_s = \sigma$, hence we can further assume that $c_l^{(1)} = \hdots = c_l^{(s)} = c_l$. Throughout the paper, we only prove this later case, and provide Remark \ref{difffromgen} concerning the notational changes that would have to be made to prove the case where the innovations and LRD coefficients are allowed to be unequal.
\end{remark}
\begin{remark}\label{htpreclude}
	The following calculation will illustrate why we consider the case $\alpha_0>2$ in (Tail) to not possess heavy tails, and the case $\alpha_0\in(1,2]$ to have possible heavy tails. If $\alpha_0 > 2$, then $\alpha_i = \frac{s}{s-i}\alpha_0 > 2\ $ for $i\in\{0,1,\hdots,\lfloor\frac{s-1}{2}\rfloor\}$. 
When $\pi$ is a permutation of $\{1,2,\hdots,s\}$, we see from (Tail), that $\forall\ 0\leq i\leq \left\lfloor\frac{s-1}{2}\right\rfloor$,
	\begin{eqnarray}\nonumber\label{lighttail}
	&&E\left[\prod_{r\in\{\pi(1),\hdots,\pi(s-i)\}}\left|\xi_1^{(r)}\right|^2\right]
	\\ \nonumber
	&=& 2\int_0^{\infty}tP\left(\prod_{r\in\{\pi(1),\hdots,\pi(s-i)\}}\left|\xi_1^{(r)}\right|>t\right)\ dt
	\\
	&\ll&
	2\!\int_0^{1} 1\ dt +\ 2\!\int_1^{\infty}t^{1-\alpha_i}\ dt\ \ \ll\ \ 2 + \frac{2}{\alpha_i-2}\ \ <\ \ \infty.
	\end{eqnarray}
	We conclude that $E\left[\prod_{r=1}^{s}\!\left(\!1+\left(\xi_1^{(r)} \right)^2 \right) \right]<\infty$, which precludes heavy tails.
\end{remark}

Our second main result is a multivariate version of Theorem \ref{mostgen}. 
This theorem follows from linearity of limits and Theorem \ref{mostgen}. 
\begin{theorem}\label{multiv}
Let $s \in \mathbb{N},\ \alpha_0 > 1,\ \alpha_i =\frac{s}{s-i}\alpha_0\ \mbox{ for}\ 1 \leq i \leq \left\lfloor \frac{s-1}{2} \right\rfloor$ and\\
$\left\{ \left(\Xi_{l}^{(1)},\Xi_{l}^{(2)},\hdots,\Xi_{l}^{(s)}\right)\right\}_{l\in\mathbb{Z}}\ $ be i.i.d.\ zero-mean random matrices in $\mathbb{R}^{m\times s}$, such that $\ E \left[\left\|\Xi_1^{(r)}\right\|_F^{s\vee 2}\right]<\infty,\ \ \forall\ 1\leq r\leq s$, and
	\begin{eqnarray}\nonumber
	\max_{\pi\in\mathcal P_s}\ \max_{1 \leq i \leq \left\lfloor \frac{s-1}{2} \right\rfloor}\ \sup_{t\geq 0}\ t^{\alpha_i}P\left(\prod_{r\in\{\pi(1),\hdots,\pi(s-i)\}}\left\|\Xi_1^{(r)}\right\|_F > t\!\right) < \infty\ .
	\end{eqnarray}
	Moreover, let $\ \mathbb{R}^{d\times m}$-valued matrices $\ \left\{\left(C_{l}^{(1)},C_{l}^{(2)},\hdots,C_{l}^{(s)}\right)\right\}_{l\in\mathbb{Z}}\ $ satisfy\\
	$\sup_{l\in\mathbb Z}\ |l|^{\sigma_r} \left\|C_l^{(r)}\right\|_F<\infty\ $, for some $\sigma_r \in \left(\frac12,1\right]$.
	For$\ 1\leq r\leq s,\ k \in \mathbb Z,\ $ define $\ X_{k}^{(r)}=\sum\limits_{l=-\infty }^{\infty}C_{k-l}^{(r)} \Xi_{l}^{(r)}$. Then, $\ \lim\limits_{n\rightarrow \infty}{n^{-\frac{1}{p}}}\sum\limits_{k=1}^{n}\left(\substack{s\\\bigotimes\\{r=1}} X_k^{(r)} - E\left[\substack{s\\\bigotimes\\{r=1}} X_k^{(r)}\right]\right)=0\ \ \mbox{a.s,}\ \ $for the values of $p$ as in (\ref{cond4}).
\end{theorem}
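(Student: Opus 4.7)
The strategy is to reduce the multivariate theorem to the scalar case already handled by Theorem \ref{mostgen}. Since the tensor product $\bigotimes_{r=1}^s X_k^{(r)}$ takes values in the finite-dimensional space $\mathbb{R}^{d^s}$, coordinate-wise convergence is equivalent to convergence of the whole tensor, and any finite linear combination of sequences converging almost surely to zero still converges almost surely to zero. Hence it suffices to prove the claimed MSLLN for an arbitrary fixed scalar entry $\prod_{r=1}^s (X_k^{(r)})_{i_r}$ of the tensor product.

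Fix indices $(i_1,\ldots,i_s) \in \{1,\ldots,d\}^s$. Writing out the matrix--vector multiplication in the definition of $X_k^{(r)}$, each factor expands as
\[
(X_k^{(r)})_{i_r} \;=\; \sum_{j=1}^m \sum_{l \in \mathbb{Z}} (C_{k-l}^{(r)})_{i_r,j} \, (\Xi_l^{(r)})_j ,
\]
so by distributing the product across the finitely many inner indices one obtains
\[
\prod_{r=1}^s (X_k^{(r)})_{i_r} \;=\; \sum_{(j_1,\ldots,j_s) \in \{1,\ldots,m\}^s} \prod_{r=1}^s x_k^{(r;j_r)},
\]
where $x_k^{(r;j)} := \sum_{l\in\mathbb{Z}} (C_{k-l}^{(r)})_{i_r,j} \, (\Xi_l^{(r)})_j$ is a scalar two-sided linear process. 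Because the outer sum is finite, a second appeal to linearity reduces the problem to establishing the MSLLN for $\prod_{r=1}^s x_k^{(r;j_r)}$ for each fixed tuple $(j_1,\ldots,j_s)$.

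For such a fixed tuple, I would verify that conditions (Reg), (Tail) and (Decay) of Theorem \ref{mostgen} hold for the scalar innovations $\xi_l^{(r)} := (\Xi_l^{(r)})_{j_r}$ and coefficients $c_l^{(r)} := (C_l^{(r)})_{i_r,j_r}$. The zero-mean property is inherited from $E[\Xi_1^{(r)}]=0$, and the i.i.d.\ structure across $l$ of the innovation vectors passes to any fixed collection of their components. The elementary entrywise bounds
\[
|(\Xi_1^{(r)})_{j}| \le \|\Xi_1^{(r)}\|_F \qquad\text{and}\qquad |(C_l^{(r)})_{i,j}| \le \|C_l^{(r)}\|_F
\]
then transfer the moment, product-tail, and polynomial-decay hypotheses to the scalars with exactly the same exponents $\alpha_i$ and $\sigma_r$ as in the multivariate assumptions, with no loss. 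Applying Theorem \ref{mostgen} yields the desired MSLLN for each $\prod_{r=1}^s x_k^{(r;j_r)}$ over the same range of $p$ given in (\ref{cond4}); summing the finitely many resulting almost-sure limits and reassembling coordinates delivers the theorem. There is no substantial analytic obstacle beyond bookkeeping with the Frobenius-norm inequalities; all the genuine difficulty is already encoded in Theorem \ref{mostgen}.
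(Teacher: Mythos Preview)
Your proposal is correct and is precisely the approach the paper has in mind: the paper states only that Theorem~\ref{multiv} ``follows from linearity of limits and Theorem~\ref{mostgen}'' and gives no further argument, and your reduction---fixing an entry of the tensor, expanding each matrix--vector product over the $m$ inner indices, and invoking the Frobenius-norm domination to transfer (Reg), (Tail) and (Decay) to the resulting scalar linear processes---is exactly the bookkeeping that sentence encodes.
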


We illustrate Theorem \ref{multiv} by considering the simple case, $s=d=m=2$. Thus we can express,
$$\Xi_l^{(r)} =
\begin{bmatrix}
\xi_{l,1}^{(r)}\\
\xi_{l,2}^{(r)}
\end{bmatrix},\ C_l^{(r)} =
\begin{bmatrix}
c_{l,11}^{(r)} & c_{l,12}^{(r)}\\
c_{l,21}^{(r)} & c_{l,22}^{(r)}
\end{bmatrix},\ X_k^{(r)} =
\begin{bmatrix}
x_{k,11}^{(r)} + x_{k,12}^{(r)}\\
x_{k,21}^{(r)} + x_{k,22}^{(r)}
\end{bmatrix},
$$
where, $x_{k,ij}^{(r)} = \sum\limits_{l=-\infty }^{\infty}c_{k-l,ij}^{(r)}\xi_{l,j}^{(r)}$. Since $s=2$, that gives us for all $1\leq i,j\leq 2$, that
\begin{align}\nonumber\label{fourterms}
    \left(\substack{s\\\bigotimes\\{r=1}} X_k^{(r)}\right)_{ij}\ =&\ \left(x_{k,i1}^{(1)} + x_{k,i2}^{(1)}\right)\left(x_{k,j1}^{(2)} + x_{k,j2}^{(2)}\right)\\
    =&\ x_{k,i1}^{(1)}x_{k,j1}^{(2)} + x_{k,i1}^{(1)}x_{k,j2}^{(2)} + x_{k,i2}^{(1)}x_{k,j1}^{(2)} + x_{k,i2}^{(1)}x_{k,j2}^{(2)}\ .
\end{align}

Let us consider the first term in the right hand side of (\ref{fourterms}). Using Theorem \ref{mostgen} with $s=2$ on $d_k=x_{k,i1}^{(1)}x_{k,j1}^{(2)}$, we get that
$$\lim\limits_{n\rightarrow \infty}{n^{-\frac{1}{p}}}\sum\limits_{k=1}^{n}\left(x_{k,i1}^{(1)}x_{k,j1}^{(2)} - E\left[x_{k,i1}^{(1)}x_{k,j1}^{(2)}\right]\right)=0\ \ \mbox{a.s,}\ \ $$
for the values of $p$ as in (\ref{cond4}). A similar MSLLN holds for the rest of the terms in (\ref{fourterms}) for the same values of $p$, hence by linearity of limits we get
$$\lim\limits_{n\rightarrow \infty}{n^{-\frac{1}{p}}}\sum\limits_{k=1}^{n}\left(\substack{s\\\bigotimes\\{r=1}} X_k^{(r)} - E\left[\substack{s\\\bigotimes\\{r=1}} X_k^{(r)}\right]\right)_{ij}=0\ \ \mbox{a.s.}$$
This holds for all $1\leq i,j\leq 2$, and thus see that Theorem \ref{multiv} is true in this case.

\section{Proof of Theorem \ref{mostgen}}\label{chap4}
\subsection{Light-tailed Case of Theorem \ref{mostgen}} 
\noindent
Keeping Remarks \ref{sigmasless1} and \ref{htpreclude} in mind, we first present a theorem that handles long-range dependence under the condition $\alpha_0 > 2$.
\begin{theorem}\label{longonlygen}
Let $E \left[(\xi_1)^{2s} \right]<\infty$, $d_k$ be defined as in (\ref{dkdef}), $d=E[d_1]$, and Condition (decay) hold.
Then, $\ \lim\limits_{n\rightarrow \infty}{n^{-\frac{1}{p}}}\sum\limits_{k=1}^{n}\left(d_{k}-d\right)=0\ \ \mbox{a.s.}\ \ $for
	\begin{eqnarray}\label{longonlyp}
	0\ \ <\ \ p\ \ <\ \ \left\{ \begin{array}{ll} 2 \wedge \frac{1}{2 - 2\sigma}, & s=2 \\ \frac{2}{3 - 2\sigma}, & s \neq 2 \end{array}\right..
	\end{eqnarray}
Furthermore, if $E[(\xi_1)^\chi]=0$ for all odd $0<\chi<s$ and $s$ is even, then the constraint for (\ref{longonlyp}) can be relaxed to 
	\begin{eqnarray}\label{longonlyp2}
	0\ \ <\ \ p\ \ <\ \ 2 \wedge \frac{1}{2 - 2\sigma}\ .
	\end{eqnarray}
\end{theorem}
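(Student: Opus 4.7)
The plan is to reduce the almost-sure statement to an $L^2$ estimate $\|T_n\|_2^2 \ll n^{\gamma}$ on the centred partial sum $T_n := \sum_{k=1}^{n}(d_k - d)$, with $\gamma = 3-2\sigma$ in the case $s\ne 2$ and $\gamma = (4-4\sigma)\vee 1$ in the case $s=2$ or ``$s$ even with symmetric $\xi_1$''. Given such a bound, a Móricz-type maximal inequality for stationary sequences yields $E\bigl[\max_{k\le n} T_k^2\bigr] \ll n^{\gamma}(\log n)^2$, and Chebyshev combined with Borel--Cantelli along a dyadic subsequence $n_m = 2^m$ gives $n^{-1/p}T_n \to 0$ a.s.\ for every $p < 2/\gamma$, matching the ranges in (\ref{longonlyp})--(\ref{longonlyp2}); the polylogarithmic slack is absorbed by the strictness of the inequalities on $p$.

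To establish the variance bound I would start from the multilinear expansion $d_k = \sum_{(l_1,\dots,l_s)\in\mathbb Z^s} c_{k-l_1}\cdots c_{k-l_s}\,\xi_{l_1}\cdots\xi_{l_s}$, so that $\|T_n\|_2^2$ becomes a sum over $k_1,k_2\le n$ and $\vec l_1,\vec l_2\in\mathbb Z^s$ weighted by a $2s$-fold joint moment of the innovations. Because the $\xi_l$ are i.i.d.\ zero-mean, only configurations in which every distinct $l$ appears with multiplicity at least $2$ survive, and such configurations are indexed by partitions of $\{1,\dots,2s\}$ into blocks of size $\ge 2$. I would classify them by their \emph{crossing profile}: a block is \emph{internal} to $k_1$ or to $k_2$, or \emph{crossing} if it mixes indices from $\vec l_1$ and $\vec l_2$. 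Purely-internal profiles reassemble $E[d_{k_1}]\,E[d_{k_2}]=d^2$ and cancel against the centring, leaving only profiles with at least one crossing block; this is exactly what Definition~\ref{sets} is designed to book-keep.

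For a single crossing pair of size $2$ together with internal matchings of the remaining $s-1$ innovations on each side, the crossing factor is $\sum_l c_{k_1-l}c_{k_2-l}\ll |k_1-k_2|^{1-2\sigma}$ by Lemma~\ref{inequality}, while the internal factors are absolute constants controlled by moments of $\xi_1$ up to order $2s$ (finite by hypothesis). Summing over $k_1,k_2\le n$ produces a contribution of order $n^{3-2\sigma}$; profiles with more crossings only acquire further $|k_1-k_2|^{1-2\sigma}$ factors and are never larger than the single-crossing bound when $s\ne 2$, yielding $\gamma = 3-2\sigma$ in that case. The case $s=2$ is special: the total of only $4$ innovations forces any single-crossing profile to leave an unpaired $\xi$ on one side, whose zero mean annihilates the contribution, so the smallest surviving profile has two crossings and contributes $(\sum_l c_{k_1-l}c_{k_2-l})^2 \ll |k_1-k_2|^{2-4\sigma}$, whose double sum over $k_1,k_2\le n$ is $\ll n^{(4-4\sigma)\vee 1}$. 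The same parity argument handles the symmetric-$\xi_1$, even-$s$ case: vanishing of all odd moments forces every block to have even size, so a single-crossing profile requires an odd-sized internal block on each side, which vanishes.

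The main obstacle I anticipate is precisely this combinatorial verification: showing that among all crossing profiles the single-crossing one dominates when $s\ne 2$, and that the parity constraints eliminate every single-crossing profile in the $s=2$ or symmetric-even-$s$ cases so that the two-crossing profile takes over. Once these estimates are in place, the maximal inequality and dyadic Borel--Cantelli deliver the almost-sure convergence routinely, so the entire argument hinges on the crossing-partition bookkeeping.
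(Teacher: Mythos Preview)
Your proposal is correct and follows essentially the same route as the paper: the paper also expands $d_k$ multilinearly, organizes the covariance computation via partition/matching combinatorics (Definitions~\ref{matching}--\ref{sets} and Lemma~\ref{covlemma} play exactly the role of your crossing profiles), extracts the increment bound $E[(T_n-T_o)^2]\ll (n-o)^\gamma$ in Lemma~\ref{psilemma}, and finishes with Serfling's maximal inequality (Theorem~\ref{stout}) plus dyadic Borel--Cantelli. The only cosmetic difference is that the paper first groups the $s$ indices on each side by equality into $\xi^{a_r}$-blocks and then matches these blocks across the two sides via the function $\nu$, whereas you partition all $2s$ indices at once; the resulting case analysis---including the parity argument that kills the single-crossing term when $s=2$ or when $s$ is even with symmetric $\xi_1$---coincides with the paper's Cases~1--3 and Extra Case.
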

\begin{proof}
	By expanding the expressions for $d_k$ and $d$, we get that
	\begin{eqnarray}\nonumber
	\ \ \sum_{k=1}^n (d_k-d)\ = \ \sum_{k=1}^n\sum\limits _{l_1=-\infty }^{\infty}\hdots\sum\limits _{l_s=-\infty }^{\infty} \left(\prod_{r=1}^{s}c_{k-l_r}\right) \left(\prod_{r=1}^{s}\xi_{l_r} - E\left(\prod_{r=1}^{s}\xi_{l_r}\right)\right).
	\end{eqnarray}
	This expression for $\sum_{k=1}^n (d_k-d)$ can be broken up in several sums based on the combinations of subscripts of $\xi$'s that are equal. That is, $\sum_{k=1}^n (d_k-d)$ can be seen as the sum of
	\begin{eqnarray}\label{sqlambdaq}
	\!\!\!\!\!\!\!\!\!\!S_n(q, \lambda_q)\ =\ \sum_{k=1}^{n}\ \ \sum_{l_1\neq l_2\neq \hdots \neq l_q} \left(\prod_{r=1}^{q} c_{k-l_r}^{a_r}\right) \left(\prod_{r=1}^{q}\xi_{l_r}^{a_r} - E\left(\prod_{r=1}^{q}\xi_{l_r}^{a_r}\right)\right),
	\end{eqnarray}
	where $q$ ranges over $\{1,2,\hdots,s\}$, and $\lambda_q = (a_1, a_2, \hdots, a_q)$ is a decreasing partition of $s$, i.e. it satisfies $a_1 + \hdots + a_q = s$ and $a_1 \geq a_2 \geq \hdots \geq a_q\geq 1$. We will now work with an analogous summation $Y_{n',n,\delta}^{\lambda_q}$, with general random variables $\psi_{l}^{(r)}$ instead of $\xi_{l}^{a_r}$.
	
	\subsubsection{Bounding covariance of $\prod_{r=1}^{q}\psi_{l_{r}}^{(r)}$ and $\prod_{r=1}^{q}\psi_{m_{r}}^{(r)}$}
	\noindent
	We first give the following definitions.
	\begin{definition}\label{matching}
		For $\ q \in \mathbb{N},\ v\in\{1,2,\hdots,q\}$, let the sets $V_r = V_r^{v,q}$ for $1\leq r\leq 6$, be such that $V_1,V_2,V_3$ partition $\{q-v+1,\hdots,q\}$, and $V_4,V_5,V_6$ partition $\{1,\hdots,q-v\}$. A function $\nu=\nu^{q,v}(V_2,V_3,V_4,V_5)$, such that
		$$\nu: V_2\cup V_3\cup V_4\cup V_5\ \rightarrow\ \{1,\hdots,q\},$$
		$\nu$ is injective, $\nu(V_2\cup V_4)\subseteq\{q-v+1,\hdots,q\}$, and $\nu(V_3\cup V_5)\subseteq\{1,\hdots,q-v\}$, will be called a matching function. For ease of notation, we further define $W_1=W_1^{q,v}(\nu)=\ \{q-v+1,\hdots,q\}\setminus\nu(V_2\cup V_4)$, $\ \ W_r=W_r^{q,v}(\nu)=\nu(V_r)\ $ for $2\leq r\leq 5$, and $W_6=W_6^{q,v}(\nu)=\{1,\hdots,q-v\}\setminus\nu(V_3\cup V_5)$.
	\end{definition}
	\begin{remark}\label{vobserve}
		In Definition \ref{matching}, observe that $\ \left|V_1\right|+\hdots+\left|V_6\right|=\left|W_1\right|+\left|\nu(V_2)\right|+\hdots+\left|\nu(V_5)\right|+\left|W_6\right|=q$. Also, since $V_1,V_2,V_3\ $ partition $\{q-v+1,\hdots,q\}$, as do $\ W_1,\nu(V_2),\nu(V_4)$, we get that $\ \left|V_1\right|+\left|V_2\right|+\left|V_3\right|=\left|W_1\right|+\left|\nu(V_2)\right|+\left|\nu(V_4)\right|=v$. Similarly, $\ \left|V_4\right|+\left|V_5\right|+\left|V_6\right|=\left|\nu(V_3)\right|+\left|\nu(V_5)\right|+\left|W_6\right|=q-v$. Finally, due to injectivity of $\nu$, we have $|\nu(V_r)|=|V_r|$ for $2\leq r\leq 5$.
	\end{remark}
	\begin{definition}\label{sets}
		Let $\ q \in \mathbb{N},\ v\in\{1,2,\hdots,q\}$, and $\Delta=\Delta_q$ be the set of all tuples in $\mathbb{Z}^q$ with distinct elements, i.e. $\boldsymbol\ell\in\Delta$ satisfies $l_i\neq l_j$ for all $1\leq i<j\leq q$\footnote[1]{As mentioned in Subsection \ref{list}, for $\boldsymbol\ell\in\mathbb{Z}^d$, $l_i$ denotes the $i$th coordinate of $\boldsymbol\ell$, where $1\leq i\leq q$.}. For sets $V_1,...,V_6$ and matching function $\nu$ as in Definition \ref{matching}, we let
		\begin{eqnarray}\nonumber
		\Delta\!\times\!\Delta(V_1,...,V_6,\nu) = \{(\boldsymbol\ell,\textit{\textbf{m}})\in\Delta\!\times\!\Delta:\ l_{r}=m_{\nu(r)},\ \forall\ r\in V_2\cup V_3\cup V_4\cup V_5\}.
		\end{eqnarray}
		Observe that the collection $\left\{\Delta\times\Delta(V_1,...,V_6,\nu):\{V_1,V_2,V_3\} \mbox{ partitions }\right.$\\$\{q\!-\!v\!+\!1,\hdots,q\},\ \{V_4,V_5,V_6\} \mbox{ partitions } \{1,\hdots,q\!-\!v\},\ \nu=\nu^{q,v}(V_2,V_3,V_4,V_5)$ is a matching function$\left.\right\}$ partitions $\Delta\times\Delta$ .
	\end{definition}
	The following lemma bounds the covariance of $\prod_{r=1}^{q}\psi_{l_r}^{(r)}$ and $\prod_{r=1}^{q}\psi_{m_{r}}^{(r)}$.
	\begin{lemma}\label{covlemma}
		Let $\ q \in \mathbb{N},\ v\in\{1,2,\hdots,q\},\ \delta\geq 1$, and $\{(\psi_l^{(1)},\hdots,\psi_l^{(q)})\}_{l\in \mathbb{Z}}$ be i.i.d. $\ \mathbb{R}^q$-valued random vectors, such that
		\begin{eqnarray}\label{covassump}
		\quad\left\{\begin{array}{l} E\left(\psi_1^{(r)}\right)\qquad \ll\  \textbf{1}_{\{1\leq r\leq q-v\}},
		\\
		E\left[\left(\psi_1^{(r)}\right)^2\right]\ \ \ll\ \delta\textbf{1}_{\{r=1\}}+\textbf{1}_{\{r\neq 1\}},
		\end{array}\right.\qquad \forall\ \ 1\leq r \leq q\ .
		\end{eqnarray}
		Then, for q, v and $(\boldsymbol\ell,\textit{\textbf{m}})\in\Delta\times\Delta(V_1,...,V_6,\nu)$ as in Definition \ref{sets},
		\begin{eqnarray}\nonumber
		&&\left|E\left(\prod_{r=1}^{q}(\psi_{l_r}^{(r)}\psi_{m_r}^{(r)})\right) - E\left(\prod_{r=1}^{q}\psi_{l_r}^{(r)}\right)E\left(\prod_{r=1}^{q}\psi_{m_r}^{(r)}\right)\right|
		\\
		&\stackrel{\delta}{\ll}&
		\left\{ \begin{array}{ll} 0,\quad &\left|V_1\right|>0\ \mbox{ or }\ \left|W_1\right|>0\ \mbox{ or }\ \left|V_6\right|=q,\\ 1,\quad &0<\left|V_6\right|<q,\ \left|V_1\right|=\left|V_4\right|=\left|V_5\right|=\left|W_1\right|=0,\\ \delta,\quad &\mbox{otherwise.}\end{array}\right.
		\end{eqnarray}
	\end{lemma}
	\begin{proof}
The first equation in (\ref{covassump}) tells us that $\left\{\psi_{l}^{(r)}\!,\ r\in\{q-v+1,\hdots,s\},l\in\mathbb{Z}\right\}$ are zero mean 
and they will be referred to as the zero-mean $\psi$'s. 
The second equation in (\ref{covassump}) says that $\left\{\psi_{l}^{(1)}\right\}$ may have distinctly different second moments than $\left\{\psi_{l}^{(r)},\ r>1\right\}$, which is important because we will substitute different values in place of $\left\{\psi_{l}^{(1)}\right\}$. (\ref{covassump}) will also come up as (\ref{psiassump}) in Lemma \ref{psilemma}. When $V_1\cup V_2\cup V_3\neq \phi$, due to the independence of $\psi$'s with different subscripts, and the zero-mean property of $\psi_{l_r}^{(r)}$ for $r\in V_1\cup V_2\cup V_3$ in (\ref{covassump}), we have
		\begin{eqnarray}\nonumber
		E\left(\prod_{r=1}^{q}\psi_{l_r}^{(r)}\right) = E\left(\prod_{r\in V_4\cup V_5\cup V_6}\psi_{l_r}^{(r)}\right)\left(\prod_{r\in V_1\cup V_2\cup V_3}E\left(\psi_{l_r}^{(r)}\right)\right)=0\ .
		\end{eqnarray}
		Similarly, when $W_1\cup \nu(V_2)\cup \nu(V_4)\neq \phi$, we get that $E\left(\prod_{r=1}^{q}\psi_{m_r}^{(r)}\right)=0$. Hence, when $V_1\cup V_2\cup V_3\neq \phi\ $ or $W_1\cup \nu(V_2)\cup \nu(V_4)\neq \phi$, we get that
		\begin{eqnarray}\label{meanzero}
		E\left(\prod_{r=1}^{q}\psi_{l_r}^{(r)}\right)E\left(\prod_{r=1}^{q}\psi_{m_r}^{(r)}\right)=0\ .
		\end{eqnarray}\\
		\textbf{Case 1:} $\left|V_1\right|>0\ \mbox{ or }\ \left|W_1\right|>0\ \mbox{ or }\ \left|V_6\right|=q$.\\
		This case deals with situations when there is at least one unmatched zero-mean $\psi$, or when all $\psi$'s are unmatched. $\left|V_1\right|>0$ and $\left|W_1\right|>0$ imply (\ref{meanzero}) holds. When $V_1\neq \phi$, we see from Definition \ref{matching}, that for all $r\in V_1$, $l_r\neq m_j$ for all $1\leq j\leq q$. Hence, due to the independence of $\psi$'s with different subscripts, and the zero-mean property of $\psi_{l_r}^{(r)}$ for $r\in V_1$, we get that
		\begin{eqnarray}\label{ebcum0}
		\!\!\!\!\!\!\!\!\!\!\!\!\!\!E\left(\prod_{r=1}^{q}(\psi_{l_r}^{(r)}\psi_{m_r}^{(r)})\right) = E\left(\prod_{r\in\{1,\hdots,q\}\setminus V_1}\!\!\psi_{l_{r}}^{(r)}\ \prod_{r=1}^{q}\psi_{m_r}^{(r)}\!\right)\prod_{r\in V_1}E\left(\psi_{l_r}^{(r)}\!\right) = 0\ .
		\end{eqnarray}
		Similarly, (\ref{ebcum0}) holds when $W_1\neq \phi$. Thus, when $\left|V_1\right|>0\ \mbox{or}\ \left|W_1\right|>0$, from (\ref{meanzero}) and (\ref{ebcum0}), we get that
		\begin{eqnarray}\label{ces1}
		\left|E\left(\prod_{r=1}^{q}(\psi_{l_r}^{(r)}\psi_{m_r}^{(r)})\right) - E\left(\prod_{r=1}^{q}\psi_{l_r}^{(r)}\right)E\left(\prod_{r=1}^{q}\psi_{m_r}^{(r)}\right)\right|=0\ .
		\end{eqnarray}
		When $\left|V_6\right|=q$, we must have $v=0$ and none of the $l$'s are equal to any of the $m$'s, i.e. $\{l_1,\hdots,l_q\}\cap\{m_1,\hdots,m_q\}=\phi$. In that scenario, due to the independence of $\psi_{l_r}^{(r)}$'s with $\psi_{m_r}^{(r)}$'s, (\ref{ces1}) holds as well.\\\\
		\textbf{Case 2:} $0<\left|V_6\right|<q,\ \left|W_1\right|=\left|V_1\right|=\left|V_4\right|=\left|V_5\right|=0$.\\
		In this case we will show that $l_1\not\in \{m_1,\hdots,m_q\}$ and $m_1\not\in \{l_1,\hdots,l_q\}$, i.e. $\psi_{l_{1}}^{(1)}$ and $\psi_{m_{1}}^{(1)}$ will remain unmatched, so we do not have to deal with the second moment of $\psi^{(1)}$. From Remark \ref{vobserve}, note that $\left|V_4\right|+\left|V_5\right|+\left|V_6\right|=q-v$, hence $0< \left|V_6\right|<q$ along with $\left|V_4\right|=\left|V_5\right|=0$ implies that $0<v<q$. Since $v$ is the cardinality of $V_1\cup V_2\cup V_3$, this means that $\{1,\hdots,q\}\neq V_1\cup V_2\cup V_3\neq \phi$, and (\ref{meanzero}) holds in this case.
		
		From Remark \ref{vobserve}, using injectivity of $\nu$, we get that $\ \left|V_1\right|+\left|V_2\right|+\left|V_3\right|=\left|W_1\right|+\left|V_2\right|+\left|V_4\right|$. Thus, $\left|V_1\right|=\left|W_1\right|=0$ implies that $\left|V_3\right|=\left|V_4\right|$. Also, $v<q$ implies that $q-v\geq 1$, hence $1\in V_4\cup V_5\cup V_6$ and $1\in \nu(V_3)\cup \nu(V_5)\cup W_6$. Further, $\left|V_3\right|=\left|V_4\right|=\left|V_5\right|=0$ ensures that $1\in V_6$ and $1\in W_6$. This means that $l_1\not\in \{m_1,\hdots,m_q\}$ and $m_1\not\in \{l_1,\hdots,l_q\}$. Hence, due to independence of $\psi$'s with unequal subscripts, Cauchy-Schwartz inequality, and (\ref{covassump}), we find
		\begin{eqnarray}\nonumber\label{ebcum1}
		E\left(\prod_{r=1}^{q}(\psi_{l_r}^{(r)}\psi_{m_r}^{(r)})\right)\! &=&\! E\left(\psi_{l_1}^{(1)}\right)E\left(\psi_{m_1}^{(1)}\right)E\left(\prod_{r=2}^{q}(\psi_{l_r}^{(r)}\psi_{m_r}^{(r)})\right)
		\\ \nonumber
		\!&\leq&\!
		E\!\left(\psi_{l_1}^{(1)}\right)\!E\!\left(\psi_{m_1}^{(1)}\right)\!\left(\prod_{r=2}^{q}\!E\left[\left(\psi_{l_r}^{(r)}\right)^2\right]\prod_{r=2}^{q}\!E\left[\left(\psi_{m_r}^{(r)}\right)^2\right]\right)^{\!\!\frac12}
		\\
		\!&\stackrel{\delta}{\ll}&\!1\ .
		\end{eqnarray}
		From (\ref{meanzero}) and (\ref{ebcum1}), we get that
		\begin{eqnarray}\label{ces2}
		\left|E\left(\prod_{r=1}^{q}(\psi_{l_r}^{(r)}\psi_{m_r}^{(r)})\right) - E\left(\prod_{r=1}^{q}\psi_{l_r}^{(r)}\right)E\left(\prod_{r=1}^{q}\psi_{m_r}^{(r)}\right)\right|\ \stackrel{\delta}{\ll}\ 1\ .
		\end{eqnarray}\\
		\textbf{Case 3:} None of the above.\\
		For all other cases, we will get various bounds, and we will show that the worst of them is $\delta$. Due to the independence of $\psi$'s with different subscripts, Cauchy-Schwartz inequality, and the fact that $E\left[\left(\psi_1^{(r)}\right)^2\right]\ll\delta$ (from (\ref{covassump})), we have that
		\begin{eqnarray}\nonumber\label{ebcumdelta}
		\hspace{-1cm} E\left(\prod_{r=1}^{q}(\psi_{l_r}^{(r)}\psi_{m_r}^{(r)})\right)\!\!\!
		&\leq&
		\!\!\!\left(\prod_{r=1}^{q}E\left[\left(\psi_{l_r}^{(r)}\right)^2\right]\prod_{r=1}^{q}E\left[\left(\psi_{m_r}^{(r)}\right)^2\right]\right)^{\!\!\frac12}
		\\
		&\stackrel{\delta}{\ll}&
		\!\!\!\left(\delta^2\prod_{r=2}^{q}E\left[\left(\psi_{l_r}^{(r)}\right)^2\right]\prod_{r=2}^{q}E\left[\left(\psi_{m_r}^{(r)}\right)^2\right]\right)^{\!\!\frac12}\ \stackrel{\delta}{\ll}\ \delta.
		\end{eqnarray}
		We also see that $E\left(\prod_{r=1}^{q}\psi_{l_r}^{(r)}\right)E\left(\prod_{r=1}^{q}\psi_{m_r}^{(r)}\right)\ \stackrel{\delta}{\ll}1$, due to independence of $\psi$'s with different subscripts, so using (\ref{ebcumdelta}) and Triangle Inequality, we get that
		\begin{eqnarray}\label{ces3}
		\!\!\!\!\!\!\!\!\!\!\!\!\left|E\left(\prod_{r=1}^{q}(\psi_{l_r}^{(r)}\psi_{m_r}^{(r)})\right) - E\left(\prod_{r=1}^{q}\psi_{l_r}^{(r)}\right)E\left(\prod_{r=1}^{q}\psi_{m_r}^{(r)}\right)\right|\ \stackrel{\delta}{\ll}\ \delta+1\ \stackrel{\delta}{\ll}\ \delta\ .
		\end{eqnarray}\\
		Lemma \ref{covlemma} follows from (\ref{ces1}, \ref{ces2}) and (\ref{ces3}).
	\end{proof}
	
The next lemma bounds the second moment of a class of partial sum differences,
which we will use first to bound the second moment of $S_n(q,\lambda_q)$ and later on to handle heavy tails. The proof is technical and involves repeated applications of Lemmas \ref{inequality}  and \ref{newineq}, and is relegated to the supplementary materials, but follows the idea in Lemma \ref{covlemma} of considering sets corresponding to partitions of $s$.
	\begin{lemma}\label{psilemma}
		Let $n'<n\in \mathbb{N}\cup\{0\}$, $s \in \mathbb{N},\ \delta\geq 1,\ \lambda_q = (a_1, a_2, \hdots, a_q)$ is a decreasing partition of $s$, and $v = \left|\{1\leq r\leq q: a_r = 1\}\right|$. Let $\{c_l\}_{l\in\mathbb Z}$ satisfy $\ \sup\limits_{l\in\mathbb Z}|l|^\sigma|c_l|<\infty,\ \ \mbox{for some}\ \ \sigma \in \left(\frac{1}{2},1\right)$, and $\{(\psi_l^{(1)},\hdots,\psi_l^{(q)})\}_{l\in \mathbb{Z}}$ be i.i.d $\ \mathbb{R}^q$-valued random vectors, such that
		\begin{eqnarray}\label{psiassump}
		\quad\left\{ \begin{array}{l} E\left(\psi_1^{(r)}\right)\qquad \ll\  \textbf{1}_{\{1\leq r\leq q-v\}},
		\\
		E\left[\left(\psi_1^{(r)}\right)^2\right]\ \ \ll\ \delta\textbf{1}_{\{r=1\}}+\textbf{1}_{\{r\neq 1\}},
		\end{array}\right.\qquad \forall\ \ 1\leq r \leq q\ .
		\end{eqnarray}
		\begin{eqnarray}\nonumber
		\hspace{-0.5cm}\mbox{Define},\ Y_{n',n,\delta}^{\lambda_q} = \sum_{k=n'+1}^{n}\  \sum_{\boldsymbol\ell\in\Delta}\left(\prod_{r=1}^{q} c_{k-l_r}^{a_r}\right) \left(\prod_{r=1}^{q}\psi_{l_r}^{(r)} - E\left(\prod_{r=1}^{q}\psi_{l_r}^{(r)}\right)\right).
		\end{eqnarray}
		Then,$\ E\left[(Y_{n',n,\delta}^{\lambda_q})^2\right]\!\stackrel{n',n,\delta}{\ll}
		\!\left\{ \begin{array}{ll}\delta\ (n-n'), &a_q \geq 2,\\ \delta\ (n-n')\ l_{s,\sigma}(n-n'), &a_1 = 1,\\ (\delta\ (n-n'))\vee((n-n')\ l_{1,\sigma}(n-n')),\! &a_q=1,a_1\geq 2,\end{array}\right.$\\\\
		where $\boldsymbol\ell$ and $l_{s,\sigma}$ are defined in the Notation List in Subsection \ref{list}. Further, if s is even and $E\left(\psi_1^{(r)}\right) = 0\ $for odd $a_r$, then this bound can be tightened to
		$$E\left[(Y_{n',n,\delta}^{\lambda_q})^2\right]\ \ \stackrel{n',n,\delta}{\ll}\ \ (\delta\ (n-n'))\ \vee\ ((n-n')\ l_{2,\sigma}(n-n')),$$
		when $a_q = 1$ and $a_1 \geq 2$.
	\end{lemma}
	
	\subsubsection{Rate of Convergence for Theorem \ref{longonlygen}}
	\noindent
Returning to the proof of Theorem \ref{longonlygen}, we will bound the 
second moment of $S_n(q, \lambda_q)$ defined in (\ref{sqlambdaq}). 
In Lemma \ref{psilemma}, taking $\psi_{l_r}^{(r)} = \xi_{l_r}^{a_r}\ $ for $\ 1\leq r\leq q$, and$\ \delta = 1$ (since $E\left[\left(\xi_{l_1}^{a_1}\right)^2\right]\ \stackrel{n',n}{\ll}\ 1$), we see that $Y_{n',n,\delta}^{\lambda_q}$ becomes $S_n(q, \lambda_q)-S_{n'}(q, \lambda_q)$, and
	\begin{eqnarray}\label{res}
	\!E\left[\big(S_n(q, \lambda_q)-S_{n'}(q, \lambda_q)\big)^2\right] \stackrel{n',n}{\ll} \left\{ \begin{array}{ll} n-{n'}, &a_q \geq 2\\ (n-{n'})\ l_{s,\sigma}(n-{n'}), &a_1 = 1\\ (n-{n'})\ l_{1,\sigma}(n-{n'}),\! &a_q=1,a_1\geq 2.\end{array}\right.
	\end{eqnarray}
	But, when $s$ is even and $E\left(\xi_{l}^{a_r}\right)=0$ for odd $a_r$ so $E\left(\psi_{l}^{(r)}\right)=E\left(\xi_{l}^{a_r}\right)=0$, we find from Lemma \ref{psilemma} that (\ref{res}) for $a_q=1$ and $a_1\geq 2$ improves to
	\begin{eqnarray}\label{res1}
	\!\!\!\!\!E\left[\big(S_n(q,\lambda_q)-S_{n'}(q,\lambda_q)\big)^2\right] \stackrel{n',n,\delta}{\ll}(\delta\ (n-{n'}))\vee ((n-{n'})\ l_{2,\sigma}(n-{n'})).
	\end{eqnarray}
The bounds in (\ref{res}) and (\ref{res1}) are given in terms of a partition $\lambda_q$. 
We can check which partitions are possible for a given $s$, and then apply (\ref{res}) and (\ref{res1}) to bound the second moment of $\sum_{k=1}^n (d_k-d)$. Recall that $\ s=a_1+a_2+\hdots+a_q\ $ and $\ a_1\geq a_2\geq \hdots \geq a_q\geq 1$. When $s=1$, none of the cases except $a_1=1$ are possible, and when $s=2$, the third case i.e. $a_q=1,\ a_1\geq 2$ is not possible. Hence, we get from (\ref{res}), that
	\begin{eqnarray}\label{max0}
	E\left[\big(S_n(q, \lambda_q)-S_{n'}(q, \lambda_q)\big)^2\right] 
	&\stackrel{n',n}{\ll} & 
	\left\{ \begin{array}{ll} (n-{n'})\ l_{2,\sigma}(n-{n'}), &s=2\\ (n-{n'})\ l_{1,\sigma}(n-{n'}), &s\neq 2\end{array}\right.,\qquad\quad
	\end{eqnarray}
	and from (\ref{res1}), that if $s$ is even and $\xi_l$ is a symmetric random variable, then
	\begin{eqnarray}\label{max0.5}
	\!\!\!E\left[\big(S_n(q, \lambda_q)-S_{n'}(q, \lambda_q)\big)^2\right]\  \stackrel{n',n}{\ll}\ (n-{n'})\ l_{2,\sigma}(n-{n'})\ .
	\end{eqnarray}
	Let $n_{r}=2^{r}$, $n\in\left[n_{r},n_{r+1}\right)$ and $r\in\mathbb{N}\cup\{0\}$. Then, putting $n=n_r$ and ${n'}=0$ in (\ref{max0}), we get that
	\begin{eqnarray}\label{max1}
	E\left[\big(S_{n_r}(q, \lambda_q)\big)^2\right]\ 
	&\stackrel{r}{\ll} &\ 
	\left\{ \begin{array}{ll} n_r\ l_{2,\sigma}(n_r), &s=2\\ n_r\ l_{1,\sigma}(n_r),\ \ &s\neq 2\end{array}\right..\qquad\qquad
	\end{eqnarray}
	$\bullet\ $ First, consider $s\neq 2$. Then for $n_{r}\leq n' < n < n_{r+1}$, it follows from (\ref{max0}), using Theorem \ref{stout} with $\ Z_i=S_{i}(q,\lambda_q)-S_{i-1}(q,\lambda_q)\ $ and $\ f(n)=n\ l_{1,\sigma}(n)$, that
	\begin{eqnarray}\label{max2}
	E\left[\max_{n_{r}\le n'<n<n_{r+1}}\big(S_n(q, \lambda_q)-S_{n'}(q, \lambda_q)\big)^2\right]\ \stackrel{r}{\ll}\ r^2n_r\ l_{1,\sigma}(n_r)\ .
	\end{eqnarray}
	Combining (\ref{max1}) and (\ref{max2}), we have that
	\begin{eqnarray}\label{combine1}
	\!\!\!\!\!\!\!\!\!\!\!\!\!\!\!\sum_{r=0}^\infty E\left[\max_{n_{r}\le n<n_{r+1}}\left({n^{-\frac{1}{p}}}S_n(q, \lambda_q)\right)^{2}\right]
	&\ll& \sum\limits_{r=0}^\infty r^2n_r^{1-\frac2p}\ l_{1,\sigma}(n_r)\ <\ \infty,
	\end{eqnarray}
	provided $(3-2\sigma)<\frac{2}{p}$, i.e. $p<\frac{2}{3-2\sigma}$. From (\ref{combine1}), it follows by Fubini's Theorem and $n^{\mbox{th}}$ term divergence that $\lim_{n\rightarrow\infty}{n^{-\frac{1}{p}}}S_n(q, \lambda_q)=0\quad a.s.$, for
	\begin{eqnarray}\label{sconv1}
	p<\frac{2}{3-2\sigma}.
	\end{eqnarray}
	$\bullet\ $ Now let $s=2$. Then, using (\ref{max0}) and proceeding along the lines of (\ref{max0.5}-\ref{sconv1}), we get that $\lim_{n\rightarrow\infty}{n^{-\frac{1}{p}}}S_n(q, \lambda_q)=0\quad a.s.$, for
	\begin{eqnarray}\label{sconv2}
	p<2\wedge \frac{1}{2-2\sigma}.
	\end{eqnarray}
	$\bullet\ $ Finally, we consider the case where s is even, and $E[(\xi_1)^\chi]=0$ for all odd $0<\chi<s$. Again, using (\ref{max0}) and proceeding along the lines of (\ref{max0.5}-\ref{sconv1}), we get that $\lim_{n\rightarrow\infty}{n^{-\frac{1}{p}}}S_n(q, \lambda_q)=0\quad a.s.$, for
	\begin{eqnarray}\label{sconv3}
	p<2\wedge \frac{1}{2-2\sigma}.
	\end{eqnarray}
Since $\sum_{k=1}^n (d_k-d)$ is the sum of $S_n(q,\lambda_q)$ over all $q\in\{1,\hdots,s\}$ and partitions $\lambda_q$ (which are finite in number), we get from (\ref{sconv1},\ref{sconv2}) and (\ref{sconv3}), that
	$$\lim_{n\rightarrow\infty}{n^{-\frac{1}{p}}}\sum_{k=1}^n (d_k-d)=0\quad a.s.$$
for the values of $p$ described in (\ref{longonlyp}) and (\ref{longonlyp2}). 
This proves Theorem \ref{longonlygen}.
\end{proof}

\subsection{Heavy-Tailed Case of Theorem \ref{mostgen}}\label{heavytail}
\noindent
We first present three remarks before analyzing the heavy-tailed scenario.
\begin{remark}\label{alphanot2}
From Condition (Tail), we find that heavy tails can only arise when $0 \leq i \leq \lfloor \frac{s-1}{2} \rfloor\ $, i.e. for products of at least $s-\lfloor\frac{s-1}{2}\rfloor=\lceil\frac{s+1}{2}\rceil\ $ terms. 
When $s=1$, Condition (Reg) along with Remark \ref{sigmasless1} eliminate the possibility of heavy tails. When $s\geq 2$, we can assume without loss of generality, that $\alpha_i \in (1,2)\cup (2,\infty)$ (due to Remark \ref{sigmasless1}). However, if $\alpha_i>2$, we see from Remark \ref{htpreclude} that heavy tails do not arise. Since we will deal only with those terms exhibiting heavy tails in this section, we assume that $s\geq 2\ $, $i\in \left\{0,1,\hdots,\left\lfloor\frac{s-1}{2}\right\rfloor\right\}$, and $1 < \alpha_i < 2$.
\end{remark}
\begin{remark}
	For a given partition $\lambda_q=\{a_1,a_2,\hdots,a_q\}$, heavy tails can only come up in the innovation involving the highest power, i.e. $\xi_l^{a_1}$. This is because for a term to possess heavy tails, its variance must be infinite, hence $a_1>\frac{s}{2}$. But that would force the rest of the $a_r$'s to be less than $\frac{s}{2}$, thus precluding heavy tails in terms involving $\xi_l^{a_r}$ for $r\in\{2,\hdots,q\}$. This shows that heavy tails concerning $\alpha_i$ will arise only in the sum
	\begin{eqnarray}\label{sstar}
	\!\!\!\!\!\!\!\!\!S_{n}^\star(i)=\sum_{k=1}^{n}\! \sum_{\substack{l_1,l_2,\hdots,l_{i+1}\\l_1\not\in\{l_2,\hdots,l_{i+1}\}}}\!\!\!\!\!\left(\!c_{k-l_1}^{s-i}\!\prod_{r=2}^{i+1}\!c_{k-l_r}\!\right)\! \left(\!\xi_{l_1}^{s-i}\prod_{r=2}^{i+1}\xi_{l_r} - E\!\left(\!\xi_{l_1}^{s-i}\!\prod_{r=2}^{i+1}\xi_{l_r}\!\right)\!\right)\!.
	\end{eqnarray}
\end{remark}
\begin{remark}
	Alternatively, for heavy tails involving $\alpha_i$, we could also consider the sum $S_{n}(q, \lambda_q)$ (from (\ref{sqlambdaq})) with $a_1 = s-i$, i.e.
	\begin{eqnarray}\nonumber
	S_n(q, \lambda_q) = \sum_{k=1}^{n}\ \sum_{l_1\neq l_2\neq \hdots \neq l_q}\!\!\left(\!c_{k-l_1}^{s-i}\prod_{r=2}^{q} c_{k-l_r}^{a_r}\!\right)\! \left(\!\xi_{l_1}^{s-i}\prod_{r=2}^{q}\xi_{l_r}^{a_r} - E\!\left(\!\xi_{l_1}^{s-i}\prod_{r=2}^{q}\xi_{l_r}^{a_r}\!\right)\right),
	\end{eqnarray}
	where $\lambda_q=(s-i,a_2,\hdots,a_q)$. In fact, note that $S_{n}^\star$ (from (\ref{sstar})) is the sum of $S_{n}(q, \lambda_q)$ over all $q$, and all partitions $\lambda_q$ with $a_1 = s-i$. 
Both $S_{n}^\star(i)$ and $S_{n}(q, \lambda_q)$ have advantages. 
While $S_{n}^\star(i)$ has the advantage of having only one $\xi_l$ with power greater than one, $S_{n}(q, \lambda_q)$ has the advantage of having its summation over $\Delta$, so Lemma \ref{covlemma} can be easily applied to it. Hence, we will mostly use $S_{n}(q, \lambda_q)$ to deal with the truncated terms, and $S_{n}^\star(i)$ for the error terms.
\end{remark}

\subsubsection{Conversion to continuous random variables}
\noindent
Recall that in this section, $i\in \left\{0,1,\hdots,\left\lfloor\frac{s-1}{2}\right\rfloor\right\}$ is fixed. 
We first replace $\xi_l^{s-i}$ with continuous random variables $\zeta_{l}$
to ensure the truncation below does not take place at a point with positive probability. 
Let $\{U_l\}_{l\in\mathbb Z}$ be independent $[-1,1]$-uniform random variables that are independent of $\{\xi_{l}\}_{l \in \mathbb Z}$.
Then,
\begin{eqnarray}\nonumber
S_n(q, \lambda_q)=A_n(q,\lambda_q) - B_n(q,\lambda_q)\ ,
\end{eqnarray}
where we define,
\small{\begin{eqnarray}\nonumber
	\hspace{-0.7cm}&&A_n(q,\lambda_q)=\sum_{k=1}^{n}\sum_{l_1\neq l_2\neq\hdots \neq l_q}\!\!\!\left(\prod_{r=1}^{q} c_{k-l_r}^{a_r}\!\right)\!\! \left(\!\!\left(\xi_{l_1}^{s-i}\!+ U_{l_1}\right)\!\prod_{r=2}^{q}\xi_{l_r}^{a_r} - E\!\left(\!\!\left(\xi_{l_1}^{s-i}\!+ U_{l_1}\right)\!\prod_{r=2}^{q}\xi_{l_r}^{a_r}\!\right)\!\right)\!,
	\\ \nonumber
	\hspace{-0.7cm}&&B_n(q,\lambda_q)=\sum_{k=1}^{n}\sum_{l_1\neq l_2\neq\hdots \neq l_q}\!\!\!\left(\prod_{r=1}^{q} c_{k-l_r}^{a_r}\!\right)\!\! \left(\!U_{l_1}\prod_{r=2}^{q}\xi_{l_r}^{a_r} - E\!\left(\!U_{l_1}\prod_{r=2}^{q}\xi_{l_r}^{a_r}\!\right)\!\right)\ .
	\end{eqnarray}}
\normalsize\!\!{\bf Note:} 1) When $s$ is even and $a_1$ is odd, $\xi_{l_1}^{a_1}+ U_{l_1}$ will still be symmetric
so we can apply the reduced bound (\ref{res1}) when $a_q=1,\ a_1\ge 2$.\\
2) Heavy tails do not arise in $B_n(q,\lambda_q)$ since $E\left[\left(U_{l_1}\right)^2\right]\ $ is constant. \\
For $B_n(q,\lambda_q)$, we take $\psi_{l_r}^{(r)} = \xi_{l_r}^{a_r}\ \ \forall\ 2\leq r\leq q$, $\psi_{l_1}^{(1)} = U_{l_1}$, and $\delta = 1$, in Lemma \ref{psilemma} to get that $\ Y_{n',n,\delta}^{\lambda_q} = B_n-B_o$. This gives us the same bound as in (\ref{res}). Proceeding along the lines of (\ref{max1}\ -\ \ref{sconv3}), we get that $\lim_{n \rightarrow \infty}{n^{-\frac{1}{p}}}{B_n(q,\lambda_q)}\ =\ 0\ \mbox{a.s.}$ for the values of $p$ as mentioned in the statement of Theorem \ref{longonlygen}. 

Moving to $A_n(q,\lambda_q)$ and defining $\zeta_l = \xi_{l}^{s-i} + U_l$, which is a function of $i$, we note that $\zeta_l$ is a continuous random variable since it is a convolution of two random variables, one of which is absolutely continuous. Also, note that $\zeta_l$ has the same tail probability bound as $\xi_{l}^{s-i}$, since
\begin{eqnarray}\label{offdiagbound}\nonumber
\!\!\!\!\!\!\!\!\!\sup_{t\geq 2}\ t^{\alpha_i} P\left(\left|\zeta_{1} \right| > t \right)
&\leq& \sup_{t\geq 2}\ t^{\alpha_i} P\left(\left|\xi_1^{s-i}\right| > t-1 \right)\\
&\stackrel{}{\ll}& \sup_{t\geq 1}\ \left(\frac{t+1}{t}\right)^{\alpha_i}  t^{\alpha_i} P\left(\left|\xi_1^{s-i}\right| > t \right)\ <\ \infty.\ \ 
\end{eqnarray}
Thus, convergence of $S_n(q, \lambda_q)$ is equivalent to that of
\begin{eqnarray}\nonumber
A_n(q,\lambda_q)\ =\ \sum_{k=1}^{n}\ \sum_{l_1\neq l_2\neq\hdots \neq l_q} \left(\prod_{r=1}^{q} c_{k-l_r}^{a_r}\right) \left(\zeta_{l_1}\prod_{r=2}^{q}\xi_{l_r}^{a_r} - E\left(\zeta_{l_1}\prod_{r=2}^{q}\xi_{l_r}^{a_r}\right)\right).
\end{eqnarray}
Summing over all $q$, and partitions $\lambda_q$ where $a_1 = s-i$, we find that convergence of $S_{n}^\star(i)$ (from (\ref{sstar})) is equivalent to that of
\begin{eqnarray}\label{offdiagexpres}
\!\!\!\!\!\!\!\!\!\!T_n(i)\ =\ \sum_{k=1}^{n} \sum_{\substack{l_1,l_2,\hdots,l_{i+1}\\l_1\not\in\{l_2,\hdots,l_{i+1}\}}}\!\!\!\! \left(\!c_{k-l_1}^{s-i}\prod_{r=2}^{i+1}c_{k-l_r}\!\!\right)\! \left(\!\zeta_{l_1}\prod_{r=2}^{i+1}\xi_{l_r} - E\!\left(\!\zeta_{l_1}\prod_{r=2}^{i+1}\xi_{l_r}\!\right)\!\right).
\end{eqnarray}
\normalsize
\subsubsection{Truncation of $\zeta$ with highest power}
\noindent
We now break each $\zeta$ into truncated and error terms so that 
the second moment of the truncated term is finite, hence handled by 
Theorem \ref{longonlygen}. 
The error term convergence will later be proven using Jensen's, H\"older's and Doob's $L_p$ inequalities as well as Borel-Cantelli Lemma.\\
Let $\kappa>0$. Recall from Remark \ref{alphanot2}, that $1\leq \alpha_i\leq 2$. Using condition (\ref{offdiagbound}), fixing $v_r^+= n_r^\frac{\kappa}{2-\alpha_i}$ (where $n_r=2^r$) for $\ r\in \mathbb{N}\cup\{0\}$, and letting $v_r^- = -v_r^+$, we get
\begin{eqnarray}\label{uplus}
\!\!\!\!\!\!\!\!\left\{ \begin{array}{ll} 2\int_0^{v_r^+}P(\zeta_1 > s)s\ ds\ \stackrel{r}{\ll}\  2\int_0^{v_r^+} s^{-\alpha_i}s\ ds\ \ 
\stackrel{r}{\ll}\ \ n^\kappa_r\\
2\left|\int_{v_r^-}^0 P(\zeta_1 < s)s\ ds\right|\ \stackrel{r}{\ll}\ 2\int_{v_r^-}^0 |s|^{-\alpha_i}|s|\ ds \
\stackrel{r}{\ll} \ \ n^\kappa_r,
\end{array}\quad \forall\ r \in\mathbb{N}\cup\{0\}.\right.
\end{eqnarray}
Next, defining i.i.d random variables $\{\overline\zeta_l^{(r)}\}_{l\in \mathbb{Z}}$ and $\{\tilde{\zeta}_l^{(r)}\}_{l\in \mathbb{Z}}$ by
\begin{eqnarray}\label{zetadef}
\quad\left\{ \begin{array}{l} \overline \zeta_l^{(r)}\ =\ v_r^- \vee \zeta_l \wedge v_r^+\\
\tilde{\zeta}_l^{(r)}\ =\ \zeta_l - \overline \zeta_l^r
\end{array}\right. 
\end{eqnarray}
for $r \in \mathbb{N}$, we call $\overline\zeta_{l}^{(r)}$ the truncated terms and $\tilde\zeta_{l}^{(r)}$ the error terms. 
Observe that $\overline \zeta_l^{(r)}$ and $\tilde{\zeta}_l^{(r)}$ are both functions of $r$. Breaking $\zeta_{l}^{(r)}$ into $\overline\zeta_{l}^{(r)}$ and $\tilde{\zeta}_l^{(r)}$ also helps us break up $A_n(q,\lambda_q)\ $ as $\overline A_n^{(r)}(q,\lambda_q)\ +\ \tilde A_n^{(r)}(q,\lambda_q)$, where
\begin{eqnarray}\nonumber
\overline A_n^{(r)}(q,\lambda_q) &=& \sum_{k=1}^{n}\ \sum_{l_1\neq l_2\neq\hdots \neq l_q}\! \left(\prod_{r=1}^{q} c_{k-l_r}^{a_r}\!\right)\! \left(\overline \zeta_{l_1}^{(r)}\prod_{r=2}^{q}\xi_{l_r}^{a_r} - E\left(\overline \zeta_{l_1}^{(r)}\prod_{r=2}^{q}\xi_{l_r}^{a_r}\!\right)\!\right),
\end{eqnarray}
and $\tilde A_n^{(r)}(q,\lambda_q)$ is obtained by replacing $\overline \zeta_{l_1}^{(r)}$ with $\tilde\zeta_{l_1}^{(r)}$, in $\overline A_n^{(r)}(q,\lambda_q)$. Similarly, $T_n(i)$ (from (\ref{offdiagexpres})) can be broken up as $\overline T_n^{(r)}(i)\ +\ \tilde T_n^{(r)}(i)$, where
\begin{eqnarray}\nonumber
\overline T_n^{(r)}(i)\ =\ \sum_{k=1}^{n} \sum_{\substack{l_1,l_2,\hdots,l_{i+1}\\l_1\not\in\{l_2,\hdots,l_{i+1}\}}} \!\!\!\left(c_{k-l_1}^{s-i}\prod_{r=2}^{i+1}c_{k-l_r}\!\right)\! \left(\overline\zeta_{l_1}^{(r)}\prod_{r=2}^{i+1}\xi_{l_r} - E\left(\overline \zeta_{l_1}^{(r)}\prod_{r=2}^{i+1}\xi_{l_r}\!\right)\!\right),
\end{eqnarray}
and $\tilde T_n^{(r)}(q,\lambda_q)$ is obtained by replacing $\overline \zeta_{l_1}^{(r)}$ with $\tilde\zeta_{l_1}^{(r)}$, in $\overline T_n^{(r)}(q,\lambda_q)$.

\subsubsection{Bounding second moment of truncated terms}
\noindent
Recall that $\zeta_{l},\ \overline \zeta_{l}^{(r)},\ \tilde \zeta_{l}^{(r)},\ A_n(q,\lambda_q),\ \overline A_n^{(r)}(q,\lambda_q),\ \tilde A_n^{(r)}(q,\lambda_q),\ T_n(i),\ \overline T_n^{(r)}(i)$, and $\ \tilde T_n^{(r)}(i)$ are defined in terms of a fixed $i\in \left\{0,1,\hdots,\left\lfloor\frac{s-1}{2}\right\rfloor\right\}$. We now bound the second moments for the truncated terms, $\overline \zeta_{l}^{(r)}$.\\
Using (\ref{offdiagbound},\ref{zetadef}), and the formula
\begin{eqnarray}\label{expform}
E[g(X)]=\int_0^\infty g'(t)P(X>t)\ dt\ \ -\ \ \int_{-\infty}^0 g'(t)P(X<t)\ dt,
\end{eqnarray}
for continuously differentiable function $g$ and random variable $X$, we get that
\begin{eqnarray}\nonumber\label{barzetabound}
E[\overline \zeta_l^{(r)}]\ \ &=&\ \ \int_0^{v_r^+} P(\zeta_l > t)\ dt\ \ -\ \  \int_{v_r^-}^0 P(\zeta_l < t)\ dt
\\
&\leq&\ \ \int_0^{\infty} P(|\zeta_l| > t)\ dt\ \ \leq\ \ E|\zeta_l|\ \ \stackrel{r}{\ll}\ \ 1.
\end{eqnarray}
Also, by (\ref{uplus}) and (\ref{expform}), we have
\begin{eqnarray}\label{barzerabound}\nonumber
\hspace{-1cm}E\left[\left|\overline\zeta_l^{(r)}\right|^2\right]\ &=&\  E\left[|v_r^-\vee\zeta_l\wedge v_r^+|^2\right]
\\
&=&\ 2\int_0^{v_r^+}\! P(\zeta_l > s)s\ ds\ -\ 2\int_{v_r^-}^0\! P(\zeta_l < s)s\ ds\ \ \stackrel{r}{\ll}\ \ n^\kappa_r,
\end{eqnarray}
for all $r \in \mathbb{N}$. We shall now use (\ref{barzetabound}) and (\ref{barzerabound}) to bound the second moment of $\overline A_{n}^{(r)}(q,\lambda_q)$, in terms of $n_r^\kappa$. Recall that $\{\overline\zeta_l^{(r)}\}$ are i.i.d., and $E\left[\left|\overline\zeta_l^{(r)}\right|\right]<\infty$. Hence, taking $\psi_{l_1}^{(1)} = \overline \zeta_{l_1}^{(r)}$, $\psi_{l_r}^{(r)} = \xi_{l_r}^{a_r}\ $ for $\ 2\leq r\leq q$, and $\delta = n_r^\kappa\ $ in Lemma \ref{psilemma}, we see that $Y_{n',n,r}$ becomes $\overline A_n^{(r)}(q,\lambda_q)-\overline A_{n'}^{(r)}(q,\lambda_q)$, and
\begin{eqnarray}\label{neweq}\nonumber
&&E\left[\left(\overline A_n^{(r)}(q,\lambda_q)-\overline A_{n'}^{(r)}(q,\lambda_q)\right)^2\right]
\\
&\stackrel{n, r}{\ll} &
\left\{ \begin{array}{ll} n_r^{\kappa}(n-{n'}), &a_q \geq 2\\ n_r^{\kappa}(n-{n'})\ l_{s,\sigma}(n-{n'}), &a_1 = 1\\ (n_r^{\kappa}(n-{n'}))\vee \big((n-{n'})\ l_{1,\sigma}(n)\big),\ \ &a_q=1,\ a_1\geq 2\end{array}\right..
\end{eqnarray}
Recall that due to Remark \ref{alphanot2}, we have assumed that $s\geq 2$ and $0\leq i\leq \lfloor\frac{s-1}{2}\rfloor$. That gives us, $a_1 = s-i = s-\lfloor\frac{s-1}{2}\rfloor=\lceil\frac{s+1}{2}\rceil\geq 2$, so we discard the case $a_1 = 1$ in (\ref{neweq}). When $s=2$, the third case i.e. $a_q=1,\ a_1\geq 2$ is not possible. Hence, we get from (\ref{neweq}) and the fact that maximum of 2 numbers is upper bounded by their sum, that
\begin{eqnarray}\label{max5}\nonumber
&&E\left[\left(\overline A_n^{(r)}(q,\lambda_q)-\overline A_{n'}^{(r)}(q,\lambda_q)\right)^2\right]
\\
&\stackrel{n, r}{\ll} &
\left\{ \begin{array}{ll} n_r^{\kappa}(n-{n'}), &s=2\\ n_r^{\kappa}(n-{n'})\ + \big((n-{n'})\ l_{1,\sigma}(n-{n'})\big),\ \ &s\neq 2\end{array}\right..\qquad\qquad
\end{eqnarray}
Now, putting $n=n_r=2^{r}$ and ${n'}=0$ in (\ref{max5}), we get
\begin{eqnarray}\label{maxother}
E\left[\big(\overline A_{n_r}^{(r)}(q,\lambda_q)\big)^2\right]\ 
&\stackrel{r}{\ll} &\ 
\left\{ \begin{array}{ll} n_r^{1+\kappa}, &s=2\\ n_r^{1+\kappa}+ (n_r\  l_{1,\sigma}(n_r)),\ \ &s\neq 2\end{array}\right..\qquad\qquad
\end{eqnarray}
$\bullet\ $ Let $s\neq 2$. Then for $n_{r}\leq {n'} < n < n_{r+1}$, it follows from (\ref{max5}) and (\ref{maxother}), using Theorem \ref{stout} with $\ Z_i=\overline A_{i}^{(r)}(q,\lambda_q)-\overline A_{i-1}^{(r)}(q,\lambda_q)\ $ and $\ f(n)=n_r^{\kappa}n\ + \big(n\ l_{1,\sigma}(n)\big)$, that
\begin{eqnarray}\nonumber
E\left[\max_{n_{r}\le n<n_{r+1}}\left(\overline A_{n}^{(r)}(q,\lambda_q)\right)^2\right]\ 
&\stackrel{r}\ll&
\ r^2\big[n_r^{1+\kappa}+ \big(n_r\ l_{1,\sigma}(n_r)\big)\big] ,
\end{eqnarray}
which when summed up over all $q$ and over all partitions $\lambda_q$ with $a_1 = s-i$ (recall that $\ i\in \left\{0,1,\hdots,\left\lfloor\frac{s-1}{2}\right\rfloor\right\}$ is fixed), gives us
\begin{eqnarray}\label{maxtrunc1.5}
E\left[\max_{n_{r}\le n<n_{r+1}}\left(\overline T_n^{(r)}(i)\right)^2\right]\ \stackrel{r}\ll\ r^2\Big[n_r^{1+\kappa}\vee\ \big(n_r\ l_{1,\sigma}(n_r)\big)\Big] ,
\end{eqnarray}
since the sum of two functions is upper bounded by twice their maximum.\\
$\bullet\ $ Now, let $s=2$. Then a similar calculation as in the case $s \neq 2$, gives us that
\begin{eqnarray}\label{maxtrunc2}
E\left[\max_{n_{r}\le n<n_{r+1}}\left(\overline T_n^{(r)}(i)\right)^2\right]\ \stackrel{r}\ll\ r^2n_r^{1+\kappa}.
\end{eqnarray}
$\bullet\ $ Finally, we consider the situation where s is even, and $\xi_l$ is symmetric. Clearly $\xi_{l}^{a_j}$ will be symmetric when $a_j$ is odd, implying that $E\left(\xi_{l}^{a_j}\right)=0$ for odd $a_j,\ 2\leq j\leq q$. Also, since $a_1 = s-i$, we see that $\xi_{l}^{a_1}$ will be symmetric when $a_1$ is odd, implying that both $\zeta_l$ and $\overline \zeta_l^{(r)}$ will be symmetric. Hence, proceeding as in the case $s\neq 2$ again, gives us that
\begin{eqnarray}\label{maxtrunc3}
E\left[\max_{n_{r}\le n<n_{r+1}}\left(\overline T_n^{(r)}(i)\right)^2\right]\ \stackrel{r}\ll\ r^2\Big[n_r^{1+\kappa}\vee\ \big(n_r\ l_{2,\sigma}(n_r)\big)\Big],
\end{eqnarray}
which is clearly an improvement over (\ref{maxtrunc1.5}), since the function $l_{2,\sigma}\ \leq\ l_{1,\sigma}$.\\

\subsubsection{Bounding $\tau$th moment of error terms, $\tau\in(1,\alpha_i)$}
\noindent
Taking $1<z<\alpha_i$, and using our tail probability bound in (\ref{offdiagbound}) along with (\ref{expform}), we have that
\begin{eqnarray}\nonumber
E\left|\left(\tilde \zeta_1^{(r)}\right)^+\right|^z
&=&
z\int_0^\infty s^{z-1}P\left(\zeta_1^{(r)} - (\zeta_1^{(r)} \wedge v_r^+) > s\right)\ ds
\\ \nonumber
&=&
z\int_0^\infty s^{z-1}P\left(\zeta_1^{(r)} > v_r^+ + s\right)\ ds
\\ \nonumber
&\stackrel{r}{\ll}& 
\int_{v_r^+}^\infty (s-v_r^+)^{z-1}s^{-{\alpha_i}}\ ds
\\ \nonumber
&\le& (v_r^+)^{-\alpha_i}\int_{v_r^+}^{2v_r^+}(s-v_r^+)^{z-1}\ ds\ \ +\ \ \int_{2v_r^+}^\infty (s-v_r^+)^{z-\alpha_i-1}\ ds
\\\nonumber
&\stackrel{r}{\ll}& (v_r^+)^{z-{\alpha_i}}\ \ 
\stackrel{r}{\ll}\ \ n_r^{\frac{\kappa(z-{\alpha_i})}{2-\alpha_i}}.
\end{eqnarray}
By symmetry $E\left|\left(\tilde \zeta_1^{(r)}\right)^-\right|^z$ has the same bound so for $1<z<\alpha_i$, we get that
\begin{eqnarray}\label{momboundtilde}
\|\tilde \zeta_1^{(r)}\|_z\ \ \stackrel{r}{\ll}\ \ n_r^{\frac{\kappa(z-{\alpha_i})}{z(2-\alpha_i)}}.
\end{eqnarray}
Now, we explore the convergence rates of $\tilde T_n^{(r)}(i)$. Note that
\begin{eqnarray}\label{betaterm0.5}
\!\!\!\!\!\!\!\tilde T_n^{(r)}(i)\ =\sum_{k=1}^{n} \sum_{\substack{l_1,l_2,\hdots,l_{i+1}\\l_1\not\in\{l_2,\hdots,l_{i+1}\}}} \!\!\!\!\!\left(\!c_{k-l_1}^{s-i}\prod_{r=2}^{i+1}c_{k-l_r}\!\right)\! \!\left(\!\tilde\zeta_{l_1}^{(r)}\prod_{r=2}^{i+1}\xi_{l_r} - E\left(\!\tilde\zeta_{l_1}\prod_{r=2}^{i+1}\xi_{l_r}\!\right)\!\right)\!.
\end{eqnarray}
Replacing $l_j$ with $k-l_j$ for all $1\leq j\leq i+1$ in (\ref{betaterm0.5}), and taking
$$X_n\ =\ \sum_{k=1}^{n}\ \sum_{\substack{l_1,l_2,\hdots,l_{i+1}\\l_1\not\in\{l_2,\hdots,l_{i+1}\}}} \left(c_{k-l_1}^{s-i}\prod_{r=2}^{i+1}c_{k-l_r}\right) \left(\tilde\zeta_{l_1}^{(r)}\prod_{r=2}^{i+1}\xi_{l_r}\right)$$
in Lemma \ref{reducexpec}, with $z = \tau \in (1,2)$, we get that
\begin{eqnarray}\label{betaterm1}\nonumber
\!\!\!\!&&
E^\frac{1}{\tau}\!\!\left[\sup\limits_{n_r\le n< n_{r+1}}\left|\tilde T_n^{(r)}(i)\right|^\tau\right]
\\ \nonumber
\!\!\!\!&\stackrel{r}{\ll}&
E^\frac{1}{\tau}\!\!\left[\sup\limits_{n_r\le n< n_{r+1}}\!\left|\sum_{k=1}^{n}\ \sum_{\substack{l_1,l_2,\hdots,l_{i+1}\\l_1\not\in\{l_2,\hdots,l_{i+1}\}}} \left(c_{l_1}^{s-i}\prod_{r=2}^{i+1}c_{l_r}\right) \left(\tilde\zeta_{k-l_1}^{(r)}\prod_{r=2}^{i+1}\xi_{k-l_r}\right)\right|^\tau\  \right]
\\
\!\!\!\!&\leq&
E^\frac{1}{\tau}\!\!\left[\sup\limits_{n_r\le n< n_{r+1}}\!\left|\sum_{k=1}^{n}\ \sum_{l_1=-\infty}^{\infty} \left|c_{l_1}^{s-i}\tilde\zeta_{k-l_1}^{(r)}\right|\ \left|\sum_{l\in\mathbb{Z}\setminus\{l_1\}}c_l \xi_{k-l}\right|^i\ \right|^\tau\ \right].
\end{eqnarray}
\begin{eqnarray}\label{phidef}
\hspace{-4cm}\mbox{Define}\hspace{2.65cm}\phi_{k,q}\ =\ \left|\sum_{l\in \mathbb{R}\setminus\{q\}} c_l \xi_{k-l}\right|^i.
\end{eqnarray}
Noting that $\sum_{m\in\mathbb{Z}} |c_m^{s-i}| < \infty$ because $s-i \geq 2$, then using Jensen's inequality due to convexity of norms, we see that RHS of (\ref{betaterm1}) is upper bounded by
\begin{eqnarray}\nonumber\label{betatermhalf}
\!\!\!&&\ \ 
E^\frac{1}{\tau}\!\left[\ \left| \sum_{l_1=-\infty}^\infty\!\!\left|c_{l_1}^{s-i}\right|\sup\limits_{n_r\le n< n_{r+1}}\left(\sum_{k=1}^{n}\ \left|\tilde\zeta_{k-l_1}^{(r)}\right||\phi_{k,l_1}|\ \right)\right|^\tau\ \right]
\\ \nonumber
\!\!\!&=&\ 
\sum_{m=-\infty}^\infty\!|c_m^{s-i}|\ \
E^\frac{1}{\tau}\!\left[\ \left|\sum_{l_1 = -\infty}^{\infty}\frac{|c_{l_1}^{s-i}|}{\sum_{m} |c_m^{s-i}|}\sup\limits_{n_r\le n< n_{r+1}}\left(\sum_{k=1}^{n}\  \left|\tilde\zeta_{k-l_1}^{(r)}\right||\phi_{k,l_1}|\right)\right|^\tau\ \right]
\\
\!\!\!&\leq&\ 
\sum_{l_1=-\infty}^\infty\!|c_{l_1}^{s-i}|\ \ E^\frac{1}{\tau}\! \left[\sup\limits_{n_r\le n< n_{r+1}}\left|\sum_{k=1}^{n}\  \left|\tilde\zeta_{k-l_1}^{(r)}\right||\phi_{k,l_1}|\ \right|^\tau\ \right].
\end{eqnarray}
\textbf{Case 1: $i\geq 1$.} In this case, note that $\tau i<s$ follows since $i < \lfloor\frac{s-1}{2}\rfloor,\ \tau<2$. Then, by two applications of H\"older's inequality with $p_1 = \frac{s}{s-\tau i}\ $ and $\ p_2 = \frac{s}{\tau i}$ (both of which are positive, and their reciprocals sum to one), we get that the RHS of (\ref{betatermhalf}) is upper bounded by
\begin{eqnarray}\label{betaterm3}\nonumber
\!\!\!\!&&
\sum_{l_1=-\infty}^\infty\!|c_{l_1}^{s-i}|\  E^\frac{1}{\tau}\left[\sup\limits_{n_r\le n< n_{r+1}}\left|\sum_{k=1}^n \left|\tilde \zeta_{k-l_1}^{(r)}\right|^\frac{s}{s-\tau i} \right|^\frac{\tau(s-\tau i)}{s}\left|\sum_{j=1}^n \left|\phi_{j,l_1}\right|^\frac{s}{\tau i}\right|^\frac{\tau^2i}{s}\right]
\\ \nonumber
\!\!\!\!&\stackrel{r}{\ll}&\!
\sum_{l_1\in\mathbb{Z}}\!|c_{l_1}^{s-i}| E^\frac{s-\tau i}{s\tau}\!\!\left[\!\sup\limits_{n_r\le n< n_{r+1}}\!\left|\sum_{k=1}^n \left|\tilde \zeta_{k-l_1}^{(r)} \right|^\frac{s}{s-\tau i}\right|^\tau\right]\!E^\frac{i}{s}\!\!\left[\!\sup\limits_{n_r\le n< n_{r+1}}\! \left|\sum_{j=1}^n \left|\phi_{j,l_1}\right|^\frac{s}{\tau i}\right|^\tau \right]\!.
\end{eqnarray}
Since $\ \frac{s}{s-\tau i}\ $ and $\ \frac{s}{\tau i}\ $ are positive, we find that both $\ \sum_{k=1}^n \left|\tilde \zeta_{k-l_1}^{(r)} \right|^\frac{s}{s-\tau i}\ $ and $\ \sum_{j=1}^n \left|\phi_{j,l_1}\right|^\frac{s}{\tau i}\ $ are non-negative submartingales, which is shown in Shiryaev \cite[Page~475, Example~4]{Shiryaev}. Thus, using Doob's $L_p$ maximal inequality (see \cite[Page~493, Theorem~4]{Shiryaev}), and then Jensen's inequality (since $\tau > 1$), we get that
\begin{eqnarray}\nonumber\label{betaterm4}
\!\!\!\!\!\!&&
E^\frac{1}{\tau}\!\!\left[\sup\limits_{n_r\le n< n_{r+1}}\left|\tilde T_n^{(r)}(i)\right|^\tau\right]
\\ \nonumber
\!\!\!\!\!\!&\stackrel{r}{\ll}&
\sum_{l_1=-\infty}^\infty\!|c_{l_1}^{s-i}|\ E^\frac{s-\tau i}{s\tau}\left[ \left|\sum_{k=1}^{n_{r+1}-1}\left|\tilde \zeta_{k-l_1}^{(r)} \right|^\frac{s}{s-\tau i}\right|^\tau\right]E^\frac{i}{s}\left[ \left|\sum_{j=1}^{n_{r+1}-1} \left|\phi_{j,l_1}\right|^\frac{s}{\tau i}\right|^\tau\right]
\\ \nonumber
\!\!\!\!\!\!&\stackrel{r}{\ll}&
\sum_{l_1=-\infty}^\infty\!|c_{l_1}^{s-i}|\ 
E^\frac{s-\tau i}{s\tau}\!\left[\!(n_{r+1}-1)^{\tau-1}\!\sum_{k=1}^{n_{r+1}-1}\! \left|\tilde \zeta_{k-l_1}^{(r)} \right|^\frac{s\tau}{s-\tau i}\!\right]
\\
\!\!\!\!\!\!&&\hspace{3cm}\times
E^\frac{i}{s}\!\left[\!(n_{r+1}-1)^{\tau-1}\!\sum_{j=1}^{n_{r+1}-1} \!\left|\phi_{j,l_1}\right|^\frac{s}{i}\!\right].
\end{eqnarray}
Lemma \ref{dataexpec} directly implies that $\sup_{l_1\in\mathbb{Z}}\|\phi_{1,l_1}\|_{\frac{s}{i}}<\infty$. Since $s-i\geq 2$, $\{\tilde\zeta_{l}^{(r)}\}_{l\in\mathbb{Z}}$ are i.i.d., as are $\{\phi_{j,l_1}\}_{j\in\mathbb{N}}$, we get from (\ref{betaterm4}) that
\begin{eqnarray}\label{betatermend}\nonumber
&&\ 
E^\frac{1}{\tau}\!\left[\sup\limits_{n_r\le n< n_{r+1}}\left|\tilde T_n^{(r)}(i)\right|^\tau\right]
\\ \nonumber
&\stackrel{r}{\ll}&\ 
\sum_{l_1=-\infty}^\infty\!|c_{l_1}^{s-i}|\ 
E^\frac{s-\tau i}{s\tau}\left[(n_{r+1}-1)^{\tau} \left|\tilde \zeta_{1}^{(r)} \right|^\frac{s\tau}{s-\tau i}\right]
E^\frac{i}{s}\left[(n_{r+1}-1)^{\tau}\left|\phi_{1,l_1}\right|^\frac{s}{i}\right]
\\
&\stackrel{r}{\ll}&\ 
\sum_{l_1=-\infty}^\infty\!|c_{l_1}^{s-i}|\ n_r\left\|\tilde \zeta_{1}^{(r)}\right\|_{\frac{s\tau}{s-\tau i}}\|\phi_{1,l_1}\|_{\frac{s}{i}}
\ \ \stackrel{r}{\ll}\ \
n_r\|\tilde \zeta_{1}^{(r)}\|_{\frac{s\tau}{s-\tau i}}\ .
\end{eqnarray}
\textbf{Case 2: $i = 0$}. In this case, we get that $|\phi_{k,l_1}|=1$, and from (\ref{betatermhalf}), we get that
$$E^\frac{1}{\tau}\!\!\left[\sup\limits_{n_r\le n< n_{r+1}}\left|\tilde T_n^{(r)}(i)\right|^\tau\right]\ \stackrel{r}{\ll}\ \sum_{l_1=-\infty}^\infty\!|c_{l_1}^{s}|\ \ E^\frac{1}{\tau}\! \left[\sup\limits_{n_r\le n< n_{r+1}}\left|\sum_{k=1}^{n}\  \left|\tilde\zeta_{k-l_1}^{(r)}\right|\ \right|^\tau\ \right].$$
Again, using Doob's $L_p$ maximal inequality, Jensen's inequality, the fact that $\ \sum_{k=1}^n \left|\tilde \zeta_{k-l_1}^{(r)} \right|$ is a non-negative submartingale, and that $\{\tilde\zeta_{l}^{(r)}\}_{l\in\mathbb{Z}}$ are i.i.d., we proceed as in (\ref{betatermend}) to get
\begin{eqnarray}\nonumber\label{betatermextra}
E^\frac{1}{\tau}\left[\sup\limits_{n_r\le n< n_{r+1}}\left|\tilde T_n^{(r)}(i)\right|^\tau\right]
&\stackrel{r}{\ll}&
\sum_{l_1=-\infty}^\infty|c_{l_1}^{s}|\ 
E^\frac{1}{\tau}\!\!\left[(n_{r+1}-1)^{\tau-1}\sum_{k=1}^{n_{r+1}-1} \left|\tilde \zeta_{k-l_1}^{(r)} \right|^{\tau}\right]
\\ 
&\stackrel{r}{\ll}&
\ n_r\|\tilde \zeta_{1}^{(r)}\|_{\tau}\ .
\end{eqnarray}
Thus, for all $\ i\in \left\{0,1,\hdots,\left\lfloor\frac{s-1}{2}\right\rfloor\right\}$, we get from (\ref{betatermend}) and (\ref{betatermextra}), that
\begin{eqnarray}\label{betaterm}
E^\frac{1}{\tau}\left[\sup\limits_{n_r\le n< n_{r+1}}\left|\tilde T_n^{(r)}(i)\right|^\tau\right]\ \stackrel{r}{\ll}\ n_r\|\tilde \zeta_{1}^{(r)}\|_{\frac{s\tau}{s-\tau i}}\ .
\end{eqnarray}
Now, we choose $\tau>1$ small enough so that $\alpha_i > \frac{s\tau}{s-\tau i}$, which is possible since $\alpha_i = \frac{s}{s-i}\alpha_0 > \frac{s}{s-i}$, and $\frac{s\tau}{s-\tau i}$ is continuous and increasing for $\tau\in(1,\alpha_i)$. Hence by (\ref{momboundtilde}) with $z = \frac{s\tau}{s-\tau i}$ and (\ref{betaterm}), there exists $\mathcal{T}_i\in(1,\alpha_i)\ $ such that $\ \forall\ \tau\in(1,\mathcal{T}_i)$,
\begin{eqnarray}\label{remmoment}
E\left[\sup\limits_{n_r\le n< n_{r+1}}
\left|\tilde T_n^{(r)}(i)\right|^\tau \right]\ \ \stackrel{r}{\ll}\ \ 
n_r^{\tau-\frac{\kappa(\alpha_i-\frac{s\tau}{s-\tau i})}{\frac{s}{s-\tau i}(2-\alpha_i)}}.
\end{eqnarray}

\subsection{Final Rate of Convergence for Theorem \ref{mostgen}}
\noindent
Finally, we shall use the Borel-Cantelli Lemma to combine the results of the last two sections and prove Theorem \ref{mostgen}. Notice that in $\sum_{k=1}^n (d_k-d)$ (from Theorem \ref{mostgen}), the light-tailed terms are $S_n(q,\lambda_q)$ (from (\ref{sqlambdaq})) over all partitions where $a_1\leq \frac{s}{2}$, since their second moments are finite. The heavy-tailed terms are $S_{n}^\star(i)$ (from (\ref{sstar})) over $i\in\left\{0,1,\hdots,\left\lfloor\frac{s-1}{2}\right\rfloor\right\}$. We thus have
\begin{eqnarray}\label{final}
\sum_{k=1}^n (d_k-d)\ =\! \sum_{\substack{\lambda_q=(a_1,\hdots,a_q)\\a_1\leq\frac{s}{2}}}S_n(q,\lambda_q)\ +\! \sum_{i\in\left\{0,1,\hdots,\left\lfloor\frac{s-1}{2}\right\rfloor\right\}}S_{n}^\star(i)\ .\qquad
\end{eqnarray}
$\bullet\ $ First, we handle the light-tailed terms. In Lemma \ref{psilemma}, taking $\psi_{l_r}^{(r)} = \xi_{l_r}^{a_r}\ $ for $\ 1\leq r\leq q$, and$\ \delta = 1$, we see that $Y_{n',n,\delta}^{\lambda_q}$ becomes $S_n(q, \lambda_q)-S_{n'}(q, \lambda_q)$, and we get the same results as in (\ref{res}) and (\ref{res1}). Thus, proceeding along the lines of (\ref{max0}\ -\ \ref{sconv3}), we get that
\begin{eqnarray}\label{lt}
\lim_{n \rightarrow \infty}{n^{-\frac{1}p}}{S_n(q,\lambda_q)}\ =\ 0 \qquad\qquad \mbox{a.s.}
\end{eqnarray}
for the values of $p$ as mentioned in (\ref{longonlyp},\ref{longonlyp2}), in the statement of Theorem \ref{longonlygen}.\\
$\bullet\ $ Now we deal with the heavy-tailed terms. We fix $i\in\left\{0,1,\hdots,\left\lfloor\frac{s-1}{2}\right\rfloor\right\}$, which fixes $S_{n}^\star(i)$, and due to (\ref{offdiagexpres}), consider $T_n(i)$ instead of $S_{n}^\star(i)$. First, we consider the case where $s>2$. From (\ref{maxtrunc1.5},\ref{remmoment}), Markov's Inequality, and the fact that $l_{1,\sigma}(n_r)=n_r^{2-2\sigma}$ (since $\sigma<1$), we get that, there exists $\mathcal{T}_i$ such that $\ \forall\ 1<\tau<\mathcal{T}_i$, 
\begin{eqnarray}\label{Pmaxzeta}\nonumber
&&\ P\left(\sup\limits_{n_r\le n< n_{r+1}}
\left|T_n(i)\right|>2\epsilon n_r^\frac{1}p\right)
\\ \nonumber
&\le &\
\frac{1}{\epsilon^2 n_r^\frac2p}E\left[\sup\limits_{n_{r}\le n<n_{r+1}}\left|\overline T_n^{(r)}(i)\right|^{2}\right]
\ \ +\ \
\frac{1}{\epsilon^\tau n_r^\frac{\tau}{p}}E\left[\sup\limits_{n_{r}\le n<n_{r+1}}\left|\tilde T_n^{(r)}(i)\right|^\tau\right]
\\ \nonumber
&\stackrel{r}{\ll}&\ 
r^2 \left[\left(n_r^{1-\frac2p}l_{1,\sigma}(n_r)\right)\vee\left( n_r^{1+\kappa-\frac2p}\right)\right]\ \ +\ \ n_r^{\tau-\frac{\kappa(\alpha_i-\frac{s\tau}{s-\tau i})}{\frac{s}{s-\tau i}(2-\alpha_i)}-\frac{\tau}{p}}
\\
&\stackrel{r}{\ll}&\ 
r^2 \left[\left(n_r^{3-2\sigma-\frac2p}\right)\vee\left(n_r^{1-\frac{\alpha_i}{p}}\right)\right]\ \ +\ \ n_r^{\tau-\frac{\alpha_i(s-\tau i)}{ps}},
\end{eqnarray}
by letting $\kappa = \frac{2-\alpha_i}p$. Note that $(3-2\sigma-\frac2p)\vee (1-\frac{\alpha_i}{p}) < 0\ $ implies that $\ p < \alpha_i \wedge \frac{2}{3-2\sigma}$. Next, note that $\tau - \frac{(s-\tau i)\alpha_i}{ps} < 0\ $ if and only if $\ p < \alpha_i \left(\frac{s-\tau i}{s\tau}\right)$. But for any $p < \alpha_0 = \alpha_i\left(\frac{s-i}s\right)$, we select $\tau>1$ small enough such that $p < \alpha_i\left(\frac{s-\tau i}{s\tau}\right)$. Hence, from (\ref{Pmaxzeta}), we get that $\sum_{r=1}^\infty P\left(\sup\limits_{n_r\le n< n_{r+1}}
\left|T_n(i)\right| > 2\epsilon n_r^\frac{1}p\right) <\infty,$ for
\begin{eqnarray}\label{Psumzeta}
p < \alpha_0 \wedge \frac{2}{3-2\sigma}.
\end{eqnarray}
$\bullet\ $ When $s=2$, using (\ref{maxtrunc2}, \ref{remmoment}), proceeding along the lines of (\ref{Pmaxzeta}, \ref{Psumzeta}), we get that $\sum_{r=1}^\infty P\left(\sup\limits_{n_r\le n< n_{r+1}}
\left|T_n(i)\right| > 2\epsilon n_r^\frac{1}p\right) <\infty,$ for
\begin{eqnarray}\label{Psumzeta1}
p < \alpha_0.
\end{eqnarray}
$\bullet\ $ Lastly, when s is even, and $\xi_1$ is symmetric, using (\ref{maxtrunc3}, \ref{remmoment}), and again proceeding along the lines of (\ref{Pmaxzeta}, \ref{Psumzeta}), we get that\\$\sum_{r} P\left(\sup\limits_{n_r\le n< n_{r+1}}
\left|T_n(i)\right| > 2\epsilon n_r^\frac{1}p\right) <\infty,$ for
\begin{eqnarray}\label{Psumzeta2}
p < 2\wedge\alpha_0\wedge\frac{1}{2-2\sigma}.
\end{eqnarray}
Hence, for the values of $p$ in (\ref{Psumzeta}, \ref{Psumzeta1}, \ref{Psumzeta2}), from the Borel-Cantelli Lemma, we get that
\begin{eqnarray}\label{ht}
\lim_{n \rightarrow \infty}{n^{-\frac{1}p}}{T_n(i)} = 0\quad\mbox{a.s.},\qquad \mbox{and hence}\quad 
\lim_{n \rightarrow \infty}{n^{-\frac{1}p}}{S_{n}^\star(i)} = 0\quad\mbox{a.s.},
\end{eqnarray}
due to (\ref{sstar}, \ref{offdiagexpres}). From (\ref{lt}, \ref{ht}) and Remark \ref{sigmasless1}, we get that
$$\lim\limits_{n\rightarrow\infty}{n^{-\frac{1}{p}}}\sum_{k=1}^n (d_k-d)=0\qquad\mbox{a.s.}\ ,$$
for the values of $p$ in the statement of Theorem \ref{mostgen}. 
This proves Theorem \ref{mostgen}. \hfill$\Box$

\begin{remark}\label{difffromgen}
     Here we underline the notational changes that would have to be made to prove the case where all the innovations and the LRD coefficients are allowed to be unequal (see Remark \ref{sigmasless1}). In (\ref{sqlambdaq}), our decomposition will require partitions of $\{1,2,\hdots,s\}$ instead of $s$. Recalling (\ref{linprocess}) in General $\mathbb{R}$-valued product case, we define $S_n(q,\lambda_q)$ as
    \begin{eqnarray}\nonumber
	   \sum_{k=1}^{n}\ \sum_{l_1\neq l_2\neq \hdots \neq l_q} \left(\prod_{r=1}^{q}\prod_{w\in A_r} c_{k-l_r}^{(w)}\right) \left(\prod_{r=1}^{q}\prod_{w\in A_r}\xi_{l_r}^{(w)} - E\left(\prod_{r=1}^{q}\prod_{w\in A_r}\xi_{l_r}^{(w)}\right)\right),
	\end{eqnarray}
	where $q$ ranges over $\{1,2,\hdots,s\}$, and $\lambda_q = (A_1, A_2, \hdots, A_q)$ is a decreasing partition of the set $\{1,2,\hdots,s\}$, i.e. it satisfies $\bigcup_{r=1}^q A_r = \{1,2,\hdots,s\}$, $\sum_{r=1}^q |A_r| = s$, and $|A_1| \geq \hdots \geq |A_q|\geq 1$. For the entirety of the proof,  $\prod_{w\in A_r} c_{k-l_r}^{(w)}$ and $\prod_{w\in A_r}\xi_{l_r}^{(w)}$ act as proxies for $c_{k-l_r}^{a_r}$ and $\xi_{l_r}^{a_r}$ respectively, but because of (Tail) and (Decay), the steps remain the same. The proofs of Lemmas \ref{covlemma}, \ref{psilemma}, and the heavy-tailed portion also go through with notational changes, the one exception being that instead of using Lemma \ref{inequality} as stated, we use the slightly modified bound stated here
	$$\sum_{l\in\mathbb{R}\setminus\{j,k\}}|j-l|^{-\gamma_1} |k-l|^{-\gamma_2}\ \stackrel{j,k}{\ll}\ |j-k|^{1-\gamma_1-\gamma_2},\quad \text{where}\ \gamma_1,\gamma_2 \in \left(\frac12,1\right).$$
	That makes one of the expressions in the bound for $p$ change from $2\sigma$ to $\min_{1\leq i\leq j\leq s}\{\sigma_i+\sigma_j\}$, in the statement of Theorem \ref{mostgen}.
\end{remark}

\appendix

\section{Technical Lemmas}\label{apptech}
\noindent
The following simple lemmas are used in some of the proofs of our paper. The proofs of Lemmas \ref{inequality} and \ref{newineq} are provided in the supplementary materials. Please recall that notation like $\stackrel{j,k}{\ll}$ is explained in our notation list in Section \ref{list}.
\begin{lemma}\label{inequality}
	For $j, k\in\mathbb Z$, $j \neq k$ and $\gamma > \frac12$, we have,
	$$\sum_{\substack{l=-\infty\\l\not\in \{j,k\}}}^{\infty} |j-l|^{-\gamma} |k-l|^{-\gamma} \quad \stackrel{j,k}{\ll} \quad
	\left\{ \begin{array}{ll} |j-k|^{1-2\gamma}, & \gamma\in \left(\frac12,1\right)\\ |j-k|^{-1} \log(|j-k|+1),\quad & \gamma = 1\\|j-k|^{-\gamma},& \gamma >1\end{array}\right..$$
\end{lemma}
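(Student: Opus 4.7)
\smallskip
\noindent\textbf{Proof plan for Lemma \ref{inequality}.}
By symmetry in $(j,k)$ I may assume $j<k$ and write $d=k-j\in\mathbb{N}$. The plan is to split the index set $\mathbb{Z}\setminus\{j,k\}$ into the three natural regions
$\{l<j\},\ \{j<l<k\},\ \{l>k\}$, bound each contribution separately, and observe that the three bounds coincide with the right-hand side of the claim. The two outer regions are related by the change of variables $l\mapsto j+k-l$, so after shifting they yield the same type of tail, and only the middle region and one outer region need genuine work.

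\smallskip
\noindent\textbf{Middle region.}
After substituting $m=l-j$, the inner sum becomes $\sum_{m=1}^{d-1} m^{-\gamma}(d-m)^{-\gamma}$. By the symmetry $m\leftrightarrow d-m$ it is at most $2\sum_{m=1}^{\lfloor d/2\rfloor} m^{-\gamma}(d-m)^{-\gamma}$, and since $d-m\ge d/2$ there, this is bounded by $2(d/2)^{-\gamma}\sum_{m=1}^{\lfloor d/2\rfloor}m^{-\gamma}$. Standard $p$-series estimates give $\sum_{m=1}^{\lfloor d/2\rfloor}m^{-\gamma}\stackrel{d}{\ll} d^{1-\gamma}$ when $\gamma\in(\tfrac12,1)$, $\ll\ln(d+1)$ when $\gamma=1$, and is bounded when $\gamma>1$. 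Multiplying by $d^{-\gamma}$ produces $d^{1-2\gamma},\ d^{-1}\ln(d+1),\ d^{-\gamma}$ respectively, matching the three cases.

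\smallskip
\noindent\textbf{Outer region.}
For $l>k$, substitute $m=l-k\ge 1$ to obtain $\sum_{m\ge 1}(m+d)^{-\gamma}m^{-\gamma}$, and split this at $m=d$. For $m\le d$ use $m+d\ge d$ to bound the summand by $d^{-\gamma}m^{-\gamma}$, producing again $d^{-\gamma}$ times the same $p$-series tail handled above. For $m>d$ use $m+d\le 2m$, so the tail is $\ll\sum_{m>d}m^{-2\gamma}\ll d^{1-2\gamma}$ (finite since $2\gamma>1$). Comparing $d^{1-2\gamma}$ with the dominant first piece in each regime shows that the outer region contributes $\ll d^{1-2\gamma}$ when $\gamma<1$, $\ll d^{-1}$ when $\gamma=1$ (absorbed into $d^{-1}\ln(d+1)$), and $\ll d^{-\gamma}$ when $\gamma>1$ (since then $1-2\gamma<-\gamma$ and $d\ge 1$). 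The region $l<j$ is handled identically by the reflection $l\mapsto j+k-l$, and adding the three contributions yields the claimed bound.

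\smallskip
\noindent\textbf{Obstacles.}
There is no deep difficulty: the argument is bookkeeping for $p$-series with three regimes. The only point requiring care is the boundary case $\gamma=1$, where the middle sum produces the genuine $\ln(d+1)$ factor and one must verify the outer region does not exceed it, and the case $\gamma>1$, where one needs the comparison $1-2\gamma<-\gamma$ to fold the outer tail into the $d^{-\gamma}$ bound. Both are elementary inequalities.
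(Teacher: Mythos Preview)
Your argument is correct and complete. One tiny wording slip: in the outer tail $m>d$ you write ``use $m+d\le 2m$'', but the inequality you actually need for an \emph{upper} bound is $m+d\ge m$, giving $(m+d)^{-\gamma}m^{-\gamma}\le m^{-2\gamma}$; the conclusion $\sum_{m>d}m^{-2\gamma}\ll d^{1-2\gamma}$ is unaffected.

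Your route differs from the paper's. The paper treats the three $\gamma$-regimes by three separate devices: for $\gamma\in(\tfrac12,1)$ it passes to an integral approximation and recognises a Beta integral $\int_0^1(1-s)^{-\gamma}s^{-\gamma}\,ds$; for $\gamma=1$ it uses the partial-fraction identity $(j-l)^{-1}(k-l)^{-1}=(j-k)^{-1}\bigl[(k-l)^{-1}-(j-l)^{-1}\bigr]$ to telescope; and for $\gamma>1$ it splits at the midpoint $\lfloor(j+k)/2\rfloor$ and uses summability of $|l|^{-\gamma}$. Your approach is more uniform: a single three-region decomposition plus dyadic splitting of the outer tail, with the $\gamma$-dependence appearing only through the standard $p$-series trichotomy $\sum_{m\le N}m^{-\gamma}\ll N^{1-\gamma},\ \ln N,\ 1$. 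This buys you a cleaner, purely discrete argument with no integrals or special-function identities; the paper's approach, on the other hand, makes the $\gamma=1$ case exact (via telescoping) rather than merely bounded, and the Beta-integral picture clarifies why the constant is finite for all $\gamma\in(\tfrac12,1)$ simultaneously.
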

The following lemma now follows directly by Lemma \ref{inequality}.
\begin{lemma}\label{newineq}
	For $j, k\in\mathbb Z$, $j \neq k$ and $\gamma \in \left(\frac12,1\right)$, we have,
	$$\sum_{\substack{l=-\infty\\l\not\in \{j,k\}}}^{\infty} |j-l|^{-\gamma} |k-l|^{-2\gamma} \qquad \stackrel{j,k}{\ll} \qquad |j-k|^{-\gamma}\ .$$
\end{lemma}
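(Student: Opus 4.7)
The plan is to mimic the region-splitting strategy of Lemma \ref{inequality}, but to exploit that, since $2\gamma > 1$, the series $\sum_m m^{-2\gamma}$ is absolutely summable, while $\gamma < 1$ still allows the partial sum $\sum_{m=1}^{M} m^{-\gamma}$ to grow like $M^{1-\gamma}$. Without loss of generality I assume $j > k$ and set $d = j - k \geq 1$; the goal is a bound of order $d^{-\gamma}$.

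I will partition $\{l \in \mathbb{Z} : l \neq j, k\}$ into three regions according to which of $k$ or $j$ the index $l$ is closer to: Region A, where $1 \leq |k-l| \leq d/2$ (so $|j-l| \geq d/2$); Region B, where $1 \leq |j-l| \leq d/2$ (so $|k-l| \geq d/2$); and Region C, where both $|k-l| > d/2$ and $|j-l| > d/2$. In Region A, I pull out $|j-l|^{-\gamma} \leq (d/2)^{-\gamma}$ and use $\sum_{m \geq 1} m^{-2\gamma} < \infty$ to conclude that the contribution is $\stackrel{j,k}{\ll} d^{-\gamma}$. In Region B, I pull out $|k-l|^{-2\gamma} \leq (d/2)^{-2\gamma}$ and use $\sum_{m=1}^{\lfloor d/2 \rfloor} m^{-\gamma} \stackrel{d}{\ll} d^{1-\gamma}$ (since $\gamma < 1$), producing a contribution of order $d^{1-3\gamma}$. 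In Region C, I note that $|k-l| \leq |j-l| + d < 3|j-l|$ (using $|j-l| > d/2$), so the summand is dominated by a constant times $|k-l|^{-3\gamma}$, and $\sum_{m > d/2} m^{-3\gamma} \stackrel{d}{\ll} d^{1-3\gamma}$ because $3\gamma > 1$.

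It then remains to observe that $1 - 3\gamma \leq -\gamma$ follows from $\gamma \geq 1/2$, so for $d \geq 1$ each regional bound is dominated by $d^{-\gamma}$, and combining gives the desired estimate. The edge case $d = 1$, in which Regions A and B are empty, is covered automatically by the Region C estimate. There is no real obstacle; the only point requiring attention is to pair correctly the factor that is bounded trivially by its minimum on the region with the factor that is summed, so that the asymmetric exponents $\gamma$ and $2\gamma$ are exploited optimally.
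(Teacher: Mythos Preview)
Your argument is correct. The three-region split (near $k$, near $j$, far from both) together with the observation $1-3\gamma\le -\gamma$ for $\gamma\ge\tfrac12$ gives exactly the bound $|j-k|^{-\gamma}$, and the edge case $d=1$ is handled as you note.

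Your route differs from the paper's. The paper splits into only two regions at the midpoint $\lfloor(j+k)/2\rfloor$: on the half closer to $k$ it pulls out $|j-l|^{-\gamma}\ll|j-k|^{-\gamma}$ and sums the convergent series $\sum|k-l|^{-2\gamma}$, just as in your Region~A; on the other half it factors $|k-l|^{-2\gamma}=|k-l|^{-\gamma}\cdot|k-l|^{-\gamma}$, bounds one factor by $|j-k|^{-\gamma}$, and then invokes Lemma~\ref{inequality} to control the remaining $\sum|j-l|^{-\gamma}|k-l|^{-\gamma}\ll|j-k|^{1-2\gamma}\le 1$. So the paper reduces the ``far from $k$'' piece to the already-proved symmetric lemma, whereas you handle Regions~B and~C from scratch via the direct comparison $|j-l|^{-\gamma}\ll|k-l|^{-\gamma}$ in the far region. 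Your version is self-contained (no appeal to Lemma~\ref{inequality}) at the cost of one extra region; the paper's version is shorter but relies on the earlier lemma. Both yield the same intermediate exponent $d^{1-3\gamma}$ on the non-dominant piece.
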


The following lemma follows easily by Triangle Inequality, Minkowski’s Inequality and Jensen’s Inequality.
\begin{lemma}\label{reducexpec}
	Let $z > 1,\ n_r = 2^r\ \ \forall\ r\in \mathbb{N}$, and $\{X_n\}_{n\in \mathbb{N}}$ be random variables such that $E\big[|X_n|^z\big]\ <\ \infty$. Then, we have
	$$E^\frac{1}{z}\left[\sup\limits_{n_r\le n< n_{r+1}}\left|X_n - E\left(X_n\right)\right|^z\right]\ \ \stackrel{r}{\ll}\ \ E^\frac{1}{z}\left[\left|\sup\limits_{n_r\le n< n_{r+1}}\left|X_n\right|\ \right|^z\right]\ .$$
\end{lemma}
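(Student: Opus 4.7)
The plan is to exploit the decomposition $X_n - E(X_n) = X_n - E(X_n)$ and control the centering term separately. Since $z > 1$, the $L^z$-norm is a genuine norm, so the triangle inequality inside the expectation together with Minkowski's inequality will split the left-hand side into a piece involving only $|X_n|$ (which is already what we want) and a piece involving only the deterministic quantity $|E(X_n)|$, which we then bound by Jensen's inequality.

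More concretely, I would first note the pointwise bound
\begin{equation*}
\sup_{n_r \le n < n_{r+1}} |X_n - E(X_n)| \;\le\; \sup_{n_r \le n < n_{r+1}} |X_n| \;+\; \sup_{n_r \le n < n_{r+1}} |E(X_n)|,
\end{equation*}
which follows from the triangle inequality $|X_n - E(X_n)| \le |X_n| + |E(X_n)|$ applied at each $n$ and then taking the supremum on each side. Applying Minkowski's inequality (valid because $z > 1$) to the $L^z$-norm of the left-hand side yields
\begin{equation*}
\left\|\sup_{n_r \le n < n_{r+1}} |X_n - E(X_n)|\right\|_z \;\le\; \left\|\sup_{n_r \le n < n_{r+1}} |X_n|\right\|_z \;+\; \sup_{n_r \le n < n_{r+1}} |E(X_n)|,
\end{equation*}
where the second norm has collapsed to the deterministic quantity itself because $\sup_n |E(X_n)|$ is non-random (the supremum is over the finite index set $\{n_r, n_r+1, \ldots, n_{r+1}-1\}$).

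Next, I would dominate this deterministic supremum by Jensen's inequality: for each $n$ in the finite range, $|E(X_n)|^z \le E[|X_n|^z] \le E\!\left[\sup_{m} |X_m|^z\right]$, so taking the $z$th root and then the supremum over $n$ gives
\begin{equation*}
\sup_{n_r \le n < n_{r+1}} |E(X_n)| \;\le\; E^{1/z}\!\left[\sup_{n_r \le n < n_{r+1}} |X_n|^z\right] \;=\; \left\|\sup_{n_r \le n < n_{r+1}} |X_n|\right\|_z.
\end{equation*}
Substituting this back produces the final bound with absolute constant $2$, which is trivially independent of $r$ and therefore yields the claimed $\stackrel{r}{\ll}$ relation.

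There is no substantive obstacle here; the lemma is genuinely a one-line consequence of the three named inequalities. The only minor point to verify is that the supremum over $n_r \le n < n_{r+1}$ is over a finite set, so that exchanging $\sup$ and $|\cdot|$, and bounding $|E(X_n)|$ uniformly in $n$ by a single Jensen estimate, are both legitimate without any measurability or integrability subtleties.
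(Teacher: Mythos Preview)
Your proof is correct and follows exactly the approach the paper indicates: the lemma is stated there as an immediate consequence of the Triangle Inequality, Minkowski's Inequality, and Jensen's Inequality, which is precisely the chain of steps you carry out. Nothing further is needed.
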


The following lemma guarantees the existence of the $s$th moment of a two-sided LRD linear process as long as the $s$th moment of its innovations are finite.
It follows from Samorodnitsky \cite[Theorem~1.4.1]{Samorod} and triangle inequality.
\begin{lemma}\label{dataexpec}
	Let $s\in \mathbb{N}$ and $\left\{\xi_{l}\right\}_{l\in\mathbb Z}$ be i.i.d.\ zero-mean random variables such that $E\left[|\xi_1|^{s\vee 2} \right]<\infty$, and $\{c_l\}_{l\in\mathbb Z}$ satisfy $\ \sup\limits_{l\in\mathbb Z}|l|^\sigma|c_l|<\infty$, for some $\sigma \in \left(\frac{1}{2},1\right)$. Let $1<i<s$, and $\ \phi_{k,q}=\left|\sum_{l\in \mathbb{R}\setminus\{q\}} c_l \xi_{k-l}\right|^i$. Then, $\sup_{q\in\mathbb{Z}}\|\phi_{k,q}\|_{\frac{s}{i}}<\infty$.
\end{lemma}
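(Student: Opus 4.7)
The plan is to reduce the claim to showing that the $s$th moment of a two-sided linear process with any single innovation index deleted is finite uniformly in which index is removed. Specifically, $\|\phi_{k,q}\|_{s/i}^{s/i}=E\bigl|\sum_{l\neq q}c_l\xi_{k-l}\bigr|^{s}$, and since $\{\xi_l\}$ is i.i.d., this expectation is independent of $k$; it therefore suffices to prove that $\sup_{q\in\mathbb Z}E\bigl|\sum_{l\neq q}c_l\xi_{-l}\bigr|^{s}<\infty$.

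The hypothesis $1<i<s$ forces $s\geq 3$, so $s\vee 2=s$ and the standing assumption $E|\xi_1|^{s\vee 2}<\infty$ yields both $E|\xi_1|^{s}<\infty$ and $E\xi_1^{2}<\infty$. The summands $\{c_l\xi_{-l}\}_{l\neq q}$ are independent, zero-mean, and their sum converges in $L^2$ (hence almost surely) by Khinchin-Kolmogorov, as already noted following (\ref{linprocess}). The plan is to invoke Rosenthal's inequality for $s\geq 2$, which supplies a constant $C_s$ such that
$$E\Bigl|\sum_{l\neq q}c_l\xi_{-l}\Bigr|^{s}\ \leq\ C_s\Bigl[E|\xi_1|^{s}\sum_{l\neq q}|c_l|^{s}+(E\xi_1^{2})^{s/2}\Bigl(\sum_{l\neq q}c_l^{2}\Bigr)^{s/2}\Bigr].$$
The extension of Rosenthal from finite to infinite sums is standard: truncate to $\{l:|l|\leq N,\,l\neq q\}$, apply the finite-sum inequality, and pass to the limit by Fatou using the $L^2$-convergence just noted.

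Both sums on the right are dominated by the full sums over $l\in\mathbb Z$, independently of $q$. By Remark \ref{sigmasless1}, $|c_l|\ll|l|^{-\sigma}$ for $l\neq 0$; consequently $\sum_{l\in\mathbb Z}c_l^{2}<\infty$ because $2\sigma>1$, and $\sum_{l\in\mathbb Z}|c_l|^{s}<\infty$ because $s\sigma\geq 3\sigma>1$. Combining these bounds, $E\bigl|\sum_{l\neq q}c_l\xi_{-l}\bigr|^{s}$ is majorized by a constant depending only on $s$, $\sigma$, $E|\xi_1|^{s}$ and $E\xi_1^{2}$, so is uniform in $q$ and $k$. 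Raising to the $(i/s)$-th power gives the conclusion.

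The only genuinely delicate step is justifying Rosenthal's inequality in the two-sided infinite-sum setting with a deleted index. Everything else is a direct appeal to decay bounds already used throughout the paper, so the main obstacle is purely in pinning down the right statement of Rosenthal and verifying its extension to our infinite series.
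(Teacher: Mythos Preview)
Your proof is correct but takes a different route from the paper. The paper argues via Minkowski's inequality: since $\|\phi_{k,q}\|_{s/i}^{1/i}=\bigl\|\sum_{l\neq q}c_l\xi_{k-l}\bigr\|_s$, it bounds this by $\bigl\|\sum_{l\in\mathbb Z}c_l\xi_{1-l}\bigr\|_s+\sup_q|c_q|\,\|\xi_1\|_s$, then appeals to Samorodnitsky \cite[Theorem~1.4.1]{Samorod} for finiteness of the full-sum $s$th moment, while the second term is trivially bounded because the $c_l$ are uniformly bounded. You instead apply Rosenthal's inequality directly to the deleted-index sum and bound the resulting $\sum_{l\neq q}|c_l|^s$ and $\sum_{l\neq q}c_l^2$ by the full sums over $\mathbb Z$. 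Your approach is more self-contained, avoiding the external citation and giving an explicit constant, at the cost of having to justify Rosenthal for the infinite sum (which, as you note, is routine via truncation and Fatou). The paper's approach is shorter in execution but outsources the substantive moment bound; it also makes the uniformity in $q$ completely transparent, since the dependence on $q$ is isolated in a single term $|c_q|\,\|\xi_1\|_s$.
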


\section{Classical Theorems}\label{appclassic}
\noindent
Loeve \cite[Section~17, Theorem~A, case~4]{Loeve} provides the following statement of the Marcinkiewicz-Zygmund strong law of large numbers.
\begin{theorem}[Marcinkiewicz-Zygmund Strong Law of Large Numbers]\label{msllndef}
	Let $\ \{X_n\}_{n\in\mathbb{Z}}$ be a sequence of i.i.d. random variables, and let $\ 0<p<2$. Then, $E\left[|X_1|^p\right]<\infty\ $ if and only if
	\begin{eqnarray}\nonumber
	\lim_{n\rightarrow\infty} n^{-\frac{1}{p}}\sum_{k=1}^{n}(X_k-c)=0\quad\mbox{a.s.\ ,\quad where}\ \ c=\left\{ \begin{array}{ll} 0, &\ p<1\\ E(X_1), &\ p\geq 1 \end{array}\right..
	\end{eqnarray}
\end{theorem}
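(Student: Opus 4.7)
I prove the two implications separately. For sufficiency the standard route is truncation at level $k^{1/p}$ together with the Kolmogorov--Kronecker machinery; for necessity, differencing the scaled partial sums at consecutive $n$ reduces matters to the second Borel--Cantelli lemma for i.i.d.\ sequences.

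\emph{Sufficiency.} Set $Y_k := X_k \mathbf{1}_{\{|X_k|\le k^{1/p}\}}$ and decompose
$$\sum_{k=1}^n(X_k-c) \;=\; \sum_{k=1}^n(X_k-Y_k) \;+\; \sum_{k=1}^n(Y_k-E[Y_k]) \;+\; \sum_{k=1}^n(E[Y_k]-c).$$
The first piece is almost surely eventually constant because $\sum_k P(|X_k|>k^{1/p})=\sum_k P(|X_1|^p>k)\le E[|X_1|^p]<\infty$ and the first Borel--Cantelli lemma applies, so $n^{-1/p}$ times it vanishes. For the second, a Fubini interchange in the tail-integral representation $E[Y_k^2]=\int_0^{k^{1/p}}\!2t\,P(|X_1|>t)\,dt$ combined with the estimate $\sum_{k\ge t^p}k^{-2/p}\asymp t^{p-2}$ (where $p<2$ is essential so that $2/p>1$) gives $\sum_k \mathrm{Var}(Y_k)/k^{2/p}\lesssim E[|X_1|^p]<\infty$; Khinchin--Kolmogorov then delivers a.s.\ convergence of $\sum_k(Y_k-E[Y_k])/k^{1/p}$, and Kronecker's lemma converts this into $n^{-1/p}\sum_k(Y_k-E[Y_k])\to 0$ a.s.

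The delicate mean-correction piece splits by cases. When $p\ge 1$, $c=E[X_1]$ and $E[Y_k]-c=-E[X_1\mathbf{1}_{\{|X_1|>k^{1/p}\}}]$; the na\"ive Markov bound $O(k^{(1-p)/p})$ is not enough, so I would sharpen it via $|X_1|\le|X_1|^p\cdot k^{(1-p)/p}$ on $\{|X_1|>k^{1/p}\}$ to obtain $|E[Y_k]-c|\le k^{(1-p)/p}\varepsilon_k$ with $\varepsilon_k:=E[|X_1|^p\mathbf{1}_{\{|X_1|>k^{1/p}\}}]\to 0$ by dominated convergence, and close via a weighted Ces\`aro estimate $\sum_{k\le n}k^{(1-p)/p}\varepsilon_k=o(n^{1/p})$ (using $\sum_{k\le n}k^{(1-p)/p}\asymp pn^{1/p}$). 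When $p<1$ we have $c=0$ but $E[X_1]$ need not exist, so I would instead apply Kolmogorov's three-series theorem directly to $\sum_k X_k/k^{1/p}$: conditions (i) and (iii) are the two estimates above, and (ii) follows from the Fubini identity $\sum_k E[Y_k]/k^{1/p}=E\bigl[X_1\sum_{k\ge|X_1|^p}k^{-1/p}\bigr]$ with inner tail $O(|X_1|^{p-1})$ (valid since $1/p>1$), yielding the finite bound $O(E[|X_1|^p])$. Convergence of the series plus Kronecker then give $n^{-1/p}S_n\to 0$.

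\emph{Necessity.} Set $T_n:=\sum_{k=1}^n(X_k-c)$, so $T_n/n^{1/p}\to 0$ a.s.\ by hypothesis. The identity
$$\frac{X_n-c}{n^{1/p}} \;=\; \frac{T_n}{n^{1/p}} \;-\; \left(\frac{n-1}{n}\right)^{\!1/p}\frac{T_{n-1}}{(n-1)^{1/p}}$$
forces $X_n/n^{1/p}\to 0$ a.s. Since the $X_n$ are i.i.d., the second Borel--Cantelli lemma gives $\sum_n P(|X_1|>n^{1/p})<\infty$, i.e.\ $\sum_n P(|X_1|^p>n)<\infty$, which is equivalent to $E[|X_1|^p]<\infty$ via the tail-sum formula.

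\emph{Expected main obstacle.} The trickiest step is the mean correction for $p\in[1,2)$: without the dominated-convergence refinement that produces the vanishing factor $\varepsilon_k$, the crude Markov bound gives only $O(1)$ in place of the required $o(1)$, and the centered Kronecker argument would not suffice to absorb $\sum_k(E[Y_k]-c)$. The $p<1$ case sidesteps this entirely by routing through the three-series theorem without ever forming $E[X_1]$.
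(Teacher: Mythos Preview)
The paper does not prove this theorem; it merely states it in Appendix~\ref{appclassic} with a citation to Lo\`eve \cite[Section~17, Theorem~A, case~4]{Loeve}, so there is no in-paper proof to compare against. Your proposal is the classical truncation-and-Kronecker argument that one finds in Lo\`eve (and Chow--Teicher, Gut, etc.), and it is correct as sketched: the Borel--Cantelli handling of $X_k-Y_k$, the Fubini/tail-sum bound on $\sum_k k^{-2/p}\mathrm{Var}(Y_k)$, the dominated-convergence refinement $|E[Y_k]-c|\le k^{(1-p)/p}\varepsilon_k$ with $\varepsilon_k\to 0$ for $p\ge 1$, the three-series route for $p<1$, and the differencing/second Borel--Cantelli for necessity are all standard and sound.

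One small point worth tightening if you write this out fully: in the $p\ge 1$ mean-correction step, the passage from $\sum_{k\le n}k^{(1-p)/p}\varepsilon_k=o(n^{1/p})$ is a Toeplitz/Stolz--Ces\`aro argument rather than a bare Ces\`aro average, so state the weights explicitly; and in the necessity direction, note that the argument works for \emph{any} constant $c$ (so in particular the theorem statement's circularity---defining $c=E[X_1]$ for $p\ge 1$ before knowing $E[|X_1|^p]<\infty$---is resolved by first deducing $E[|X_1|^p]<\infty$ from convergence with some $c$, then identifying $c$).
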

More generally, for a stationary time series $\{X_n\}_{n\in\mathbb{Z}}$ with given conditions on $\{X_n\}$, any result regarding the almost sure convergence of $n^{-\frac1p}$$\sum_{k=1}^{n}(X_n-c)$ for some constant $c$ and some $p\in (0,2)$, is known as a Marcinkiewicz-Zygmund strong law, or simply a Marcinkiewicz strong law of order $p$.

Lastly, we present the following Theorem, which follows from a theorem developed by Serfling (see Stout \cite[Theorem~2.4.1]{Stout}). The full derivation is provided in the supplementary document.  
\begin{theorem}\label{stout}
	Let $\{Z_k\}_{k\in\mathbb{N}}$ be a time series with finite second moments, and $f$ be a super-additive function on $\mathbb{N}$, such that
	\begin{eqnarray}\nonumber
	E\left[\left(\sum_{i=n'+1}^{n} Z_i \right)^2\right]\ &\leq&\ f(n-{n'})\qquad\!\forall\ n'<n\in\mathbb{N}\cup\{0\}\ .
	\end{eqnarray}
	Then, for $n_{r}=2^{r},\ r\in\mathbb{N}\cup\{0\}$, and $n',n\in\mathbb{N}$, we have
	\begin{eqnarray}\nonumber
	E\left[\max_{n_r\leq n'<n<n_{r+1}}\left(\sum_{i=n'+1}^{n} Z_i\right)^2 \right]\ \ \stackrel{r}\ll\ \ r^2 f(n_r)\ .
	\end{eqnarray}
\end{theorem}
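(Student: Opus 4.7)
The plan is to deduce this classical maximal inequality (essentially the Moricz--Serfling dyadic bound) via a two-step reduction: first from the two-parameter max over $(o,n)$ to a one-parameter max over partial sums anchored at $n_r$, then to a Cauchy--Schwarz estimate exploiting the binary expansion of the upper index. First, I would set $W_k = \sum_{i=n_r+1}^{n_r+k} Z_i$ for $0 \leq k \leq 2^r - 1$. Since $\sum_{i=o+1}^n Z_i = W_{n-n_r} - W_{o-n_r}$ and $(a-b)^2 \leq 2a^2 + 2b^2$,
\[
\max_{n_r \leq o < n < n_{r+1}} \Big(\sum_{i=o+1}^{n} Z_i\Big)^2 \ \leq\ 4 \max_{0 \leq k < n_r} W_k^2,
\]
so it suffices to show $E\bigl[\max_{0 \leq k < n_r} W_k^2\bigr] \stackrel{r}\ll r^2 f(n_r)$.

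The core step is a dyadic decomposition keyed to the binary representation of $k$. For each $k \in \{0, 1, \ldots, 2^r - 1\}$, write $k = \sum_{j=0}^{r-1} \epsilon_j(k)\, 2^j$ with $\epsilon_j(k) \in \{0,1\}$, and set $k^{(r)} = 0$ and $k^{(j)} = k^{(j+1)} + \epsilon_j(k)\, 2^j$ for $j = r-1, \ldots, 0$, so $k^{(0)} = k$. Telescoping gives $W_k = \sum_{j=0}^{r-1} \bigl(W_{k^{(j)}} - W_{k^{(j+1)}}\bigr)$, where each summand with $\epsilon_j(k)=1$ is a sum of exactly $2^j$ consecutive $Z_i$'s of the form $B_{j,m} := \sum_{i=n_r+m\,2^j+1}^{n_r+(m+1)2^j} Z_i$ for some $m \in \{0,1,\ldots,2^{r-j}-1\}$. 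The hypothesis forces $E[B_{j,m}^2] \leq f(2^j)$. Applying Cauchy--Schwarz across the $r$ dyadic levels and bounding the per-level max by a sum over $m$ yields
\[
W_k^2 \ \leq\ r \sum_{j=0}^{r-1} \bigl(W_{k^{(j)}} - W_{k^{(j+1)}}\bigr)^{\!2}\ \leq\ r \sum_{j=0}^{r-1} \max_{0 \leq m < 2^{r-j}} B_{j,m}^2 ,
\]
and hence
\[
E\Big[\max_{0 \leq k < n_r} W_k^2\Big] \ \leq\ r \sum_{j=0}^{r-1} \sum_{m=0}^{2^{r-j}-1} E[B_{j,m}^2]\ \leq\ r \sum_{j=0}^{r-1} 2^{r-j}\, f(2^j).
\]
Super-additivity of $f$, applied $2^{r-j}-1$ times to partition an interval of length $2^r$ into $2^{r-j}$ sub-intervals of length $2^j$, gives $2^{r-j} f(2^j) \leq f(2^r) = f(n_r)$. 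Summing the resulting $r$ copies of $f(n_r)$ and multiplying by the outer $r$ from Cauchy--Schwarz produces the claimed $r^2 f(n_r)$ bound.

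The main obstacle, modest though it is, is bookkeeping the dyadic indices cleanly so that in the Cauchy--Schwarz step each $W_k$ uses at most one block at each of the $r$ levels and so that the super-additivity application is unambiguous; once this is set up correctly, every other step is a routine inequality. Since this is essentially Serfling's Theorem~2.4.1 in \cite{Stout} (a variant of the Rademacher--Menshov maximal inequality for orthogonal series), the result should go through without surprises.
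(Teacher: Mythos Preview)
Your proof is correct and self-contained; the dyadic decomposition plus Cauchy--Schwarz plus super-additivity is exactly the standard Rademacher--Menshov/M\'oricz argument underlying Serfling's maximal inequality, and every step checks out. The paper, however, does not actually prove this theorem: it simply states it in the appendix and remarks that it ``follows from a theorem developed by Serfling \cite[Theorem~2.4.1]{Stout}'', so there is no in-paper proof to compare against. In effect you have supplied the proof the paper only cites.
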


\section{Supplementary Document}\label{supplemdoc}
\noindent
Lemmas 2, 3 and 4, as well as Theorem 5 of the paper are restated and proved in this supplementary document. Definitions, equations and references from the paper are often referred to in the proofs.

\begin{manuallemma}{2}\label{psilemma}
	Let $n'<n\in \mathbb{N}\cup\{0\}$, $s \in \mathbb{N},\ \delta\geq 1,\ \lambda_q = (a_1, a_2, \hdots, a_q)$ is a decreasing partition of $s$, and $v = \left|\{1\leq r\leq q: a_r = 1\}\right|$. Let $\{c_l\}_{l\in\mathbb Z}$ satisfy $\ \sup\limits_{l\in\mathbb Z}|l|^\sigma|c_l|<\infty,\ \ \mbox{for some}\ \ \sigma \in \left(\frac{1}{2},1\right)$, and $\{(\psi_l^{(1)},\hdots,\psi_l^{(q)})\}_{l\in \mathbb{Z}}$ be i.i.d $\ \mathbb{R}^q$-valued random vectors, such that
	\begin{eqnarray}\label{psiassump}
	\quad\left\{ \begin{array}{l} E\left(\psi_1^{(r)}\right)\qquad \ll\  \textbf{1}_{\{1\leq r\leq q-v\}},
	\\
	E\left[\left(\psi_1^{(r)}\right)^2\right]\ \ \ll\ \delta\textbf{1}_{\{r=1\}}+\textbf{1}_{\{r\neq 1\}},
	\end{array}\right.\qquad \forall\ \ 1\leq r \leq q\ .
	\end{eqnarray}
	\begin{eqnarray}\nonumber
	\hspace{-0.5cm}\mbox{Define},\ Y_{n',n,\delta}^{\lambda_q} = \sum_{k=n'+1}^{n}\  \sum_{\boldsymbol\ell\in\Delta}\left(\prod_{r=1}^{q} c_{k-l_r}^{a_r}\right) \left(\prod_{r=1}^{q}\psi_{l_r}^{(r)} - E\left(\prod_{r=1}^{q}\psi_{l_r}^{(r)}\right)\right).
	\end{eqnarray}
	Then,$\ E\left[(Y_{n',n,\delta}^{\lambda_q})^2\right]\!\stackrel{n',n,\delta}{\ll}
	\!\left\{ \begin{array}{ll}\delta\ (n-n'), &a_q \geq 2,\\ \delta\ (n-n')\ l_{s,\sigma}(n-n'), &a_1 = 1,\\ (\delta\ (n-n'))\vee((n-n')\ l_{1,\sigma}(n-n')),\! &a_q=1,a_1\geq 2,\end{array}\right.$\\\\
	where $\boldsymbol\ell$ and $l_{s,\sigma}$ are defined in the Notation List in Subsection 1.2. Further, if s is even and $E\left(\psi_1^{(r)}\right) = 0\ $for odd $a_r$, then this bound can be tightened to
	$$E\left[(Y_{n',n,\delta}^{\lambda_q})^2\right]\ \ \stackrel{n',n,\delta}{\ll}\ \ (\delta\ (n-n'))\ \vee\ ((n-n')\ l_{2,\sigma}(n-n')),$$
	when $a_q = 1$ and $a_1 \geq 2$.
\end{manuallemma}
\begin{proof}

We first bound the second moment of $Y_{n',n,\delta}^{\lambda_q}$.
\begin{eqnarray}\label{main1}\nonumber
\!\!\!\!\!\!\!\!\!\!E\left[(Y_{n',n,\delta}^{\lambda_q})^2\right]\!\!
& = &\!\!\!\!
\sum\limits _{k=n'+1}^{n}\sum\limits _{j=n'+1}^{n}\ \sum_{l_1\neq l_2\neq \hdots\neq l_q}\ \sum_{m_1\neq m_2\neq \hdots\neq m_q}\!\!\left(\prod_{r=1}^{q} c_{j-m_r}^{a_r} c_{k-l_r}^{a_r}\!\!\right)
\\\nonumber
\!\!\!\!\!\!\!\!\!\!\!\!\!\!\!\!&&\hspace{0.6cm} \left[E\left(\prod_{r=1}^{q}\psi_{l_r}^{(r)}\psi_{m_r}^{(r)}\right) - E\left(\prod_{r=1}^{q}\psi_{l_r}^{(r)}\right)E\left(\prod_{r=1}^{q}\psi_{m_r}^{(r)}\right)\right]
\\ \nonumber
\!\!\!\!\!\!\!\!\!\!\!\!\!\!\!\!&\leq&
\sum_{k=n'+1}^{n}\sum_{j=n'+1}^{n} \sum_{(\boldsymbol\ell,\textit{\textbf{m}})\in\Delta\times \Delta}\left(\prod_{r=1}^{q} \left|c_{j-m_r}^{a_r}\right| \left|c_{k-l_r}^{a_r}\right|\right)
\\
&&\hspace{0.1cm} \left|E\left(\prod_{r=1}^{q}\psi_{l_r}^{(r)}\psi_{m_r}^{(r)}\right) - E\left(\prod_{r=1}^{q}\psi_{l_r}^{(r)}\right)E\left(\prod_{r=1}^{q}\psi_{m_r}^{(r)}\right)\right|.
\end{eqnarray}
Please refer to Definition 5 for the notation from here on. Observe that the summation in (\ref{main1}) is over $\Delta\times \Delta$. Based on $q$ and $v = \left|\{1\leq r\leq q: a_r = 1\}\right|$, we can partition $\Delta\times \Delta$ into the sets $\Delta\times \Delta(V_1,...,V_6,\nu)$.
For sets $V_1,\hdots,V_6$ and matching function $\nu$ as in Definition 4, define
\begin{eqnarray}\nonumber\label{Sdef}
\!\!\!\!\!\!\!\!\!\!\!\!\!\!\!S(V_1,\hdots,V_6,\nu)\!\!\!
&=&\!\!\!
\sum\limits_{k=n'+1}^{n}\sum\limits _{j=n'+1}^{n} \sum_{(\boldsymbol\ell,\textit{\textbf{m}})\in\Delta\times \Delta(V_1,...,V_6,\nu)}\!\!\left(\prod_{r=1}^{q} \left|c_{j-m_r}^{a_r}\right|\left|c_{k-l_r}^{a_r}\right|\!\right)
\\
\!\!\!\!\!\!\!\!\!\!\!\!\!\!\!\!\!&&\!\hspace{0.3cm}
\left|E\!\left(\prod_{r=1}^{q}\psi_{l_r}^{(r)}\psi_{m_r}^{(r)}\!\right) - E\!\left(\prod_{r=1}^{q}\psi_{l_r}^{(r)}\!\right)E\!\left(\prod_{r=1}^{q}\psi_{m_r}^{(r)}\!\right)\right|\!,
\end{eqnarray}
where $\boldsymbol\ell = (l_1,\hdots,l_q)$ and $\textit{\textbf{m}} = (m_1,\hdots,m_q)$ are as in the Notation List in Subsection 1.2. Using the fact that for a given $q$, there can only be a finite number of possibilities for $V_1,\hdots,V_6$ and $\nu$, we get from (\ref{main1}) and (\ref{Sdef}), that
\begin{eqnarray}\label{YS}
E\left[(Y_{n',n,\delta}^{\lambda_q})^2\right]\ \  \stackrel{n',n,\delta}{\ll}\ \max_{V_1,\hdots,V_6,\ \nu} S(V_1,\hdots,V_6,\nu)\ .
\end{eqnarray}
Observe that when $\left|V_1\right|>0\ $ or $\ \left|W_1\right|>0$, $S(V_1,\hdots,V_6,\nu)=0$ according to Lemma 1, and need not be considered in (\ref{YS}). Hence we assume that $\left|V_1\right|=\left|W_1\right|=0$. From Remark 7, recall that $\ \left|V_1\right|+\left|V_2\right|+\left|V_3\right|=\left|W_1\right|+\left|\nu(V_2)\right|+\left|\nu(V_4)\right|=v$. Due to injectivity of $\nu$, we have $|\nu(V_r)|=|V_r|$ for $2\leq r\leq 5$, so when $\ \left|V_1\right|=\left|W_1\right|=0$, we get our second observation, i.e. $\left|V_3\right|=\left|V_4\right|$. Similarly, since $\left|V_1\right|+\hdots+\left|V_6\right|=\left|W_1\right|+\left|\nu(V_2)\right|+\hdots+\left|\nu(V_5)\right|+\left|W_6\right|=q$, using $\ \left|V_1\right|=\left|W_1\right|=0$, we get that $\left|V_6\right|=\left|W_6\right|$. 
Hence, we only need to consider those terms $S(V_1,\hdots,V_6,\nu)$, where
\begin{eqnarray}\label{vassump}
\quad\left\{ \begin{array}{l} \left|V_1\right|=\left|W_1\right|=0,
\\
\left|V_3\right|=\left|V_4\right|,
\\
\left|V_6\right|=\left|W_6\right|.
\end{array}\right.
\end{eqnarray}
We now fix sets $V_1,\hdots,V_6$ and matching function $\nu$, from Definition 5, satisfying (\ref{vassump}). To find an upper bound of $S(V_1,\hdots,V_6,\nu)$, we use Lemma 1 and define
\begin{eqnarray}\label{rhobound}
\rho_{u_2,\hdots,u_6}\ =\ 
\left\{ \begin{array}{ll} 1,\quad &0<u_6<q,\ u_4=u_5=0\\ \delta,\quad &\mbox{otherwise}\end{array}\right..
\end{eqnarray}
Using (\ref{Sdef},\ref{vassump},\ref{rhobound}), and Lemma 1, we group coefficients according to $V_1,\hdots,V_6$, and $\nu$ to get that
\begin{eqnarray}\nonumber
\!\!\!\!\!\!\!\!\!\!\!\!\!\!\!&&
S(V_1,\hdots,V_6,\nu)
\\ \nonumber
\!\!\!\!\!\!\!\!\!\!\!\!\!\!\!&\stackrel{n',n,\delta}{\ll} &
\sum\limits_{k=n'+1}^{n}\sum\limits _{j=n'+1}^{n}\ \sum_{(\boldsymbol\ell,\textit{\textbf{m}})\in\Delta\times \Delta(V_1,...,V_6,\nu)}\left(\prod_{r=1}^{q} \left|c_{j-m_r}^{a_r}\right| \left|c_{k-l_r}^{a_r}\right|\right)\rho_{\left|V_2\right|,\hdots,\left|V_6\right|}
\end{eqnarray}
\begin{eqnarray}\nonumber\label{mainhalf}
\!\!\!\!\!\!\!\!\!\!\!\!\!\!\!&\stackrel{n',n,\delta}{\ll} &
\rho_{\left|V_2\right|,\hdots,\left|V_6\right|}\ \sum\limits_{k=n'+1}^{n}\sum\limits _{j=n'+1}^{n}\ \sum_{(\boldsymbol\ell,\textit{\textbf{m}})\in\Delta\times \Delta(V_1,...,V_6,\nu)}\left(\prod_{r\in W_6} |c_{j-m_r}^{a_r}|\right)
\\ \nonumber
\!\!\!\!\!\!\!\!\!\!\!\!\!\!\!&&\!\!\!\!
\left(\prod_{r\in V_6}|c_{k-l_r}^{a_r}| \right)\!\left(\prod_{r\in V_5}|c_{j-m_{\nu(r)}}^{a_{\nu(r)}}||c_{k-l_r}^{a_r}|\right)\!\left(\prod_{r\in V_4}|c_{j-m_{\nu(r)}}^{a_{\nu(r)}}||c_{k-l_r}^{a_r}|\right)
\\
\!\!\!\!\!\!\!\!\!\!\!\!\!\!\!&&\hspace{2cm}
\left(\prod_{r\in V_3}|c_{j-m_{\nu(r)}}^{a_{\nu(r)}}||c_{k-l_r}^{a_r}| \right)\!\left(\prod_{r\in V_2}|c_{j-m_{\nu(r)}}^{a_{\nu(r)}}||c_{k-l_r}^{a_r}| \right)\!.
\end{eqnarray}
Note that $\ a_r\geq 2\ $ (hence $\ c_l^{a_r}\leq c_l^2\ $) for $\ r\in V_4\cup V_5\cup V_6\cup W_6$, and $a_r=1\ $ for $r\in V_2\cup V_3$. Next, for $r\in V_2\cup V_3\cup V_4\cup V_5$, we note that $l_r=m_{\nu(r)}$ in (\ref{mainhalf}), then bring in the summations and extend them over all integers, to get
\begin{eqnarray}\nonumber\label{main2}
\!\!\!\!\!\!\!\!\!\!\!\!&&
S(V_1,\hdots,V_6,\nu)
\\ \nonumber
\!\!\!\!\!\!\!\!\!\!\!\!&\stackrel{n',n,\delta}{\ll} &
\rho_{\left|V_2\right|,\hdots,\left|V_6\right|}\ \sum\limits_{k=n'+1}^{n}\sum\limits _{j=n'+1}^{n}\left(\prod_{r\in W_6}\sum_{m_r=-\infty}^{\infty} |c_{j-m_r}^2|\right)\left(\prod_{r\in V_6}\sum_{l_r = -\infty}^{\infty}|c_{k-l_r}^2| \right)
\\ \nonumber
&&\hspace{2.2cm}
\left(\prod_{r\in V_5}\sum_{l_r = -\infty}^{\infty} |c_{j-l_r}^2||c_{k-l_r}^2| \right)
\left(\prod_{r\in V_4}\sum_{l_r = -\infty}^{\infty} |c_{j-l_r}||c_{k-l_r}^2| \right)
\\ \nonumber
&&\hspace{2.3cm}
\left(\prod_{r\in V_3}\sum_{l_r = -\infty}^{\infty} |c_{j-{l_r}}^2||c_{k-l_r}| \right)\left(\prod_{r\in V_2}\sum_{l_r = -\infty}^{\infty} |c_{j-l_r}||c_{k-l_r}|\right)
\\ \nonumber
\!\!\!\!\!\!\!\!\!\!\!\!&\stackrel{n',n,\delta}{\ll} &	\rho_{\left|V_2\right|,\hdots,\left|V_6\right|}\ \sum\limits_{k=n'+1}^{n}\sum\limits _{j=n'+1}^{n}\left(\sum_{m = -\infty}^{\infty}\!\! |c_{j-m}^2|\!\right)^{\left|W_6\right|}\left(\sum_{l = -\infty}^{\infty}\!\! |c_{j-l}^2|\right)^{\left|V_6\right|}
\\ \nonumber
&&\hspace{2.3cm} 
\left(\sum_{l = -\infty}^{\infty}\!\!|c_{j-l}^2||c_{k-l}^2|\right)^{\left|V_5\right|}\left(\sum_{l = -\infty}^{\infty}|c_{j-l}||c_{k-l}^2| \right)^{\left|V_4\right|}
\\
&&\hspace{2.4cm}
\left(\sum_{l = -\infty}^{\infty}|c_{j-l}^2||c_{k-l}| \right)^{\left|V_3\right|}\left(\sum_{l = -\infty}^{\infty} |c_{j-l}||c_{k-l}|\right)^{\left|V_2\right|}\!\!\!\!.
\end{eqnarray}\\
Applying Lemma \ref{inequality} with $\gamma = \sigma,\ 2\sigma$ and Lemma \ref{newineq} with $\gamma = \sigma$, we have
\begin{eqnarray}\label{both2}\nonumber
\sum_{l = -\infty}^{\infty}|c_{j-l}^2||c_{k-l}^2| &\stackrel{n',n,\delta}{\ll}&
\left\{ \begin{array}{ll} 1 + \sum_{\substack{l=-\infty\\l\neq j}}^{\infty}|j-l|^{-4\sigma},\  &j=k\\ \sum_{\substack{l=-\infty\\l\not\in \{j,k\}}}^{\infty} |j-l|^{-2\sigma} |k-l|^{-2\sigma}+|j-k|^{-2\sigma},\ &j\neq k\end{array}\right.\\
&\stackrel{n',n,\delta}{\ll}&
\left\{ \begin{array}{ll} 1,\quad &j=k\\ |j-k|^{-2\sigma},\quad &j\neq k\end{array}\right.
\end{eqnarray}
\begin{eqnarray}\label{one2one1}\nonumber
\sum_{l = -\infty}^{\infty}|c_{j-l}||c_{k-l}^2| &\stackrel{n',n,\delta}{\ll}&
\left\{ \begin{array}{ll} 1 + \sum_{\substack{l=-\infty\\l\neq j}}^{\infty}|j-l|^{-3\sigma},\quad &j=k\\ \sum_{\substack{l=-\infty\\l\not\in \{j,k\}}}^{\infty} |j-l|^{-\sigma} |k-l|^{-2\sigma}+|j-k|^{-\sigma},\quad &j\neq k\end{array}\right.\\
&\stackrel{n',n,\delta}{\ll}&
\left\{ \begin{array}{ll} 1,\quad &j=k\\ |j-k|^{-\sigma},\quad &j\neq k\end{array}\right.
\end{eqnarray}
\begin{eqnarray}\label{both1}\nonumber
\sum_{l = -\infty}^{\infty}|c_{j-l}||c_{k-l}| &\stackrel{n',n,\delta}{\ll}&
\left\{ \begin{array}{ll} 1 + \sum_{\substack{l=-\infty\\l\neq j}}^{\infty}|j-l|^{-2\sigma},\quad &j=k\\ \sum_{\substack{l=-\infty\\l\not\in \{j,k\}}}^{\infty} |j-l|^{-\sigma} |k-l|^{-\sigma}+|j-k|^{-\sigma},\quad &j\neq k\end{array}\right.\\
&\stackrel{n',n,\delta}{\ll}&
\left\{ \begin{array}{ll} 1,\quad &j=k\\ |j-k|^{1-2\sigma},\quad &j\neq k\end{array}\right.
\end{eqnarray}
Using (\ref{vassump},\ref{main2}-\ref{both1}), the summability of $|c_{l}^2|$ over integers, and recalling that $V_3=V_4$, we get that\\
\begin{eqnarray}\label{main3}\nonumber
\!\!\!\!\!&&\!\!\!\!
S(V_1,\hdots,V_6,\nu)
\\ \nonumber
\!\!\!\!\!&\stackrel{n',n,\delta}{\ll} &\!\!\!\!
\rho_{\left|V_2\right|,\hdots,\left|V_6\right|}\sum\limits_{k=n'+1}^{n}\!\!\left(\!1\!+\!\!\sum_{\substack{j=n'+1\\j\neq k}}^{n}\!|j-k|^{-2\sigma \left|V_5\right|} |j-k|^{-(\left|V_3\right|+\left|V_4\right|)\sigma} |j-k|^{(1-2\sigma)\left|V_2\right|}\!\right)
\\
\!\!\!\!\!&\stackrel{n',n,\delta}{\ll} &\!\!\!\!
\rho_{\left|V_2\right|,\hdots,\left|V_6\right|}\sum\limits_{k=n'+1}^{n}\!\!\left(\!1+\!\!\sum_{\substack{j=n'+1\\j\neq k}}^{n}\! |j-k|^{\left|V_2\right|-2(\left|V_2\right|+\left|V_3\right|+\left|V_5\right|)\sigma}\right).
\end{eqnarray}
(\ref{main3}) provides a bound for $\ S(V_1,\hdots,V_6,\nu)\ $ in terms of the cardinalities $\ \left|V_2\right|,\hdots\left|V_6\right|$. However, depending on the given partition $\lambda_q=(a_1,a_2,\hdots,a_q)$, the value of $v$ can be different, thus putting constraints on $V_2,\hdots,V_6$. We shall use (\ref{YS}) and (\ref{main3}) to bound the second moment of $Y_{n',n,\delta}^{\lambda_q}$.\\\\
\textbf{Case 1:} $a_q\geq 2$.\\
In this case, we see that $a_r\geq 2,\ \forall\ 1\leq r\leq q$, i.e. none of the $\psi$'s are zero-mean. Thus, Definition 4 gives us that $\left|V_2\right|=\left|V_3\right|=0$. Also from (\ref{vassump}), $\left|V_3\right|=\left|V_4\right|$ gives us that $\left|V_4\right|=0$. If further, $\left|V_5\right|=0$, then we will have $\left|V_6\right|=q$ (since $\left|V_2\right|+\hdots+\left|V_6\right|=q$). So by Lemma 1, we see that
$$\left|E\left(\prod_{r=1}^{q}(\psi_{l_r}^{(r)}\psi_{m_r}^{(r)})\right) - E\left(\prod_{r=1}^{q}\psi_{l_r}^{(r)}\right)E\left(\prod_{r=1}^{q}\psi_{m_r}^{(r)}\right)\right|=0$$
and hence $S(V_1,\hdots,V_6,\nu)=0$. Since we need not consider cases where\\$S(V_1,\hdots,V_6,\nu)=0$, we assume that $\left|V_5\right|\geq 1$. Thus, we have $\left|V_2\right|=\left|V_3\right|=0,\ \left|V_5\right|\geq 1$, and get that $\rho_{0,0,0,\left|V_5\right|,\left|V_6\right|}=\delta$ (from (\ref{rhobound})), and that $\left|V_2\right|-2(\left|V_2\right|+\left|V_3\right|+\left|V_5\right|)\sigma < -1$ (since $\sigma\in\left(\frac12,1\right)$). From (\ref{YS},\ref{main3}), we get that
\begin{eqnarray} \nonumber\label{case1}
E\left[(Y_{n',n,\delta}^{\lambda_q})^2\right]&\!\!\!\stackrel{n',n,\delta}{\ll}&\!\!\!\max_{\left|V_5\right|\geq 1,\ \left|V_6\right|}\rho_{0,0,0,\left|V_5\right|,\left|V_6\right|}\sum\limits_{k=n'+1}^{n}\left(1\ +\sum_{\substack{j=n'+1\\j\neq k}}^{n} |j-k|^{-2\left|V_5\right|\sigma}\right)
\\
&=&\!\!\!\delta\ (n-n')\ .
\end{eqnarray}
\textbf{Case 2:} $a_1=1$.\\
In this case, we see that $a_r= 1,\ \forall\ 1\leq r\leq q$, i.e. all the $\psi$'s are zero-mean. Thus, Definition 4 gives us that $\left|V_4\right|=\left|V_5\right|=\left|V_6\right|=0$. Also from (\ref{vassump}), $\left|V_3\right|=\left|V_4\right|$ gives us that $\left|V_3\right|=0$ and $\left|V_2\right|=q$. Since $a_1+a_2+\hdots+a_q=s$, and $a_r=1$ for each $r$, we have $q=s$ and hence, $\left|V_2\right|=s$. Thus, we have $\left|V_3\right|=\left|V_5\right|=0,\ \left|V_2\right|=s$, and get that $\rho_{s,0,0,0,0}=\delta$ (from (\ref{rhobound})), and that $\left|V_2\right|-2(\left|V_2\right|+\left|V_3\right|+\left|V_5\right|)\sigma = (1-2\sigma)s$. From (\ref{YS},\ref{main3}), we get that
\begin{eqnarray}\label{case2}\nonumber
E\left[(Y_{n',n,\delta}^{\lambda_q})^2\right] &\stackrel{n',n,\delta}{\ll}& \rho_{s,0,0,0,0}\sum\limits _{k=n'+1}^{n}\left(1\ \ +\ \sum_{\substack{j=n'+1\\j\neq k}}^{n} |j-k|^{(1-2\sigma)s}\right)
\\
&\stackrel{n',n,\delta}{\ll}&
\delta\ (n-n')\ \ l_{s,\sigma}(n-n')\ ,
\end{eqnarray}
where $l_{s,\sigma}$ is from the Notation list in Subsection 1.2.\\\\
\textbf{Case 3:} $a_1\geq 2,\ a_q=1$.\\
In this case, we have at least one, but not all zero-mean $\psi$'s. Notice from (\ref{vassump}) and Definition 3, that $0<\left|V_2\right|+\left|V_3\right|<q$ and $0<\left|V_4\right|+\left|V_5\right|+\left|V_6\right|<q$. First, assume that $\left|V_3\right|=\left|V_5\right|=0$. Since from (\ref{vassump}), we have $\left|V_3\right|=\left|V_4\right|$, thus we get that $\left|V_4\right|=0$, and $\ \left|V_2\right|,\left|V_6\right|\in\{1,2,\hdots,q-1\}$. So we have $\rho_{\left|V_2\right|,0,0,0,\left|V_6\right|}=1$ (from (\ref{rhobound})), and that $\left|V_2\right|-2(\left|V_2\right|+\left|V_3\right|+\left|V_5\right|)\sigma = (1-2\sigma)\left|V_2\right|$, hence using (\ref{YS},\ref{main3}), we get that
\small{\begin{eqnarray}\label{case31}\nonumber
	E\left[(Y_{n',n,\delta}^{\lambda_q})^2\right]\!\!
	&\stackrel{n',n,\delta}{\ll}&\!\! \max_{\left|V_2\right|,\left|V_6\right|\in\{1,2,\hdots,q-1\}}\rho_{\left|V_2\right|,0,0,0,\left|V_6\right|}\! \sum\limits_{k=n'+1}^{n}\!\!\left(1 +\!\sum_{\substack{j=n'+1\\j\neq k}}^{n} |j-k|^{(1-2\sigma)\left|V_2\right|}\!\right)
	\\
	&\stackrel{n',n,\delta}{\ll}&\!\!
	\max_{\left|V_2\right|\in\{1,2,\hdots,q-1\}}\ \ (n-n')\ \ l_{\left|V_2\right|,\sigma}(n-n')\ .
	\end{eqnarray}}
\normalsize \!\!Note that $ l_{\left|V_2\right|,\sigma}(n-n')=\left\{\begin{array}{ll}
(n-n')^{\left|V_2\right|(1-2\sigma)+1}\!,\ &\sigma <\frac{\left|V_2\right|+1}{2\left|V_2\right|}\\
\log(n-n'+1),\ &\sigma =\frac{\left|V_2\right|+1}{2\left|V_2\right|}\\
1,\ &\sigma >\frac{\left|V_2\right|+1}{2\left|V_2\right|}
\end{array}\right.$, from Notation List in Subsection 1.2. Since $(1-2\sigma)<0$, $\ n-o\geq 1$, and $\frac{\left|V_2\right|+1}{2\left|V_2\right|}$ decreases as $\left|V_2\right|$ increases, observe that $l_{\left|V_2\right|,\sigma}(n-n')$ is a non-increasing function of $\left|V_2\right|\in\{1,2,\hdots,v\}$. Thus, we take $\left|V_2\right|=1$ in (\ref{case31}) to bound the left hand side, and get
\begin{eqnarray}\label{case31.5}
E\left[(Y_{n',n,\delta}^{\lambda_q})^2\right]
&\stackrel{n',n,\delta}{\ll}&
(n-n')\ \ l_{1,\sigma}(n-n')\ .
\end{eqnarray}
For all other values of $\left|V_3\right|$ and $\left|V_5\right|$, we have $\left|V_2\right|-2(\left|V_2\right|+\left|V_3\right|+\left|V_5\right|)\sigma<-1$ (since $\sigma\in\left(\frac12,1\right)$), and $\rho_{\left|V_2\right|,\hdots,\left|V_6\right|}\leq \delta$ (from (\ref{rhobound})). Thus, we get from (\ref{YS},\ref{main3}), that
\small{\begin{eqnarray}\label{case32}\nonumber
	E\left[(Y_{n',n,\delta}^{\lambda_q})^2\right]\!\!\!\!
	&\stackrel{n',n,\delta}{\ll}&\!\!\!\!\!
	\max_{\left|V_2\right|,\hdots,\left|V_6\right|}\  \rho_{\left|V_2\right|,\hdots,\left|V_6\right|}\!\sum_{k=n'+1}^{n}\!\!\left(\!1\! +\!\!\!\sum_{\substack{j=n'+1\\j\neq k}}^{n}\! |j-k|^{\left|V_2\right|-2(\left|V_2\right|+\left|V_3\right|+\left|V_5\right|)\sigma}\!\right)
	\\
	&\stackrel{n',n,\delta}{\ll}&\!\!\!\!\!
	\ \delta\ (n-n')\ .
	\end{eqnarray}}\normalsize
\textbf{Extra Case:} $a_1\geq 2,\ a_q=1$, $s$ is even, and $E\left(\psi_1^{(r)}\right) = 0\ $whenever $a_r$ is odd.\\
Under these new conditions, we will show that it is possible to tighten the bound for $E\left[(Y_{n',n,\delta}^{\lambda_q})^2\right]$ in (\ref{case31}). We had previously taken $\left|V_2\right|=1$ to bound $E\left[(Y_{n',n,\delta}^{\lambda_q})^2\right]$ in (\ref{case31}) of Case 3, under the assumption that $\left|V_3\right|=\left|V_4\right|=\left|V_5\right|=0\ $ and $\left|V_2\right|,\left|V_6\right|\in\{1,2,\hdots,q-1\}$. Further, when $\left|V_2\right|=1$, it means that $\psi_{l_q}^{(q)}$ and $\psi_{m_q}^{(q)}$ are the only two $\psi$'s with zero mean, and that they must be matched. This gives us that $\nu(q)=q,\ \left|V_6\right|=q-1\ $ and that $V_1\cup V_2\cup V_3\neq\phi$. So, we apply (3.6) from the paper, and the independence of $\psi$'s with different subscripts, to the definition of $S(V_1,\hdots,V_6,\nu)$ in (\ref{Sdef}), and get that
\begin{eqnarray}\nonumber\label{cov}
&&
\left|E\left(\prod_{r=1}^{q}\psi_{l_r}^{(r)}\psi_{m_r}^{(r)}\!\right) - E\left(\prod_{r=1}^{q}\psi_{l_r}^{(r)}\!\right)E\left(\prod_{r=1}^{q}\psi_{m_r}^{(r)}\!\right)\right|\textbf{1}_{\{\left|V_2\right|=1,\ \left|V_6\right|=q-1\}}
\\ \nonumber
&= &
\left|E\left(\prod_{r=1}^{q}\psi_{l_r}^{(r)}\psi_{m_r}^{(r)}\right)\right|\textbf{1}_{\{\left|V_2\right|=1,\ \left|V_6\right|=q-1\}}
\\
&= &
\left|\prod_{r=1}^{q-1}E\left(\psi_{l_r}^{(r)}\right)\right|\left|\prod_{r=1}^{q-1}E\left(\psi_{m_r}^{(r)}\right)\right|E\left[\left(\psi_{l_q}^{(q)}\right)^2\right]\ .
\end{eqnarray}
Observe that $(a_1,a_2,\hdots,a_{q-1})$ is a decreasing partition of $(s-1)$, since $a_q = 1$. Hence if $s$ is even, then $a_r$ must be odd for some $1\leq r\leq q-1$, and for that $r$, we will get $E\left(\psi_{l_r}^{(r)}\right) = 0$. This makes the entire expression in (\ref{cov}) become $0$ (thus making $S(V_1,\hdots,V_6,\nu)=0$), so we must not choose $\left|V_2\right|=1\ $ for the bound of $E\left[\left(Y_{n',n,\delta}^{\lambda_q}\right)^2\right]\ $ in (\ref{case31}). Since $l_{\left|V_2\right|,\sigma}(n-n')$ is a non-increasing function of $\left|V_2\right|$, we go with next lowest value, i.e. $\left|V_2\right|=2$, to obtain
\begin{eqnarray}\label{extracase}
E\left[\left(Y_{n',n,\delta}^{\lambda_q}\right)^2\right]\ \ \stackrel{n',n,\delta}{\ll}\ \ (n-n')\ \ l_{2,\sigma}(n-n')\ .
\end{eqnarray}\\
\noindent
Lemma \ref{psilemma} follows from (\ref{case1},\ref{case2},\ref{case31.5},\ref{case32}) and (\ref{extracase}).

\end{proof}

\begin{manuallemma}{3}\label{inequality}
	For $j, k\in\mathbb Z$, $j \neq k$ and $\gamma > \frac12$, we have,
	$$\sum_{\substack{l=-\infty\\l\not\in \{j,k\}}}^{\infty} |j-l|^{-\gamma} |k-l|^{-\gamma} \quad \stackrel{j,k}{\ll} \quad
	\left\{ \begin{array}{ll} |j-k|^{1-2\gamma}, & \gamma\in \left(\frac12,1\right)\\ |j-k|^{-1} \ln(|j-k|+1),\quad & \gamma = 1\\|j-k|^{-\gamma},& \gamma >1\end{array}\right..$$
\end{manuallemma}
\begin{proof}
Take $\gamma\in \left(\frac12,1\right)$.
Without loss of generality, we take $j > k$. 
Using symmetry, integral approximation, and successive substitutions $t=|k-l|\ $ and $\ s=\frac{t}{j-k}$, we get
	\begin{eqnarray}\nonumber\label{gammaless1}
	\!\!\!\!&&\sum_{\substack{l=-\infty\\l\not\in \{j,k\}}}^{\infty} |j-l|^{-\gamma} |k-l|^{-\gamma}
	\\ \nonumber
	\!\!\!\!&\stackrel{j,k}{\ll}&
	\sum_{l=-\infty}^{k-1} (j-l)^{-\gamma}(k-l)^{-\gamma}\ \ +\ \sum_{l=k+1}^{j-1}(j-l)^{-\gamma}(l-k)^{-\gamma}
	\\ \nonumber
	\!\!\!\!&\stackrel{j,k}{\ll}&
	\int_{0}^{\infty}(j-k+t)^{-\gamma}t^{-\gamma}\ dt\ \ +\ \int_{0}^{j-k}\!(j-k-t)^{-\gamma}t^{-\gamma}\ dt
	\\
	\!\!\!\!&\stackrel{j,k}{\ll}&
	(j-k)^{1-2\gamma} \left(\int_{0}^{\infty} (1+s)^{-\gamma}s^{-\gamma}\ ds\ \ +\int_{0}^{1} (1-s)^{-\gamma}s^{-\gamma}\ ds\right).
	\end{eqnarray}
Notice that $\ \int_{0}^{1} (1+s)^{-\gamma}s^{-\gamma}\ ds\ \leq\ \int_{0}^{1} (1-s)^{-\gamma}s^{-\gamma}\ ds = B(1-\gamma,1-\gamma)$, which is the beta function evaluated at $(1-\gamma,1-\gamma)$. 
Hence, we get by (\ref{gammaless1}) that
	\begin{eqnarray}\nonumber\label{gammales1}
	\!\!\!\!&&\sum_{\substack{l=-\infty\\l\not\in \{j,k\}}}^{\infty} |j-l|^{-\gamma} |k-l|^{-\gamma}
	\\ \nonumber
	\!\!\!\!&\stackrel{j,k}{\ll}&
	(j-k)^{1-2\gamma} \left(\int_{1}^{\infty} (1+s)^{-\gamma}s^{-\gamma}\ ds\ +\ 2\int_{0}^{1} (1-s)^{-\gamma}s^{-\gamma}\ ds\right)
	\\
	\!\!\!\!&\stackrel{j,k}{\ll}&
	(j-k)^{1-2\gamma}.
	\end{eqnarray}
	Next, we consider the case where $\gamma = 1$.
	\begin{eqnarray}\nonumber\label{gammais1}
	\!&&\sum_{\substack{l=-\infty\\l\not\in \{j,k\}}}^{\infty} |j-l|^{-1} |k-l|^{-1}
	\\ \nonumber
	\!&\stackrel{j,k}{\ll} &
	\sum_{l=-\infty}^{k-1} (j-l)^{-1} (k-l)^{-1}\ \ +\ \sum_{l=k+1}^{j-1} (j-l)^{-1} (l-k)^{-1}
	\\ \nonumber
	\!&= &
	(j-k)^{-1}\!\left(\sum_{l=-\infty}^{k-1} \left[(k-l)^{-1}-(j-l)^{-1}\right]+\sum_{l=k+1}^{j-1} \left[(l-k)^{-1} + (j-l)^{-1}\right]\right)
	\\
	\!&= &
	(j-k)^{-1}\left(\sum_{l=1}^{j-k} l^{-1}\ +\ 2\!\sum_{l=1}^{j-k-1}l^{-1}\!\right)
	\ \ \stackrel{j,k}{\ll}\ \ (j-k)^{-1} \log(j-k+1).
	\end{eqnarray}
	Finally we consider the case where $\gamma > 1$. Using symmetry, and summability of the sequence $\{|l|^{-\gamma}\}_{l\in\mathbb{Z}}$, we have
	\begin{eqnarray}\label{gammabig1}\nonumber
	\sum_{\substack{l=-\infty\\l\not\in \{j,k\}}}^{\infty} |j-l|^{-\gamma} |k-l|^{-\gamma}\ 
	&\stackrel{j,k}{\ll}&
	\sum_{\substack{l=-\infty\\l\neq k}}^{\left\lfloor \frac{j+k}{2}\right\rfloor }\left(j-l\right)^{-\gamma}\left|k-l\right|^{-\gamma}
	\\ \nonumber
	&\stackrel{j,k}{\ll}&
	\left(j-\left\lfloor \frac{j+k}{2}\right\rfloor\right)^{-\gamma}\ \sum_{\substack{l=-\infty\\l\neq k}}^{\left\lfloor \frac{j+k}{2}\right\rfloor}\left|k-l\right|^{-\gamma}
	\\
	&\stackrel{j,k}{\ll}&
	(j-k)^{-\gamma}
	\end{eqnarray}
	From (\ref{gammales1}, \ref{gammais1}) and (\ref{gammabig1}), the proof of the lemma is complete.\hfill
\end{proof}

\begin{manuallemma}{4}\label{newineq}
	For $j, k\in\mathbb Z$, $j \neq k$ and $\gamma \in \left(\frac12,1\right)$, we have,
	$$\sum_{\substack{l=-\infty\\l\not\in \{j,k\}}}^{\infty} |j-l|^{-\gamma} |k-l|^{-2\gamma} \qquad \stackrel{j,k}{\ll} \qquad |j-k|^{-\gamma}\ .$$
\end{manuallemma}
\begin{proof}
	Without loss of generality, we assume that $j>k$. Then, we have
	\begin{eqnarray}\nonumber
	&&\sum_{\substack{l=-\infty\\l\not\in \{j,k\}}}^{\infty} |j-l|^{-\gamma} |k-l|^{-2\gamma}
	\\ \nonumber
	&\stackrel{j,k}{\ll} &
	\sum_{\substack{l=-\infty\\l\neq k}}^{\left\lfloor \frac{j+k}{2}\right\rfloor }|j-l|^{-\gamma} |k-l|^{-2\gamma}\ \ +\ \sum_{\substack{l=\left\lceil \frac{j+k}{2}\right\rceil\\l\neq j}}^{\infty} |j-l|^{-\gamma} |k-l|^{-2\gamma}
	\\ \nonumber
	&\stackrel{j,k}{\ll}&
	|j-k|^{-\gamma}\sum_{\substack{l=-\infty\\l\not\in \{j,k\}}}^{\infty}|k-l|^{-2\gamma}\ +\  |j-k|^{-\gamma}\sum_{\substack{l=-\infty\\l\not\in \{j,k\}}}^{\infty}|j-l|^{-\gamma} |k-l|^{-\gamma}
	\\
	&\stackrel{j,k}{\ll}&
	|j-k|^{-\gamma},
	\end{eqnarray}
	by Lemma \ref{inequality}. This concludes the proof of the lemma.\hfill
\end{proof}

\begin{manualtheorem}{5}\label{stout}
	Let $\{Z_k\}_{k\in\mathbb{N}}$ be a time series with finite second moments, and $f$ be a super-additive function on $\mathbb{N}$, such that
	\begin{eqnarray}\label{serflingcond}
	E\left[\left(\sum_{i=n'+1}^{n} Z_i \right)^2\right]\ &\leq&\ f(n-{n'})\qquad\!\forall\ n'<n\in\mathbb{N}\cup\{0\}\ .
	\end{eqnarray}
	Then, for $n_{r}=2^{r},\ r\in\mathbb{N}\cup\{0\}$, and $n',n\in\mathbb{N}$, we have
	\begin{eqnarray}\nonumber
	E\left[\max_{n_r\leq n'<n<n_{r+1}}\left(\sum_{i=n'+1}^{n} Z_i\right)^2 \right]\ \ \stackrel{r}\ll\ \ r^2 f(n_r)\ .
	\end{eqnarray}
\end{manualtheorem}

\begin{proof}
    Notice that
    \begin{align}\nonumber\label{ultimatestep}
        E\!\left[\max_{n_r\leq n'<n< n_{r+1}} \left(\sum_{i=n'+1}^{n}Z_i\right)^2\right] &= E\!\left[\max_{n_r\leq n'<n< n_{r+1}}\! \left(\sum_{i=n_r+1}^{n}Z_i -\! \sum_{i=n_r+1}^{n'}Z_i\right)^2\right]\\
        &\leq\ 4 E\!\left[\max_{n_r\leq n< n_{r+1}}\! \left(\sum_{i=n_r+1}^{n}Z_i\right)^2\right]\ .
    \end{align}
    Thus, Theorem \ref{stout} will be proved if we can show that the right hand side of (\ref{ultimatestep}) is upper bounded up to a constant by $r^2 f(n_r)$.
    In the setting of Stout [33, Theorem 2.4.1], taking $\nu=2$, $\{X_i\}_{i\in\mathbb{N}} = \{Z_i\}_{i\in\mathbb{N}}$, and $g(F_{a,k})=f(k)$, which is a super-additive function, we see that $g(F_{a,k})+g(F_{a+k,m})\leq g(F_{a,k+m})$, i.e. the condition [33, (2.4.1)] is satisfied. Thus, for $n_{r}=2^{r}$ where $r\in\mathbb{N}\cup\{0\}$, we have
    \begin{align}\nonumber\label{M}
        E\left[\max_{n_r\leq n< n_{r+1}} \left(\sum_{i=n_r+1}^{n}Z_i\right)^2\right] &\leq  E\left[\max_{n_r<k\leq n_{r+1}} \left(\sum_{i=n_r+1}^{k}Z_i\right)^2\right]\\\nonumber
        &\leq\ \left(\frac{\log(2 n_r)}{\log(2)}\right)^2 g(F_{n_r,n_r})\\
        &\stackrel{r}{\ll}\ r^2 f(n_r)\ .
    \end{align}
    Theorem \ref{stout} follows from (\ref{M}) and (\ref{ultimatestep}).
\end{proof}

\bibliographystyle{unsrt}      
\bibliography{MSLLNprod_Arxiv}   

\begin{thebibliography}{10}

\bibitem{relevance}
M.~Grossglauser and J.~C. Bolot.
\newblock On the relevance of long-range dependence in network traffic.
\newblock {\em IEEE/ACM Trans. Netw.}, 7:629--640, 1999.

\bibitem{runoff}
E.~Koscielny‐Bunde, J.~W. Kantelhardt, P.~Braun, A.~Bunde, and S.~Havlin.
\newblock Long‐term persistence and multifractality of river runoff records:
  Detrended fluctuation studies.
\newblock {\em J. Hydrol.}, 322:120--137, 2006.

\bibitem{patagonia}
M.M. Rodrigo and A.Q. Renato.
\newblock Long-range dependence in the runoff time series of the most important
  patagonian river draining to the pacific ocean.
\newblock {\em New Zeal. J. Mar. Fresh. Res.}, 52:264--283, 2017.

\bibitem{stockcrash}
D.~Sornette.
\newblock {\em Why Stock Markets Crash: Critical Events in Complex Financial
  Systems}.
\newblock Princeton University Press, 2009.

\bibitem{KOSA}
M.A. Kouritzin and S.~Sadeghi.
\newblock Marcinkiewicz law of large numbers for outer products of
  heavy-tailed, long-range dependent data.
\newblock {\em Adv. Appl. Probab.}, 48:349--368, 2016.

\bibitem{louhichi}
S.~Louhichi and P.~Soulier.
\newblock Marcinkiewicz–zygmund strong laws for infinite variance time
  series.
\newblock {\em Statistical Inference for Stochastic Processes}, 3:31--40, 2000.

\bibitem{Rio}
E.~Rio.
\newblock A maximal inequality and dependent marcinkiewicz-zygmund strong laws.
\newblock {\em Ann. Probab.}, 23:918--937, 1995.

\bibitem{chistyakov}
V.~P. Chistyakov.
\newblock A theorem on sums of independent positive random variables and its
  applications to branching random processes.
\newblock {\em Theory Probab. its Appl.}, 9:640--648, 1964.

\bibitem{Teugels}
J.~L. Teugels.
\newblock The class of subexponential distributions.
\newblock {\em Ann. Probab.}, 3:1000--1011, 1975.

\bibitem{Mand60}
B.~Mandelbrot.
\newblock The pareto-lévy law and the distribution of income.
\newblock {\em Int. Econ. Rev.}, 1:79--106, 1960.

\bibitem{Mand61}
B.~Mandelbrot.
\newblock Stable paretian random functions and the multiplicative variation of
  income.
\newblock {\em Econometrica}, 29:517--543, 1961.

\bibitem{kuliksoulier}
R.~Kulik and P.~Soulier.
\newblock {\em Heavy-Tailed Time Series}.
\newblock Springer-Verlag New York, 2020.

\bibitem{Hurst}
H.E. Hurst.
\newblock The problem of long-term storage in reservoirs.
\newblock {\em Hydrol. Sci. J.}, 1:13--27, 1956.

\bibitem{otherhurst}
H.E. Hurst.
\newblock Methods of using long-term storage in reservoirs.
\newblock {\em Proceedings of the Institution of Civil Engineers}, 5:519--543,
  1956.

\bibitem{noahjoseph}
B.~Mandelbrot and J.~R. Wallis.
\newblock Noah, joseph and operational hydrology.
\newblock {\em Water Resour. Res.}, 4:909--918, 1968.

\bibitem{internet}
T.~Karagiannis, M.~Molle, and M.~Faloutsos.
\newblock Long-range dependence ten years of internet traffic modeling.
\newblock {\em IEEE Internet Comput.}, 8:57--64, 2004.

\bibitem{mingliu}
M.~Liu.
\newblock Modeling long memory in stock market volatility.
\newblock {\em J. Econom.}, 99:139--171, 2000.

\bibitem{history}
G.~Graves, R.~Gramacy, N.~Watkins, and C.~Franzke.
\newblock A brief history of long memory: Hurst, mandelbrot and the road to
  arfima, 1951–1980.
\newblock {\em Entropy}, 19:437, 2017.

\bibitem{Hosking}
J.~R.~M. Hosking.
\newblock Fractional differencing.
\newblock {\em Biometrika}, 68:165--176, 1981.

\bibitem{taqqubook}
V.~Pipiras and M.~Taqqu.
\newblock {\em Long-Range Dependence and Self-Similarity}.
\newblock Cambridge University Press, 2017.

\bibitem{Davisresnick}
R.~Davis and S.~Resnick.
\newblock Limit theory for moving averages of random variables with regularly
  varying tail probabilities.
\newblock {\em Ann. Probab.}, 13:179--195, 1985.

\bibitem{crosscov}
M.A. Kouritzin.
\newblock Strong approximation for cross-covariances of linear variables with
  long-range dependence.
\newblock {\em Stoch. Process. Their Appl.}, 60:343--353, 1995.

\bibitem{Wudepinnov}
W.~B. Wu and W.~Min.
\newblock On linear processes with dependent innovations.
\newblock {\em Stoch. Process. Their Appl.}, 115:939--958, 2005.

\bibitem{covlongmem}
W.~B. Wu, Y.~Huang, and W.~Zheng.
\newblock Covariances estimation for long-memory processes.
\newblock {\em Adv. Appl. Probab.}, 42:137--157, 2010.

\bibitem{dangistas}
T.~T.~N. Dang and J.~Istas.
\newblock Estimation of the hurst and the stability indices of a h-self-similar
  stable process.
\newblock {\em Electron. J. Stat.}, 11:4103–4150, 2017.

\bibitem{kosadecoup}
M.A. Kouritzin and S.~Sadeghi.
\newblock Convergence rates and decoupling in linear stochastic approximation
  algorithms.
\newblock {\em SIAM J. Control Optim.}, 53:1484--1508, 2015.

\bibitem{interrelation}
M.A. Kouritzin.
\newblock On the interrelation of almost sure invariance principles for certain
  stochastic adaptive algorithms and for partial sums of random variables.
\newblock {\em J. Theor. Probab.}, 9:811--840, 1996.

\bibitem{Shiryaev}
A.~N. Shiryaev.
\newblock {\em Probability}.
\newblock Springer, 1996.

\bibitem{Samorod}
G.~Samorodnitsky.
\newblock {\em Stochastic Processes and Long Range Dependence}.
\newblock Springer, 2016.

\bibitem{baitaqqu}
S.~Bai and M.~S. Taqqu.
\newblock Convergence of long-memory discrete kth order volterra processes.
\newblock {\em Stoch. Process. Their Appl.}, 125:2026--2053, 2015.

\bibitem{peccatitaqqu}
G.~Peccati and M.~S. Taqqu.
\newblock {\em Combinatorial Expressions of Cumulants and Moments}.
\newblock Springer, Milan, 2011.

\bibitem{Surgzones}
D.~Surgailis.
\newblock Zones of attraction of self-similar multiple integrals.
\newblock {\em Lith. Math. J.}, 22:327–340, 1982.

\bibitem{Surglrdappell}
D.~Surgailis.
\newblock Long-range dependence and appell rank.
\newblock {\em Ann. Probab.}, 28:478--497, 2000.

\bibitem{Surgemp}
D.~Surgailis.
\newblock Stable limits of empirical processes of moving averages with infnite
  variance.
\newblock {\em Stoch. Process. Their Appl.}, 100:255–274, 2002.

\bibitem{Surgstable}
D.~Surgailis.
\newblock Stable limits of sums of bounded functions of long-memory moving
  averages with finite variance.
\newblock {\em Bernoulli}, 10:327--355, 2004.

\bibitem{breurmajor}
P.~Breuer and P.~Major.
\newblock Central limit theorems for non-linear functionals of gaussian random
  fields.
\newblock {\em J. Multivariate Anal.}, 13:425--441, 1983.

\bibitem{dobrushinmajor}
R.~L. Dobrushin and P.~Major.
\newblock Non-central limit theorems for non-linear functions of gaussian
  fields.
\newblock {\em Z. Wahrscheinlichkeitstheorie Verw. Geb.}, 50:27--52, 1979.

\bibitem{taqqu}
M.~S. Taqqu.
\newblock Convergence of integrated processes of arbitrary hermite rank.
\newblock {\em Z. Wahrscheinlichkeitstheorie Verw. Geb.}, 50:53–83, 1979.

\bibitem{avramtaqqu}
F.~Avram and M.~S. Taqqu.
\newblock Noncentral limit theorems and appell polynomials.
\newblock {\em Ann. Probab.}, 15:767--775, 1987.

\bibitem{vaiciulis}
M.~Vaičiulis.
\newblock Convergence of sums of appell polynomials with infinite variance.
\newblock {\em Lithuanian Math. J.}, 43:80--98, 2003.

\bibitem{Loeve}
M.~Loeve.
\newblock {\em Probability Theory I}.
\newblock Springer-Verlag New York, 1977.

\bibitem{Stout}
W.~F. Stout.
\newblock {\em Almost Sure Convergence}.
\newblock Academic Press, 1974.

\end{thebibliography}

\end{document}